\documentclass[11pt]{article}
\usepackage[a4paper,margin=1in]{geometry}
\usepackage[T1]{fontenc}
\usepackage{lmodern}
\usepackage{microtype}
\usepackage{amsmath,amssymb,amsthm,mathtools}
\usepackage{enumitem}
\usepackage{booktabs}
\usepackage{array}
\usepackage{hyperref}
\usepackage[nameinlink,capitalise]{cleveref}
\usepackage{xcolor}
\usepackage{fancyhdr}
\usepackage{tikz}

\hypersetup{
  colorlinks=true,
  linkcolor=blue!55!black,
  citecolor=blue!55!black,
  urlcolor=blue!55!black,
  pdftitle={A Chain- and Diagram-Level Semantics for Morphological Calculus},
  pdfauthor={Baruch Schneider, Diana Schneiderova (Barseghyan), Yifan Zhang}
}

\newtheorem{theorem}{Theorem}[section]
\newtheorem{proposition}[theorem]{Proposition}
\newtheorem{corollary}[theorem]{Corollary}

\theoremstyle{definition}
\newtheorem{definition}[theorem]{Definition}
\newtheorem{example}[theorem]{Example}
\theoremstyle{remark}
\newtheorem{remark}[theorem]{Remark}
\crefname{theorem}{theorem}{theorems}
\Crefname{theorem}{Theorem}{Theorems}
\crefname{proposition}{proposition}{propositions}
\Crefname{proposition}{Proposition}{Propositions}
\crefname{corollary}{corollary}{corollaries}
\Crefname{corollary}{Corollary}{Corollaries}
\crefname{lemma}{lemma}{lemmas}
\Crefname{lemma}{Lemma}{Lemmas}
\crefname{definition}{definition}{definitions}
\Crefname{definition}{Definition}{Definitions}
\crefname{example}{example}{examples}
\Crefname{example}{Example}{Examples}
\crefname{remark}{remark}{remarks}
\Crefname{remark}{Remark}{Remarks}
\crefname{section}{section}{sections}
\Crefname{section}{Section}{Sections}

\newcommand{\kk}{\Bbbk}
\newcommand{\Z}{\mathbb Z}
\newcommand{\N}{\mathbb N}
\newcommand{\R}{\mathbb R}
\newcommand{\Mpol}{\mathcal M}
\newcommand{\Ppol}{\mathcal P}
\newcommand{\Bpol}{\mathcal B}
\newcommand{\Spol}{\mathcal S}
\newcommand{\Fpol}{\mathcal F}
\newcommand{\Dpol}{\Delta}
\newcommand{\rank}{\operatorname{rank}}
\newcommand{\im}{\operatorname{im}}
\newcommand{\id}{\operatorname{id}}
\newcommand{\TScript}{\mathbf{TScript}}
\newcommand{\Ch}{\mathbf{Ch}}
\newcommand{\vol}{\operatorname{vol}}
\newcommand{\BM}{\mathrm{BM}}

\title{\textbf{A Chain- and Diagram-Level Semantics for Morphological Calculus}\\[0.4em]
\large Refinement, monodromy, and bivector orbit decompositions}
\author{%
\normalsize Baruch Schneider${}^{1,*}$,\quad
Diana Schneiderov\'a (Barseghyan)${}^{1}$,\quad
Yifan Zhang${}^{1,2,3}$\\[0.6em]
\footnotesize ${}^{1}$Department of Mathematics, University of Ostrava, 702 00 Ostrava, Czechia\\
\footnotesize ${}^{2}$Department of Algebra, Charles University, 186 75 Prague, Czechia\\
\footnotesize ${}^{3}$Department of Applied Mathematics, VSB--Technical University of Ostrava, 708 00 Ostrava, Czechia\\[0.35em]
\footnotesize ${}^{*}$Corresponding author\\
\footnotesize E-mail addresses: \href{mailto:baruch.schneider@osu.cz}{baruch.schneider@osu.cz},
\href{mailto:diana.schneiderova@osu.cz}{diana.schneiderova@osu.cz},
\href{mailto:yifan.zhang@osu.cz}{yifan.zhang@osu.cz}
}
\date{}

\begin{document}
\maketitle

\begin{abstract}
Morphological calculus represents decompositions of geometric objects by polynomial-like expressions in a symbol for the real line. Sommen's examples reveal two basic difficulties: a single space may admit several such expressions, and scalar addition and multiplication suppress the incidence and attachment maps needed to reconstruct the space. We formulate a finite semantics in terms of script chain complexes. The cell-count polynomial satisfies
\[
 \mathcal M_S(t)=\mathcal P_{S,\Bbbk}(t)+(1+t)\mathcal B_{S,\Bbbk}(t),
\]
where \(\mathcal P\) is the Poincar\'e polynomial and \(\mathcal B\) records boundary ranks. Over \(\mathbb Z\), Smith labels distinguish unit pairs---which model refinement overhead in the explicit subdivisions treated here---from non-unit pairs carrying torsion. We separate Cartesian products from bundles, derive the exact monodromy defect for mapping tori, and replace scalar gluing by a finite bar construction for diagrams of script complexes. Mapping cones, joins, and double mapping cylinders appear as reduced models of this diagrammatic semantics.

We apply this construction to the bivector discrepancies left open in Sommen's calculations, using finite CW models after unit-sphere normalization. In dimension four, Hodge decomposition identifies the unit bivector sphere with \(S^2*S^2\); the angular excess in Sommen's calculation is the contractible relative complex produced by inserting the rank-two midpoint. In dimension five, the \(SO(5)\)-action on \(S(\Lambda^2\mathbb R^5)=S^9\) has principal orbit \(SO(5)/T^2\) and singular orbits \(\widetilde G_2(\mathbb R^5)\) and \(\mathbb{CP}^3\). For standard Bruhat cell structures, the reduced double-mapping-cylinder inventory differs from the sphere homology by seven unit-labelled pairs. We also treat Borel--Moore realizations of selected noncompact symbols, define division as an action-certified partial operation rather than algebraic inversion, and describe the orbit-type face diagram for unit bivectors in dimension six. In dimension six the resulting correction polynomial has nonnegative coefficients, while compatibility of the face attachments remains open. The examples distinguish subdivision and attachment data from quotient actions and support conditions, all of which are suppressed by scalar morphological notation.
\end{abstract}

\medskip
\noindent\textbf{2020 Mathematics Subject Classification.} 55U15, 57S15, 15A75.\\
\textbf{Keywords.} Morphological calculus; script geometry; cellular chain complex; homotopy colimit; cohomogeneity one; bivector.

\section{Introduction}
\label{sec:intro}

Sommen's morphological calculus treats geometric objects as generalized natural numbers. Disjoint decompositions are written additively, Cartesian products or fibre bundles multiplicatively, and quotient constructions by division. This language produces compact expressions for spheres, projective spaces, Lie groups, Grassmannians, null cones, and twistor spaces \cite{Sommen1999,Sommen2016}. The same papers stress that ``morphological quantity'' has no independent mathematical definition: one object may receive several expressions, different objects may share an expression, and some stratified calculations fail to reproduce the expected ambient quantity.

Script geometry provides a finite setting in which these failures can be analyzed. A script is a chain complex of free abelian groups generated by cells, with boundary operator \(\partial^2=0\) \cite{CerejeirasEtAl2016,PrimerScript2019,CerejeirasKahlerLegatiuk2025}. The cell counts retain the scalar inventory of morphological calculus, whereas the differential retains incidence and attachment information. The resulting loss of information can therefore be measured at chain level. The basic identity
\[
 \sum_k c_k t^k=\sum_k\beta_k t^k+(1+t)Q(t),\qquad Q(t)\in\mathbb N[t],
\]
has the form of the strong polynomial Morse inequalities. For script complexes, the coefficient of \(t^k\) in \(Q\) is exactly the rank of \(\partial_{k+1}\). Integral Smith normal form then separates unit pairs, which are removable algebraic refinement overhead, from non-unit pairs, which encode torsion.

Ambiguous multiplication and gluing are replaced by constructions that retain the relevant maps. Cartesian products are modeled by tensor products of chain complexes. A fibre bundle requires monodromy and spectral-sequence data; for mapping tori, the mapping-cone exact sequence gives an exact correction to the naive product polynomial. More generally, a finite direct diagram of script complexes has a bar realization modeling its homotopy colimit. The same construction specializes to mapping cones, joins, and double mapping cylinders.

The main applications concern the unresolved bivector ``bills'' in \cite{Sommen2016}. After radial normalization, the four-dimensional self-dual and anti-self-dual decomposition turns the unit bivector sphere into the join \(S^2*S^2\). Sommen's rank-two case inserts a midpoint into the join coordinate, so its angular excess is a unit-labelled subdivision complex. In five dimensions, the normalized $SO(5)$-action has cohomogeneity one: the rank-two and equal-singular-value loci are the two singular endpoint orbits, and the principal orbit is the full flag manifold \(SO(5)/T^2\). The required finite datum is the span of the two homogeneous projections; its mapping cone has the homotopy type of \(S^9\).

The algebraic ingredients used below are classical. Their role here is to provide a finite semantics for morphological calculus and to resolve specific ambiguities in Sommen's examples. The main results are the chain-level defect formula and its integral Smith refinement, a diagrammatic treatment of gluing that retains the attaching maps, the identification of Sommen's four-dimensional bivector excess with midpoint subdivision of the join $S^2*S^2$, and the five-dimensional cohomogeneity-one reconstruction by a double mapping cylinder. We also consider Borel--Moore models for selected open strata, action-certified quotients, and the first higher-rank example in dimension six. No complete invariant of topological spaces or general constructible theory is asserted.

The construction should also be distinguished from existing additive cut-and-paste frameworks. Classical scissors-congruence and cut-and-paste theories impose relations generated by decomposing and regluing manifolds; modern refinements recover the corresponding $SK$ groups and lift additive invariants such as Euler characteristic to algebraic $K$-theoretic objects \cite{HoekzemaEtAl2022}. In algebraic geometry, Grothendieck groups of varieties similarly encode cut-and-paste relations, and Bittner's presentation describes the universal Euler characteristic with compact support in characteristic zero \cite{Bittner2004}. These theories identify decompositions through scissor relations. The construction used here retains boundary operators, attaching maps, monodromy, and quotient-action data before any passage to an additive class. It is therefore not a new universal cut-and-paste invariant, but a chain- and diagram-level record of data suppressed by a scalar morphological bill.

\section{Scripts and finite morphological quantities}
\label{sec:scripts}

We use only the chain-complex part of script geometry. In the terminology of \cite{CerejeirasEtAl2016,CerejeirasKahlerLegatiuk2025}, a script consists of free modules generated by cells, including an accumulator module in degree $-1$, together with a boundary operator $\partial$ satisfying $\partial^2=0$. Tight scripts and geomaps form a category $\TScript$ \cite{CerejeirasKahlerLegatiuk2025}. The algebraic constructions below apply to every finite script.

\begin{definition}[Reduced script complex]
Let $S$ be a finite script. Discarding the accumulator degree gives
\[
  C(S):\quad 0\longleftarrow C_0(S)\xleftarrow{\partial_1}C_1(S)
  \xleftarrow{\partial_2}\cdots\xleftarrow{\partial_n}C_n(S)\longleftarrow 0,
\]
where $C_k(S)$ is freely generated by the $k$-cells. For a field $\kk$, put $C(S;\kk)=C(S)\otimes_\Z\kk$.
\end{definition}

\begin{definition}[Three polynomials]
For a finite script $S$ and field $\kk$, set
\[
 c_k=\dim_\kk C_k(S;\kk),\qquad
 \beta_k=\dim_\kk H_k(C(S;\kk)),\qquad
 r_k=\rank_\kk\partial_k,
\]
with $r_0=0$. Define
\begin{align*}
 \Mpol_S(t)&=\sum_{k\ge0}c_kt^k,\\
 \Ppol_{S,\kk}(t)&=\sum_{k\ge0}\beta_kt^k,\\
 \Bpol_{S,\kk}(t)&=\sum_{k\ge0}r_{k+1}t^k.
\end{align*}
\end{definition}

We use the same notation for bounded finite free chain complexes. In that setting $\Mpol_C$ uses the integral ranks of the chain groups, while $\Ppol_{C,\kk}$ and $\Bpol_{C,\kk}$ are computed after base change to the indicated field.

The cell-count polynomial is a finite analogue of a morphological expression $a_0R^n+\cdots+a_n$ when coefficients count open cells by dimension.

\begin{proposition}[Addition and Cartesian multiplication]
\label{prop:semiring}
For finite scripts $S,T$ and a field $\kk$,
\begin{align*}
 \Mpol_{S\oplus T}&=\Mpol_S+\Mpol_T,&
 \Ppol_{S\oplus T}&=\Ppol_S+\Ppol_T,\\
 \Mpol_{S\otimes T}&=\Mpol_S\Mpol_T,&
 \Ppol_{S\otimes T}&=\Ppol_S\Ppol_T.
\end{align*}
Here tensor product refers to the total tensor product of reduced chain complexes.
\end{proposition}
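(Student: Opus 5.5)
The plan is to argue degreewise, reducing every claim to statements about finite-dimensional vector spaces and finite-rank free modules. The additive statements come first. The direct sum $S\oplus T$ has chain groups $C_k(S)\oplus C_k(T)$, and its boundary is the block-diagonal map $\partial^S\oplus\partial^T$. Ranks of free modules add, so $c_k(S\oplus T)=c_k(S)+c_k(T)$, which gives $\Mpol_{S\oplus T}=\Mpol_S+\Mpol_T$. After base change to $\kk$, kernels and images of a block-diagonal map split as direct sums. Hence $H_k(C(S\oplus T;\kk))\cong H_k(C(S;\kk))\oplus H_k(C(T;\kk))$, and the Betti numbers add. As a by-product, the boundary ranks also add, so $\Bpol$ is additive too, though the statement does not ask for this.

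For the multiplicative statements we use the total tensor product $(C(S)\otimes C(T))_n=\bigoplus_{p+q=n}C_p(S)\otimes_\Z C_q(T)$ with the Koszul-signed differential $\partial(x\otimes y)=\partial x\otimes y+(-1)^{|x|}x\otimes\partial y$. The rank of a tensor product of free modules is the product of the ranks, so
\[
 c_n(S\otimes T)=\sum_{p+q=n}c_p(S)\,c_q(T),
\]
which is exactly the coefficient of $t^n$ in $\Mpol_S\Mpol_T$. For the Poincaré polynomial we first note that base change commutes with the tensor product: $C(S\otimes T;\kk)\cong C(S;\kk)\otimes_\kk C(T;\kk)$. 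We then apply the Künneth theorem over the field $\kk$:
\[
 H_n\bigl(C(S;\kk)\otimes_\kk C(T;\kk)\bigr)\cong\bigoplus_{p+q=n}H_p(C(S;\kk))\otimes_\kk H_q(C(T;\kk)).
\]
There is no Tor term because every $\kk$-module is flat. Taking dimensions gives $\beta_n(S\otimes T)=\sum_{p+q=n}\beta_p(S)\beta_q(T)$, which is $\Ppol_{S\otimes T}=\Ppol_S\Ppol_T$.

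The only step that needs real argument is the Künneth isomorphism. To keep things self-contained, I would prove it by splitting each complex over $\kk$. Choose decompositions $C(S;\kk)\cong H(S)\oplus\bigoplus(\text{elementary complexes }\kk\xrightarrow{\ \id\ }\kk)$, with $H(S)$ carrying zero differential, and do the same for $T$. Such splittings exist because over a field one can pick complements to the boundaries inside the cycles, and complements to the cycles inside the chains. Next observe that the tensor product of any complex with an elementary contractible complex is contractible. The tensor product then distributes into $H(S)\otimes H(T)$, which has zero differential, plus a contractible part, and the Künneth formula follows. Finally, the Koszul sign does not affect any dimension count, and the accumulator degree plays no role because we work with reduced complexes throughout.
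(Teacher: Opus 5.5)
Your proposal is correct and follows the paper's proof: counting basis elements gives the cell-count identities, and the K\"unneth theorem over a field gives the product formula for $\Ppol$. Your optional self-contained derivation of K\"unneth, splitting each complex into its homology plus elementary contractible pairs, is the field splitting the paper proves later as \Cref{thm:fieldsplit}; it adds detail but is not a different route.
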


\begin{proof}
The cell-count identities follow by counting basis elements. The homological product formula is the K\"unneth theorem over a field.
\end{proof}

Thus direct sum gives a rigorous additive realization, while tensor product gives a rigorous Cartesian-product realization. Arbitrary fibre bundles require extra data; see \Cref{sec:division,sec:mapping}.

\section{The cell-count decomposition}
\label{sec:normal}

\begin{theorem}[Cell-count decomposition over a field]
\label{thm:normal}
For every finite script $S$ and field $\kk$,
\begin{equation}
\label{eq:normal}
  \boxed{\Mpol_S(t)=\Ppol_{S,\kk}(t)+(1+t)\Bpol_{S,\kk}(t).}
\end{equation}
In particular, $\Mpol_S-\Ppol_{S,\kk}$ is divisible by $1+t$ with nonnegative coefficients.
\end{theorem}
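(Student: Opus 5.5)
The identity is rank--nullity applied degreewise to the finite-dimensional complex $C(S;\kk)$. For each $k\ge 0$ we have $\dim\ker\partial_k=c_k-r_k$, with the convention $r_0=0$, since $\partial_0$ is the zero map after discarding the accumulator. The image of $\partial_{k+1}$ has dimension $r_{k+1}$ and lies in $\ker\partial_k$ because $\partial^2=0$. Hence
\[
 \beta_k=\dim\ker\partial_k-\dim\im\partial_{k+1}=c_k-r_k-r_{k+1},
\]
that is, $c_k=\beta_k+r_k+r_{k+1}$ for every $k$. Finiteness of $S$ guarantees that only finitely many terms are nonzero and that $r_{n+1}=0$ beyond the top degree.

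Next we multiply by $t^k$ and sum over $k\ge0$. The terms $\sum_k c_kt^k$ and $\sum_k\beta_kt^k$ give $\Mpol_S$ and $\Ppol_{S,\kk}$, and $\sum_k r_{k+1}t^k=\Bpol_{S,\kk}(t)$ by definition. For the remaining sum we reindex:
\[
 \sum_{k\ge0} r_kt^k=\sum_{k\ge1}r_kt^k=t\sum_{j\ge0}r_{j+1}t^j=t\,\Bpol_{S,\kk}(t),
\]
using $r_0=0$. Adding these yields \eqref{eq:normal}.

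The final clause follows at once: $\Mpol_S-\Ppol_{S,\kk}=(1+t)\Bpol_{S,\kk}$, and $\Bpol_{S,\kk}$ has coefficients $r_{k+1}\ge0$, being ranks. The only point needing care, and not a real obstacle, is the boundary bookkeeping in degree $0$ and in the top degree. In degree $0$ we must use the reduced complex, in which $C_0$ maps to $0$ rather than to the accumulator, so that $r_0=0$ matches the definition and the reindexing in the display above produces no stray constant term.
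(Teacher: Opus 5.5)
Your proof is correct and is essentially the paper's argument. Rank--nullity together with $\partial_k\partial_{k+1}=0$ gives $c_k-\beta_k=r_k+r_{k+1}$, and summing with the shift $\sum_k r_kt^k=t\,\Bpol_{S,\kk}(t)$ (using $r_0=0$) yields the identity; you also spell out the reindexing and the nonnegativity of the quotient, which the paper leaves implicit.
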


\begin{proof}
Rank--nullity and $\partial_k\partial_{k+1}=0$ give
\[
 \beta_k=(c_k-r_k)-r_{k+1},
\]
so $c_k-\beta_k=r_k+r_{k+1}$. Summation gives \eqref{eq:normal}.
\end{proof}

\begin{remark}[Relation with Morse inequalities]
Equation \eqref{eq:normal} has the form of the strong polynomial Morse inequalities. In a Morse or discrete Morse complex, $\Mpol$ counts critical cells and $\Ppol$ records homology; see \cite{Milnor1963,Forman1998,Forman2002}. The script interpretation identifies the correction polynomial explicitly with the ranks of the cellular boundary maps. We use this identity as a semantics for finite morphological quantities rather than as a new algebraic statement.
\end{remark}

\begin{definition}[Elementary contractible pair]
For $k\ge1$, let $E_k$ be
\[
 0\longleftarrow \kk e_{k-1}\xleftarrow{\id}\kk e_k\longleftarrow0,
\]
concentrated in degrees $k-1,k$.
\end{definition}

\begin{theorem}[Splitting over a field]
\label{thm:fieldsplit}
Every finite-dimensional chain complex $C$ over a field admits a non-canonical chain isomorphism
\[
 C\cong H(C)\oplus\bigoplus_{k\ge1}E_k^{\oplus r_k},
\]
where $H(C)$ has zero differential and $r_k=\rank\partial_k$.
\end{theorem}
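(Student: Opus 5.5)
The plan is to construct, degree by degree, a basis of each $C_k$ adapted to the subspaces $B_k=\im\partial_{k+1}\subseteq Z_k=\ker\partial_k\subseteq C_k$, and then read off the decomposition. Over a field every subspace has a complement, so I would first choose, for each $k$, a complement $D_k$ of $Z_k$ in $C_k$, so that $C_k=Z_k\oplus D_k$ and $\partial_k|_{D_k}:D_k\to B_{k-1}$ is an isomorphism. In particular $\dim D_k=r_k$. Next I would choose a complement $H_k'$ of $B_k$ inside $Z_k$, giving $Z_k=B_k\oplus H_k'$ and $H_k'\cong H_k(C)$ via the quotient map. Altogether
\[
 C_k=H_k'\oplus B_k\oplus D_k .
\]

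To assemble the chain isomorphism, I would pick a basis $\{d_{k,i}\}_{i=1}^{r_k}$ of each $D_k$. Their images $\partial d_{k,i}$ then form a basis of $B_{k-1}$, because $\partial_k|_{D_k}$ is injective with image $B_{k-1}$. For each $i$, the span of $d_{k,i}$ (in degree $k$) and $\partial d_{k,i}$ (in degree $k-1$) is a subcomplex isomorphic to $E_k$: the differential sends $d_{k,i}\mapsto\partial d_{k,i}$, and $\partial\partial d_{k,i}=0$. Each $H_k'$ consists of cycles, so the differential restricted to it is zero, and $\bigoplus_k H_k'$ is a subcomplex with zero differential that is isomorphic to $H(C)$.

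The degreewise decomposition $C_k=H_k'\oplus B_k\oplus D_k$ shows that these subcomplexes together span $C$ and that their sum is direct, since $B_k$ is spanned by the $\partial d_{k+1,i}$. Hence the internal direct sum of subcomplexes is all of $C$, which gives
\[
 C\cong H(C)\oplus\bigoplus_{k\ge1}E_k^{\oplus r_k}.
\]
The isomorphism is non-canonical because it depends on the choices of complements.

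The only point needing care is directness in each degree. I would check it against the count $c_k=\beta_k+r_{k+1}+r_k$ from the proof of \Cref{thm:normal}, which also makes the result consistent with \eqref{eq:normal}. I do not expect a genuine obstacle: the argument relies only on the existence of complements over a field, and that is exactly the step that fails over $\Z$, which is what motivates the Smith refinement.
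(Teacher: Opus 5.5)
Your proposal is correct and is essentially the paper's proof: the paper also chooses complements $Z_k=B_k\oplus H'_k$ and $C_k=Z_k\oplus L_k$ (your $D_k$), uses that $\partial_k\colon L_k\to B_{k-1}$ is an isomorphism, and splits it into $r_k$ copies of $E_k$. Your basis-level check that the pieces are subcomplexes and that the sum is direct in each degree spells out what the paper leaves implicit.
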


\begin{proof}
Let $Z_k=\ker\partial_k$ and $B_k=\im\partial_{k+1}$. Choose complements $Z_k=B_k\oplus H'_k$ and $C_k=Z_k\oplus L_k$. The restriction $\partial_k:L_k\to B_{k-1}$ is an isomorphism. The $H'_k$ give homology with zero differential, and each $L_k\to B_{k-1}$ splits into $r_k$ copies of $E_k$.
\end{proof}

\begin{corollary}[Euler evaluation]
\label{cor:euler}
For every finite script,
\[
 \Mpol_S(-1)=\Ppol_{S,\kk}(-1)=\chi(S;\kk).
\]
\end{corollary}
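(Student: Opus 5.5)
The plan is to evaluate the boxed identity \eqref{eq:normal} of \Cref{thm:normal} at $t=-1$. The factor $(1+t)$ vanishes there, so the correction term $(1+t)\Bpol_{S,\kk}(t)$ disappears, and we get $\Mpol_S(-1)=\Ppol_{S,\kk}(-1)$ directly. This is legitimate because all three polynomials are genuine polynomials: $S$ is finite, so only finitely many $c_k$, $\beta_k$, $r_k$ are nonzero.

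It remains to identify the common value with $\chi(S;\kk)$. We take $\chi(S;\kk)$ to be the alternating sum $\sum_k(-1)^k\beta_k$ of the $\kk$-Betti numbers of the reduced complex $C(S;\kk)$. By definition of $\Ppol_{S,\kk}$, this sum is exactly $\Ppol_{S,\kk}(-1)$. We also note that $\Mpol_S(-1)=\sum_k(-1)^kc_k$ is the cellular alternating count. Here $c_k=\dim_\kk C_k(S;\kk)$ equals the integral rank of $C_k(S)$, since $C_k(S)$ is free, so the left side does not depend on $\kk$. A consequence worth recording is that $\chi(S;\kk)$ is independent of the field, even though the individual $\beta_k$ may not be.

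As an alternative route, and a consistency check, we can use \Cref{thm:fieldsplit}. Each elementary pair $E_k$ contributes $(-1)^{k-1}+(-1)^k=0$ to the alternating dimension count. Hence the alternating count of $C(S;\kk)$ equals that of $H(C)$, which is the same statement.

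There is no substantive obstacle. The only point needing care is the convention for $\chi$. Since the accumulator degree $-1$ has been discarded, $\chi$ here is the unreduced Euler characteristic of the reduced script complex $C(S)$. We should state this convention explicitly rather than use the augmented version, which would shift the value by one.
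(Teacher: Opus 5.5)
Your proposal is correct and follows the paper's intended route. The paper states the corollary without proof, but it is obtained by setting $t=-1$ in \eqref{eq:normal}, exactly as the paper later does for the diagrammatic Euler formula. Your additional splitting-theorem check and your explicit convention for $\chi(S;\kk)$ (unreduced, accumulator discarded) are sound clarifications of points the paper leaves implicit.
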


\begin{corollary}[Boundary polynomial of a product]
For finite scripts over a field,
\[
 \Bpol_{S\otimes T}=\Ppol_S\Bpol_T+\Ppol_T\Bpol_S+(1+t)\Bpol_S\Bpol_T.
\]
\end{corollary}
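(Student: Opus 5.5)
We derive the formula from \Cref{thm:normal} applied three times, using \Cref{prop:semiring} to write the product side; no new chain-level computation is needed. Work over the fixed field $\kk$ and abbreviate $\Mpol_S,\Ppol_S,\Bpol_S$, and similarly for $T$ and $S\otimes T$.

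\textbf{Step 1.} By \Cref{prop:semiring}, $\Mpol_{S\otimes T}=\Mpol_S\Mpol_T$ and $\Ppol_{S\otimes T}=\Ppol_S\Ppol_T$. Applying \Cref{thm:normal} to $S\otimes T$ gives
\[
(1+t)\Bpol_{S\otimes T}=\Mpol_S\Mpol_T-\Ppol_S\Ppol_T .
\]

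\textbf{Step 2.} Substitute $\Mpol_S=\Ppol_S+(1+t)\Bpol_S$ and the analogous expression for $T$, both from \Cref{thm:normal}. Expanding the product, the term $\Ppol_S\Ppol_T$ cancels, and the right-hand side becomes
\[
(1+t)\bigl(\Ppol_S\Bpol_T+\Ppol_T\Bpol_S\bigr)+(1+t)^2\Bpol_S\Bpol_T .
\]

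\textbf{Step 3.} Cancel the common factor $1+t$. This is legitimate because $\kk[t]$, or already $\Z[t]$, is an integral domain and $1+t\neq0$. The result is exactly the claimed identity.

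As an independent check, one can use \Cref{thm:fieldsplit} instead. Write $C(S)\cong H(S)\oplus\bigoplus E_k^{r_k}$, and similarly for $T$, then distribute the tensor product. The summand $H\otimes H$ contributes homology only. Each summand $H\otimes E$ is a direct sum of shifted elementary pairs and contributes $\Ppol\Bpol$ to the boundary polynomial. Each summand $E_j\otimes E_k$ is a four-cell contractible complex with total boundary rank $2$, occurring in degrees $j+k-2$ and $j+k-1$; it contributes $t^{j-1}t^{k-1}(1+t)$, which accounts for the term $(1+t)\Bpol_S\Bpol_T$.

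The only step that needs care is the cancellation of $1+t$ in Step 3, and it is routine. So the main point is simply the bookkeeping of the expansion in Step 2.
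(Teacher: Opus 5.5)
Your argument is correct and is the paper's own proof: substitute $\Mpol=\Ppol+(1+t)\Bpol$ into the product formulas of \Cref{prop:semiring}, apply \Cref{thm:normal} to $S\otimes T$, and cancel the factor $1+t$ in the integral domain $\Z[t]$. Your cross-check via \Cref{thm:fieldsplit} is also sound, since each $E_j\otimes E_k$ is contractible with boundary polynomial $t^{j+k-2}(1+t)$.
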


\begin{proof}
Insert $\Mpol=\Ppol+(1+t)\Bpol$ into Proposition~\ref{prop:semiring} and divide by $1+t$.
\end{proof}

\section{Integral Smith labels}
\label{sec:smith}

The field identity cannot distinguish a unit differential from multiplication by an integer $d>1$.

\begin{definition}[Integral two-term pair]
For $k\ge1$ and $d\in\Z_{>0}$, let
\[
 T_k(d):\quad 0\longleftarrow\Z e_{k-1}\xleftarrow{d}\Z e_k\longleftarrow0.
\]
The pair $T_k(1)=E_k$ is contractible, while for $d>1$ it contributes $\Z/d$ to $H_{k-1}$.
\end{definition}

\begin{theorem}[Integral elementary decomposition]
\label{thm:smith}
Let $C$ be a bounded chain complex of finitely generated free abelian groups. Write
\[
 H_j(C;\Z)\cong\Z^{\beta_j}\oplus\bigoplus_{i=1}^{s_j}\Z/d_{j,i},
 \qquad 2\le d_{j,1}\mid\cdots\mid d_{j,s_j},
\]
and let $r_k=\rank_\mathbb Q(\partial_k\otimes\mathbb Q)$. After integral changes of basis,
\[
 C\cong
 \bigoplus_{j\ge0}\Z[j]^{\oplus\beta_j}
 \oplus\bigoplus_{k\ge1}
 \left(
   \bigoplus_{i=1}^{s_{k-1}}T_k(d_{k-1,i})
   \oplus T_k(1)^{\oplus(r_k-s_{k-1})}
 \right).
\]
\end{theorem}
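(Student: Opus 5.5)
The plan is to mimic the proof of \Cref{thm:fieldsplit}, replacing the choice of complements by the structure theory of finitely generated abelian groups and Smith normal form. Let $Z_k=\ker\partial_k\subseteq C_k$ and $B_{k-1}=\im\partial_k\subseteq C_{k-1}$. Two facts drive the argument. First, $C_k/Z_k\cong B_{k-1}$ is a subgroup of a free group, hence free, so the sequence $0\to Z_k\to C_k\to B_{k-1}\to0$ splits. This gives $C_k=Z_k\oplus L_k$ with $\partial_k|_{L_k}:L_k\to B_{k-1}$ an isomorphism, and $L_k$ free of rank $r_k$. Second, $Z_{k-1}$ is free (a subgroup of $C_{k-1}$), and $B_{k-1}\subseteq Z_{k-1}$ has finite-rank quotient $H_{k-1}$.

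The key step applies Smith normal form, or the stacked-basis theorem, to the inclusion $B_{k-1}\hookrightarrow Z_{k-1}$. We choose a basis $z_1,\dots,z_m$ of $Z_{k-1}$ and positive integers $e_1\mid\cdots\mid e_{r_k}$ such that $e_1z_1,\dots,e_{r_k}z_{r_k}$ is a basis of $B_{k-1}$. Then $H_{k-1}\cong\Z^{m-r_k}\oplus\bigoplus_i\Z/e_i$. Uniqueness of invariant factors shows that exactly $s_{k-1}$ of the $e_i$ exceed $1$, that these equal $d_{k-1,1},\dots,d_{k-1,s_{k-1}}$, that the remaining $r_k-s_{k-1}$ equal $1$, and that $m-r_k=\beta_{k-1}$. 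We then pull back through the isomorphism $\partial_k|_{L_k}$: the elements $\ell_i=(\partial_k|_{L_k})^{-1}(e_iz_i)$ form a basis of $L_k$ with $\partial_k\ell_i=e_iz_i$.

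Assembling, in each degree $j$ the basis of $C_j=Z_j\oplus L_j$ is $\{z^{(j)}_i\}\cup\{\ell^{(j)}_i\}$. The pairs $(z^{(k-1)}_i,\ell^{(k)}_i)$ for $i\le r_k$ span subcomplexes $T_k(e_i)$. The leftover cycles $z^{(j)}_i$ with $i>r_{j+1}$ span copies of $\Z[j]$ with zero differential. The $\ell$'s are mapped to $Z_{k-1}$, and the $z$'s are mapped to $0$. Every basis element therefore lies in exactly one summand, and each summand is closed under $\partial$. So this is a direct-sum decomposition of chain complexes, and it is precisely the displayed one. Reordering the $e_i$ puts the unit pairs and the torsion pairs into the stated form.

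The main obstacle is the bookkeeping in the invariant-factor identification. The stacked basis must be taken inside $Z_{k-1}$ rather than inside $C_{k-1}$; the cokernel $C_{k-1}/B_{k-1}$ would mix in the non-cycle part $L_{k-1}$. One must also check that the basis of $Z_{k-1}$ chosen for the degree-$k$ pairing is compatible with the splitting $C_{k-1}=Z_{k-1}\oplus L_{k-1}$ used for the degree-$(k-1)$ pairing. It is, because the two choices live in independent summands: $L_{k-1}$ is chosen once, and any basis of $Z_{k-1}$ may be used alongside it. Finally, the rank count $r_k=\rank_\mathbb Q(\partial_k\otimes\mathbb Q)$ agrees with $\rank B_{k-1}$, since tensoring with $\mathbb Q$ is exact.
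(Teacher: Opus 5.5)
Your proposal is correct and follows essentially the same route as the paper's proof: split $C_k=Z_k\oplus L_k$ using freeness of $B_{k-1}$, apply Smith normal form to the inclusion of boundaries into cycles (the paper indexes this as $B_k\hookrightarrow Z_k$), and transport the adapted basis back through $\partial_k|_{L_k}$. Your explicit identification of the invariant factors with the $d_{k-1,i}$ and with $\beta_{k-1}$, and your remark that the basis choice in $Z_{k-1}$ is independent of the complement $L_{k-1}$, fill in bookkeeping that the paper leaves implicit.
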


\begin{proof}
Put $Z_k=\ker\partial_k$ and $B_k=\im\partial_{k+1}$. Since $B_{k-1}$ is a free abelian group, the exact sequence
\[
 0\longrightarrow Z_k\longrightarrow C_k\xrightarrow{\partial_k}B_{k-1}\longrightarrow0
\]
splits; choose $C_k=Z_k\oplus L_k$, so that $\partial_k:L_k\to B_{k-1}$ is an isomorphism. Apply Smith normal form to the inclusion $B_k\hookrightarrow Z_k$. Thus one can choose a basis $z_1,\ldots,z_m$ of $Z_k$ for which $B_k$ is generated by
\[
 d_{k,1}z_1,\ldots,d_{k,s_k}z_{s_k},z_{s_k+1},\ldots,z_{r_{k+1}},
\]
where $2\le d_{k,1}\mid\cdots\mid d_{k,s_k}$ and the omitted basis vectors represent the free part of $H_k(C;\Z)$. Transport this basis of $B_k$ through the isomorphism $L_{k+1}\to B_k$. The resulting pairs are $T_{k+1}(d_{k,i})$ for the torsion factors and $T_{k+1}(1)$ for the remaining boundary generators; the unpaired basis vectors give $\Z[k]$. Performing this simultaneously in every degree yields the displayed decomposition. This is the standard elementary decomposition of a bounded finite free complex over a PID; compare \cite[Sec.~3.6]{Weibel1994}.
\end{proof}

\begin{definition}[Smith-labelled boundary profile]
Let $\mathcal A$ be the free commutative monoid on symbols $[d]$, $d\in\Z_{>0}$. Define
\[
 \Spol_C(t)=\sum_{k\ge1}
 \left((r_k-s_{k-1})[1]+\sum_{i=1}^{s_{k-1}}[d_{k-1,i}]\right)t^{k-1}.
\]
Forgetting labels, $[d]\mapsto1$, gives $|\Spol_C|=\Bpol_{C,\mathbb Q}$.
\end{definition}

A label $[1]$ is algebraically removable. In the geometric subdivision classes treated in \Cref{sec:refinement}, such labels record refinement overhead. A label $[d]$, $d>1$, records torsion.

\begin{example}[Torus, Klein bottle, and projective plane]
Up to integral chain isomorphism,
\begin{align*}
 C(T^2)&\cong\Z[0]\oplus\Z[1]^{\oplus2}\oplus\Z[2],\\
 C(K)&\cong\Z[0]\oplus\Z[1]\oplus T_2(2),\\
 C(\mathbb{RP}^2)&\cong\Z[0]\oplus T_2(2).
\end{align*}
Thus $T^2$ and $K$ share the cell polynomial $(1+t)^2$ but have Smith profiles $0$ and $[2]t$, respectively.
\end{example}

\begin{remark}
The basis changes in \Cref{thm:smith} need not be geomaps and may mix geometric cells. The theorem classifies the underlying free chain complex, not the script as a geometric object.
\end{remark}

\section{Elementary expansions and Euler collapse}
\label{sec:collapse}

\begin{definition}[Algebraic elementary expansion]
A complex $C'$ is an elementary expansion of $C$ in degree $k$ if $C'\cong C\oplus E_k$. An elementary collapse is the reverse operation.
\end{definition}

\begin{proposition}[Expansion overhead]
If $C'=C\oplus E_k$, then
\[
 \Mpol_{C'}=\Mpol_C+t^{k-1}+t^k,\qquad
 \Ppol_{C'}=\Ppol_C,\qquad
 \Bpol_{C'}=\Bpol_C+t^{k-1}.
\]
\end{proposition}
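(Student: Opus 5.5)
The plan is to read off all three identities from additivity under direct sums, applied to the chain isomorphism $C'\cong C\oplus E_k$. Every invariant involved ($c_j$, $\beta_j$, $r_j$) is defined from dimensions of chain groups, homology groups and images of differentials, so each is invariant under chain isomorphism. It therefore suffices to treat $C'=C\oplus E_k$ literally, with the block-diagonal differential $\partial'_j=\partial_j\oplus\partial^{E}_j$.

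\textbf{Cell counts.} The complex $E_k$ contributes one basis element in degree $k-1$ and one in degree $k$. Hence $c'_j=c_j+\delta_{j,k-1}+\delta_{j,k}$, which gives $\Mpol_{C'}=\Mpol_C+t^{k-1}+t^k$. This is the additivity already noted in \Cref{prop:semiring}.

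\textbf{Homology.} Homology commutes with direct sums, so $H(C')=H(C)\oplus H(E_k)$. Since the differential of $E_k$ is $\id$, the complex $E_k$ is acyclic, and therefore $\Ppol_{C'}=\Ppol_C+\Ppol_{E_k}=\Ppol_C$. This uses the additivity of $\Ppol$ from \Cref{prop:semiring}.

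\textbf{Boundary ranks.} The rank of a block-diagonal map is the sum of the ranks of its blocks. The only nonzero differential of $E_k$ is $\partial^E_k=\id$, of rank one. Hence $r'_j=r_j+\delta_{j,k}$, and since $\Bpol$ records $r_{j+1}$ at $t^j$, we get $\Bpol_{C'}=\Bpol_C+t^{k-1}$.

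As a consistency check, $(1+t)t^{k-1}=t^{k-1}+t^k$ agrees with \Cref{thm:normal}. Alternatively, the third identity follows from the first two by \Cref{thm:normal} and cancellation of $1+t$ in $\Z[t]$.

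There is no real obstacle here. The only point needing care is the index shift in $\Bpol$: the rank-one differential sits in degree $k$ but is recorded at $t^{k-1}$.
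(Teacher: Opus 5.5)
Your proof is correct. The paper states this proposition without proof and treats it as immediate from additivity under direct sums (\Cref{prop:semiring}), acyclicity of $E_k$, and \Cref{thm:normal}, which is exactly the verification you give, including the index shift that records the rank-one differential $\partial^{E}_k$ at $t^{k-1}$.
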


\begin{theorem}[Universal cell-count collapse]
\label{thm:collapse}
Let $I\subset\Z[t]$ be the ideal generated by $t^{k-1}+t^k$ for $k\ge1$. Then
\[
 \Z[t]/I\cong\Z[t]/(1+t)\cong\Z,
\]
where the final map is evaluation at $t=-1$. More precisely, let $A$ be an abelian group and let $F:\N[t]\to A$ be an additive monoid homomorphism. If
\[
 F(p+t^{k-1}+t^k)=F(p)
\]
for every $p\in\N[t]$ and $k\ge1$, then there is a unique group homomorphism $\varphi:\Z\to A$ such that
\[
 F(p)=\varphi\bigl(p(-1)\bigr)
\]
for every $p\in\N[t]$.
\end{theorem}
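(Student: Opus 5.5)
The proof has two parts: the ideal computation, then the universal property.

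\emph{The ideal.} Each generator factors as $t^{k-1}+t^k=t^{k-1}(1+t)$, so $I\subseteq(1+t)$. The $k=1$ generator is $1+t$ itself, so $(1+t)\subseteq I$, and therefore $I=(1+t)$. Since $1+t$ is monic of degree one, division with remainder by $1+t$ identifies $\Z[t]/(1+t)$ with $\Z$ via $p\mapsto p(-1)$. This gives the displayed isomorphisms.

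\emph{The universal property.} First extend $F$ to a group homomorphism $\tilde F:\Z[t]\to A$. Every element of $\Z[t]$ is a difference $p-q$ with $p,q\in\N[t]$, and we set $\tilde F(p-q)=F(p)-F(q)$. This is well defined: if $p-q=p'-q'$, then $p+q'=p'+q$ in $\N[t]$, and additivity of $F$ gives $F(p)+F(q')=F(p')+F(q)$. Additivity of $\tilde F$ is then immediate. Equivalently, $\Z[t]$ is the group completion of the free commutative monoid $\N[t]$ on the monomials $t^j$.

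Next we show that $\tilde F$ kills $I$. Apply the hypothesis with $p=0$; note that $F(0)=0$ because $F$ is a monoid homomorphism. This gives $F(t^{k-1}+t^k)=0$, and additivity then yields $\tilde F(t^{k-1})=-\tilde F(t^k)$ for every $k\ge1$. Hence $\tilde F$ vanishes on the additive subgroup spanned by the generators $t^{k-1}+t^k$.

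This step needs care, because $I$ is an ideal, and the span of the generators is a priori only a subgroup. The point is that the subgroup is already the whole ideal. Multiplying a generator by $t^j$ gives $t^{k-1+j}+t^{k+j}$, which is again a generator, so the subgroup is closed under multiplication by $t$. It is therefore closed under multiplication by all of $\Z[t]$, so it equals $I$.

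Thus $\tilde F$ factors through $\Z[t]/I\cong\Z$. Define $\varphi(n)=n\,F(1)$. Induction on the relation above gives $F(t^j)=(-1)^jF(1)$. Additivity then gives $F(p)=p(-1)\,F(1)=\varphi(p(-1))$ for every $p\in\N[t]$.

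For uniqueness, note that any group homomorphism $\Z\to A$ is determined by the image of $1$. Since $p=1$ has $p(-1)=1$, any $\varphi$ with the required property must satisfy $\varphi(1)=F(1)$, so $\varphi$ is forced.

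The only step that needs attention is the passage from the monoid-level relations to the ideal; the closure of the generating set under multiplication by $t$ handles it. Everything else is routine.
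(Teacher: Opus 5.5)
Your proof is correct and follows the same route as the paper: show $I=(1+t)$ from $t^{k-1}+t^k=t^{k-1}(1+t)$, extend $F$ to the Grothendieck group $\Z[t]$, check that the extension vanishes on $I$, and factor through $\Z[t]/(1+t)\cong\Z$. You also make explicit several points the paper leaves implicit: that the extension is well defined, that the additive span of the generators is already the whole ideal, and that $\varphi(n)=n\,F(1)$, with uniqueness forced by taking $p=1$.
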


\begin{proof}
The generators are $t^{k-1}(1+t)$, so $I=(1+t)$. Additivity extends $F$ uniquely to the Grothendieck group $\Z[t]$ of $\N[t]$. Expansion invariance makes the extension vanish on $I$, hence it factors uniquely through $\Z[t]/(1+t)\cong\Z$.
\end{proof}

This theorem concerns only invariants of the cell polynomial. Script geometry avoids the collapse by retaining $\partial$; homology avoids it by passing to $\Ppol$.

\section{Geometric refinement overhead}
\label{sec:refinement}

The categorical treatment of script geometry models refinement by injective geomaps and cell expansions \cite{CerejeirasKahlerLegatiuk2025}. The following moves produce unit-labelled relative complexes.

\subsection{Edges and polygonal faces}

\begin{proposition}[Edge subdivision]
\label{prop:edge}
Replace an oriented edge $e=[v_0,v_1]$ by $e_1=[v_0,w]$ and $e_2=[w,v_1]$, and define $i(e)=e_1+e_2$. Then $i:C_*(X)\to C_*(X')$ is injective and
\[
 C_*(X')/iC_*(X)\cong E_1.
\]
Thus $\Mpol_{X'}-\Mpol_X=1+t$ and integral homology is unchanged.
\end{proposition}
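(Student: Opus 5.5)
The plan is to write down the cellular chain complexes explicitly and check that $i$ is a chain map; then identify the cokernel degree by degree; and finally read off the cell count and homology from the short exact sequence, using \Cref{thm:normal} and the long exact sequence.

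First I fix conventions. The cellular complex $C_*(X')$ has the cells of $X$ except $e$, together with a new vertex $w$ and edges $e_1,e_2$. Higher cells whose boundary contains $e$ (with some multiplicity $m$) keep the same attaching data, now read through $e\mapsto e_1+e_2$. Define $i$ as the identity on every cell other than $e$ and set $i(e)=e_1+e_2$; in degree $0$, $i$ is the inclusion of vertices.

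The chain-map condition needs checking in two places.
\begin{itemize}
\item In degree $1$: $\partial'(e_1+e_2)=(w-v_0)+(v_1-w)=v_1-v_0=i(\partial e)$.
\item In degree $2$: for a face $f$, $\partial' f$ is by construction $\partial f$ with $e$ replaced by $e_1+e_2$, so $\partial' i(f)=i(\partial f)$.
\end{itemize}
This second check is the only point needing care, because it is where the convention for how faces of $X'$ attach must be stated. I would simply define the refined script so that this holds, since the subdivided face's boundary word traverses $e_1e_2$ in place of $e$.

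Injectivity holds because $i$ sends the basis of $C_k(X)$ to linearly independent vectors: in degree $1$, $e_1+e_2$ involves new basis elements disjoint from the other edges. For the quotient:
\begin{itemize}
\item in degree $0$ it is $\Z w$;
\item in degree $1$ it is $\Z\langle e_1,e_2\rangle/\Z(e_1+e_2)\cong\Z\bar e_1$, which is free, so $i$ is split injective;
\item in all other degrees it is $0$.
\end{itemize}
The induced differential sends $\bar e_1\mapsto \overline{w-v_0}=w$, since $v_0\in\im i$. So the quotient is $\Z w\xleftarrow{1}\Z\bar e_1$, which is $E_1=T_1(1)$.

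Finally, the cell-count statement is additivity of ranks along the short exact sequence $0\to C_*(X)\to C_*(X')\to E_1\to0$, giving $\Mpol_{X'}=\Mpol_X+1+t$. Since $E_1$ is acyclic over $\Z$, the long exact homology sequence shows that $i_*$ is an isomorphism on integral homology. Equivalently, because the sequence splits degreewise, $C_*(X')\simeq C_*(X)\oplus E_1$ up to chain homotopy, which is an elementary expansion in degree $1$ with a unit label.
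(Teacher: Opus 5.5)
Your proof is correct and is essentially the paper's argument: the paper's proof just notes that the quotient is generated by $[e_1]$ and $[w]$ with $\partial[e_1]=[w]$, and you additionally write out the chain-map check, injectivity, degreewise freeness of the quotient, and the long-exact-sequence step that it leaves implicit. Your closing aside is looser than it needs to be, because a mere chain homotopy equivalence $C_*(X')\simeq C_*(X)\oplus E_1$ holds vacuously and does not meet the paper's definition of an elementary expansion, which requires an isomorphism. The stronger statement does hold: with degree-one basis $\{\text{old edges},\,e_1+e_2,\,e_1\}$ and degree-zero basis $\{\text{old vertices},\,w-v_0\}$ one gets an honest isomorphism $C_*(X')\cong C_*(X)\oplus E_1$.
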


\begin{proof}
The quotient is generated by $[e_1]$ and $[w]$, with $\partial[e_1]=[w]$.
\end{proof}

\begin{proposition}[Binary face splitting]
\label{prop:face}
Let a regular polygonal $2$-cell $f$ be split into $f_1,f_2$ by a new edge $a$ whose endpoints already lie in the old boundary, with orientations chosen so $\partial f_1+\partial f_2=\partial f$. Under $i(f)=f_1+f_2$,
\[
 C_*(X')/iC_*(X)\cong E_2.
\]
Thus $\Mpol_{X'}-\Mpol_X=t+t^2$.
\end{proposition}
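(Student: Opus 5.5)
The argument mirrors \Cref{prop:edge}: identify the cells of the quotient complex and check that its only nonzero differential is a unit. The cells of $X'$ are those of $X$ with $f$ removed and $f_1,f_2,a$ added. No vertices are added, since the endpoints of $a$ already lie in the old boundary. The old boundary edges of $f$ survive unchanged as cells of $X'$.

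\emph{Step 1: $i$ is a chain map.} Define $i$ to be the identity on all cells other than $f$, and set $i(f)=f_1+f_2$. The only identity to check is $\partial i(f)=i(\partial f)$. This reads $\partial f_1+\partial f_2=\partial f$, which is the orientation hypothesis. Because the endpoints of $a$ are old vertices, the boundary of every cell other than $f$ is unchanged by the splitting.

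\emph{Step 2: $i$ is injective with free cokernel.} In degrees $0$ and $1$, $i$ is the inclusion of a subset of the cell basis. In degree $2$, send the basis $\{f_1,f_2\}$ together with the untouched $2$-cells to the basis $\{f_1+f_2,f_2\}$ together with the same untouched $2$-cells. This is a unimodular change of basis. Hence $i$ is injective in every degree. The quotient is free on $[a]$ in degree $1$ and on $[f_1]$ (equivalently $-[f_2]$) in degree $2$, and it vanishes in all other degrees.

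\emph{Step 3: the quotient differential is a unit.} Write $\partial f_1=a+\gamma_1$, where $\gamma_1$ is a chain of old boundary edges of $f$. This uses orientations chosen so that $a$ appears in $\partial f_1$ with coefficient $+1$ and in $\partial f_2$ with coefficient $-1$. Such a choice is forced, since $a\notin\partial f$ and $\partial f_1+\partial f_2=\partial f$. In the quotient $\gamma_1$ vanishes, so $\partial[f_1]=[a]$. Therefore the quotient is $0\leftarrow\Z[a]\xleftarrow{1}\Z[f_1]\leftarrow0$, which is $T_2(1)=E_2$.

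The cell-count statement follows either by counting or from the short exact sequence of free complexes: $\Mpol_{X'}-\Mpol_X=\Mpol_{E_2}=t+t^2$. Since $E_2$ is acyclic, the long exact sequence also shows that $i$ is a quasi-isomorphism.

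The main obstacle is Step 3. The key fact is that $a$ enters $\partial f_1$ with coefficient exactly $\pm1$. This is where regularity of the polygonal cell is used: $a$ is a single edge on the boundary of each half, with incidence number $\pm1$ and not $\pm2$. Regularity must therefore be invoked at this point and not assumed tacitly.
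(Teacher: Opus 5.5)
Your proof is correct and takes the same route as the paper, whose proof simply observes that the relative generators are $[f_1]$ and $[a]$ and that the relative boundary is multiplication by $\pm1$. Your Steps 1--2 (the chain-map check, the absence of new vertices, and the unimodular basis change in degree $2$) make explicit the verifications the paper leaves implicit.
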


\begin{proof}
The relative generators are $[f_1]$ and $[a]$, and the relative boundary is multiplication by $\pm1$.
\end{proof}

\begin{corollary}[Low-dimensional factorization]
If $X'$ is obtained from a graph or polygonal $2$-complex $X$ by $a$ edge subdivisions and $b$ binary face splits, then the successive relative quotients are $E_1^{\oplus a}$ and $E_2^{\oplus b}$ and
\[
 \Mpol_{X'}-\Mpol_X=(1+t)(a+bt).
\]
All Smith labels are units.
\end{corollary}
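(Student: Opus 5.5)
The plan is to argue by induction on the number $a+b$ of elementary moves, composing the injections supplied by \Cref{prop:edge,prop:face}. Order the moves as a sequence $X=X_0\subset X_1\subset\cdots\subset X_{a+b}=X'$. Each step carries a chain injection $i_j:C_*(X_{j-1})\to C_*(X_j)$ whose cokernel is $E_1$ (edge subdivision) or $E_2$ (binary face split). Let $I=i_{a+b}\circ\cdots\circ i_1$. The composite is injective and makes $C_*(X')$ a filtered complex with successive quotients $E_1$ or $E_2$.

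First, the cell count follows at once. The polynomial $\Mpol$ is additive in short exact sequences of free complexes: rank is additive degreewise, and the sequences split degreewise since the quotients are free. Hence
\[
\Mpol_{X'}-\Mpol_X=\sum_j\Mpol_{E_{k_j}}=a(1+t)+b(t+t^2)=(1+t)(a+bt).
\]

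Second, to assert the relative quotient $C_*(X')/IC_*(X)$ is a direct sum $E_1^{\oplus a}\oplus E_2^{\oplus b}$, I would use the fact that every extension of complexes of free modules by $E_k$ splits. The complex $E_k$ is contractible and free, so the degreewise split short exact sequence $0\to A\to B\to E_k\to0$ has its obstruction class in $H^0\operatorname{Hom}(E_k,A[1])=0$. Inductively the total relative complex is then chain isomorphic to $\bigoplus_j E_{k_j}$. The same argument over $\Z$ shows that $I$ is a chain homotopy equivalence, because its cokernel is contractible. Integral homology is therefore unchanged.

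Third, for the Smith labels I would apply \Cref{thm:smith} to $C_*(X')\cong C_*(X)\oplus\bigoplus_j E_{k_j}$. The decomposition is unique up to isomorphism, since the torsion invariants and the ranks $r_k$ are determined by the complex. The added summands are $T_k(1)$ and contribute only $[1]$ labels. The torsion of $X'$ equals that of $X$, so every additional label is a unit; this is the sense in which the refinement labels are all units.

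The main obstacle is the precise meaning of ``successive'' when a face split uses vertices created by earlier edge subdivisions, or when later moves act on cells created by earlier ones. I would handle this by checking that each move's hypotheses in \Cref{prop:edge,prop:face} hold relative to the current complex $X_{j-1}$, not the original $X$. For face splits this means the new edge's endpoints lie on the current boundary and orientations are chosen so that $\partial f_1+\partial f_2=\partial f$. Everything else is formal bookkeeping.
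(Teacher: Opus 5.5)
Your proposal is correct and matches the paper, which states this corollary without a separate proof as an immediate consequence of iterating \Cref{prop:edge} and \Cref{prop:face}: apply them relative to the current complex, add the per-move overheads $a(1+t)+b(t+t^2)=(1+t)(a+bt)$, and note that each quotient is $E_k=T_k(1)$. Your extension-splitting step, which assembles the total relative complex as $E_1^{\oplus a}\oplus E_2^{\oplus b}$ and shows that the composite injection is a homotopy equivalence, goes slightly beyond what the statement needs but is valid, and you rightly read ``all Smith labels are units'' as referring to the refinement overhead rather than to $C_*(X)$ itself, which may carry torsion.
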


\subsection{A general unit-defect criterion}

The preceding refinements are geometric examples of a general algebraic fact. The definition of refinement in current script geometry requires an injective geomap, but injectivity alone does not imply preservation of homology \cite{CerejeirasKahlerLegatiuk2025}. The following criterion separates the part that can be proved formally from the additional geometric hypothesis.

\begin{theorem}[Unit defect of a homology-preserving refinement]
\label{thm:generalrefinement}
Let $g:C\to C'$ be a quasi-isomorphism between bounded chain complexes of finitely generated free abelian groups.
\begin{enumerate}[label=\textup{(\alph*)}]
 \item The mapping cone $\operatorname{Cone}(g)$ is a finite free contractible complex. Consequently every nonzero Smith factor in its elementary decomposition is a unit.
 \item If, in addition, $g$ is degreewise split injective, then the quotient complex $Q=C'/gC$ is finite free and contractible. Hence
 \[
   Q\cong\bigoplus_{k\ge1}E_k^{\oplus m_k}
 \]
 for uniquely determined ranks $m_k\ge0$, and
 \[
   \Mpol_{C'}(t)-\Mpol_C(t)
   =(1+t)\sum_{k\ge1}m_k t^{k-1}.
 \]
\end{enumerate}
\end{theorem}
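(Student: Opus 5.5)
For part (a), I will use the long exact sequence of the mapping cone. The cone $\operatorname{Cone}(g)_k=C_{k-1}\oplus C'_k$ is degreewise finitely generated free, and it is bounded because $C$ and $C'$ are. The sequence
\[
\cdots\to H_k(C)\xrightarrow{g_*}H_k(C')\to H_k(\operatorname{Cone}(g))\to H_{k-1}(C)\xrightarrow{g_*}H_{k-1}(C')\to\cdots
\]
is exact. Since $g_*$ is an isomorphism in every degree, $H_*(\operatorname{Cone}(g);\Z)=0$. I then apply \Cref{thm:smith} to $\operatorname{Cone}(g)$. All $\beta_j$ and all torsion counts $s_j$ vanish, so the decomposition reduces to $\bigoplus_k T_k(1)^{\oplus r_k}$. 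Hence every Smith factor is a unit, and the Smith profile is $\Spol=\Bpol_{\mathbb Q}\,[1]$.

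For part (b), degreewise split injectivity gives a short exact sequence $0\to C\to C'\to Q\to 0$ in which each $Q_k\cong C'_k/gC_k$ is a direct summand of $C'_k$. So $Q_k$ is finitely generated free, and $Q$ is bounded. The long exact homology sequence of this short exact sequence, together with the fact that $g_*$ is an isomorphism, forces $H_*(Q)=0$. This is the same argument as in (a). Alternatively, $Q$ is quasi-isomorphic to $\operatorname{Cone}(g)$ via the standard comparison map, which applies because $g$ is degreewise injective. Applying \Cref{thm:smith} to $Q$ again leaves only unit pairs, so $Q\cong\bigoplus_k E_k^{\oplus m_k}$ over $\Z$, with $m_k=r_k(Q)=\rank_{\mathbb Q}\partial_k^Q$. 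Uniqueness of the $m_k$ follows because they are ranks of boundary maps, which are invariant under chain isomorphism. Equivalently, they are determined recursively by $\rank Q_k=m_k+m_{k+1}$ together with boundedness.

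For the polynomial identity, additivity of ranks in the split sequence gives $\Mpol_{C'}=\Mpol_C+\Mpol_Q$. Counting generators of $E_k$ gives $\Mpol_Q=\sum_k m_k(t^{k-1}+t^k)=(1+t)\sum_k m_k t^{k-1}$. One can also read this off from \Cref{thm:normal} with $\Ppol_Q=0$.

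The only point needing care is the contractibility claim. Over $\Z$, the vanishing of homology has to be upgraded to an actual chain splitting, and this is exactly what \Cref{thm:smith} supplies for bounded finite free complexes: acyclic implies no torsion labels and no free homology. I expect no further obstacle.
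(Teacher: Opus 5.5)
Your proposal is correct and follows the paper's proof. Both arguments get acyclicity of $\operatorname{Cone}(g)$ and of $Q$ from the long exact sequences, then upgrade acyclicity to a unit-only splitting. The only difference is that you invoke the integral elementary decomposition theorem with all $\beta_j$ and $s_j$ equal to zero, while the paper says directly that a bounded acyclic finite free complex is split exact because its cycle and boundary groups are free; that is the same argument underlying the theorem you cite.
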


\begin{proof}
A quasi-isomorphism has acyclic mapping cone. A bounded acyclic complex of finitely generated free abelian groups is split exact: its cycle and boundary groups are free, and the short exact sequences defining them split. Thus the cone is contractible and its Smith decomposition consists only of $T_k(1)=E_k$ blocks. If $g$ is degreewise split injective, then $Q$ is degreewise free. The long exact homology sequence of
\[
 0\longrightarrow C\xrightarrow{g}C'\longrightarrow Q\longrightarrow0
\]
shows that $Q$ is acyclic, and the same splitting argument gives the stated decomposition and polynomial identity.
\end{proof}

\begin{corollary}[What remains geometric]
Any refinement geomap that is both a quasi-isomorphism and degreewise split injective has only unit-labelled refinement overhead. Therefore the unresolved issue for general script refinement is not Smith theory but the geometric question of when an injective refinement geomap satisfies these two additional properties.
\end{corollary}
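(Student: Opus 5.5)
The corollary should follow directly from \Cref{thm:generalrefinement}(b). Let $i:C(X)\to C(X')$ be the chain map underlying a refinement geomap. By hypothesis $i$ is a quasi-isomorphism and degreewise split injective. Both reduced script complexes are bounded and finitely generated free, so the hypotheses of part (b) hold. Hence the relative complex $Q=C(X')/iC(X)$ is finite free and contractible, and it splits as $\bigoplus_k E_k^{\oplus m_k}$. Since $E_k=T_k(1)$, the Smith-labelled profile $\Spol_Q$ contains only the label $[1]$. The cell-count overhead is
\[
\Mpol_{X'}-\Mpol_X=(1+t)\sum_k m_k t^{k-1},
\]
a nonnegative multiple of $1+t$ with no torsion labels. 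This proves the first sentence, in the sense that ``refinement overhead'' is read as the relative complex $Q$. If one prefers the mapping cone, part (a) gives the same conclusion without the splitting hypothesis.

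For the second sentence, which is interpretive, I would show that neither extra hypothesis is automatic, so the open question really is geometric. For the quasi-isomorphism condition, I would point to the remark citing \cite{CerejeirasKahlerLegatiuk2025}: an injective geomap need not preserve homology. A simple witness is the inclusion of a subcomplex with different homology, such as a circle's $1$-cells included into a disk. For split injectivity, an injective map $\Z\xrightarrow{2}\Z$ on some chain group already fails. In that case the quotient has torsion in a chain group and the Smith analysis of \Cref{thm:smith} does not apply to $Q$. Conversely, once both properties hold, the preceding paragraph leaves no remaining algebraic question. Propositions~\ref{prop:edge} and~\ref{prop:face} are concrete instances where both properties are verified by hand.

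The main point needing care is to separate what is proved from what is asserted. The unit-label conclusion is a formal consequence of \Cref{thm:generalrefinement}. The claim that ``the unresolved issue is geometric'' is supported only by the counterexamples above showing that the two hypotheses are independent of injectivity, and I would state it at that level rather than as a theorem.
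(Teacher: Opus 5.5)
Your proposal is correct and follows the paper, which gives no separate proof: the first sentence is an immediate application of \Cref{thm:generalrefinement}(b) to the relative complex, and the second sentence is interpretive, as you say. One small sharpening: your independence witnesses are injective maps rather than refinements, and for the splitting hypothesis a stronger witness is $2\cdot\id$ on $T_1(1)$, which is an injective quasi-isomorphism (both sides are acyclic) whose quotient $\Z/2\xleftarrow{1}\Z/2$ is not free.
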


\begin{remark}
The edge and polygonal-face refinements above are primitive split injections, and the join refinement below has the same property; thus \Cref{thm:generalrefinement} recovers their unit-only conclusion. The theorem does not assert that every refinement in $\TScript$ is homology preserving; the published definition supplies injectivity and boundary compatibility, not this stronger conclusion \cite{CerejeirasKahlerLegatiuk2025}.
\end{remark}

\subsection{Subdivision of a join coordinate}

For finite CW complexes $X,Y$, write their join as
\[
 X*Y=(X\times Y\times[0,1])/\sim,
\]
where the $Y$ coordinate is forgotten at $0$ and the $X$ coordinate at $1$.

\begin{theorem}[Midpoint subdivision of a join]
\label{thm:joinref}
Equip $X*Y$ with the standard product-join CW structure: the endpoint copies contribute the cells of $X$ and $Y$, and for every pair of open cells $\sigma\subset X$, $\tau\subset Y$ the image of $\sigma\times\tau\times(0,1)$ is an open join cell $\sigma*\tau$ of dimension $\dim\sigma+\dim\tau+1$. Let $(X*Y)'$ be the common subdivision obtained by inserting the midpoint $1/2$ in every such join cell, and let $s:C_*(X*Y)\to C_*((X*Y)')$ be the standard cellular subdivision chain map. Then the subdivision map can be chosen injective and
\[
 C_*((X*Y)')/sC_*(X*Y)\cong C_*(X\times Y)\otimes E_1.
\]
Consequently,
\begin{align*}
 \Mpol_{(X*Y)'}-\Mpol_{X*Y}&=(1+t)\Mpol_X\Mpol_Y,\\
 H_*\bigl(C_*((X*Y)')/sC_*(X*Y)\bigr)&=0,
\end{align*}
and every Smith label in the algebraic elementary decomposition of the relative complex is $[1]$.
\end{theorem}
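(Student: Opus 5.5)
We model the join chain complex as an algebraic join and read the subdivision off the interval factor. Writing $I=[0,1]$ with its standard structure (vertices $0,1$ and edge $\iota$), the product-join structure gives
\[
 C_*(X*Y)\cong C_*(X)\oplus C_*(Y)\oplus s\bigl(C_*(X)\otimes C_*(Y)\bigr),
\]
where the join cell $\sigma*\tau$ corresponds to $\sigma\otimes\iota\otimes\tau$ in $C_*(X\times I\times Y)$. Its boundary is computed in the product complex, and the collapse at the endpoints sends $\sigma\otimes\tau\otimes 0$ to $\sigma$ when $\dim\tau=0$ and to zero otherwise; the analogous rule holds at $1$. Equivalently, $C_*(X*Y)$ is the quotient of $C_*(X)\otimes C_*(I)\otimes C_*(Y)$ by the degenerate endpoint cells. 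The subdivided complex $(X*Y)'$ is obtained in exactly the same way from $I'=[0,\tfrac12]\cup[\tfrac12,1]$, which has a new vertex $m$ and edges $\iota_1,\iota_2$. Its new cells are the middle cells $\sigma\times\tau\times\{m\}$, which are not collapsed, together with the join cells split into two halves.

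The map $s$ is induced by the subdivision map $j:C_*(I)\to C_*(I')$, with $j(\iota)=\iota_1+\iota_2$ and $j$ the identity on the endpoints. Tensoring gives $\mathrm{id}\otimes j\otimes\mathrm{id}$, which respects the endpoint identifications because $j$ fixes the endpoints; hence it descends to a chain map $s$. By \Cref{prop:edge}, $j$ is split injective with $C_*(I')/jC_*(I)\cong E_1$, generated by $[\iota_1]$ and $[m]$ with $\partial[\iota_1]=\pm[m]$.

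Tensoring the split short exact sequence $0\to C_*(I)\to C_*(I')\to E_1\to0$ with $C_*(X)$ and $C_*(Y)$ keeps it split exact, since all three are free. The main point is to check that passing to the join quotients does not disturb this. The endpoint collapses involve only the vertices $0$ and $1$, which lie in the image of $j$. So the degenerate cells are identical on both sides, and the relative quotient is unaffected:
\[
 C_*((X*Y)')/sC_*(X*Y)\cong C_*(X)\otimes E_1\otimes C_*(Y)\cong C_*(X\times Y)\otimes E_1.
\]
For this step, choose bases explicitly. The join complex has basis the endpoint cells of $X$ and $Y$ together with the cells $\sigma*\tau$. The subdivided complex has basis the endpoint cells, the cells $\sigma\times\tau\times m$, and the half-cells $\sigma*_1\tau$ and $\sigma*_2\tau$. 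We have $s(\sigma*\tau)=\sigma*_1\tau+\sigma*_2\tau$, so $s$ is injective with a free complement spanned by the $\sigma*_1\tau$ and the middle cells. The remaining check is that the induced differential on the quotient equals $\partial_{X\times Y}\otimes 1\pm 1\otimes\partial_{E_1}$. The endpoint faces of $\sigma*_1\tau$ at $0$ lie in the image of $s$ and vanish in the quotient, and its face at $m$ is the middle cell. I expect this sign and face bookkeeping to be the main obstacle, though it is routine.

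The consequences then follow directly.
\begin{itemize}
 \item Counting cells gives $\Mpol_{(X*Y)'}-\Mpol_{X*Y}=\Mpol_{X\times Y}\Mpol_{E_1}=(1+t)\Mpol_X\Mpol_Y$, using \Cref{prop:semiring}.
 \item The complex $E_1$ is contractible over $\Z$, so $C_*(X\times Y)\otimes E_1$ is contractible: tensoring the contracting homotopy of $E_1$ with the identity gives a contracting homotopy. Hence its homology vanishes.
 \item The relative complex is finite, free and contractible. By \Cref{thm:generalrefinement}(a), or directly from \Cref{thm:smith} since there is no torsion, all its Smith labels are $[1]$.
\end{itemize}
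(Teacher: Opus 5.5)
Your proposal is correct and follows the same route as the paper. Over each product cell $\sigma\times\tau$, the relative generators are the half-cell $\sigma*_1\tau$ and the middle cell; they assemble into $C_*(X\times Y)\otimes E_1$, which is contractible and has only $[1]$ Smith labels. The paper only asserts this tensor-product identification, and your argument justifies it: you realize $s$ as induced by $\mathrm{id}\otimes j\otimes\mathrm{id}$ on $C_*(X)\otimes C_*(I)\otimes C_*(Y)$, and note that the endpoint-collapse kernel lies where $j$ is the identity (that kernel also identifies cells over distinct vertices, not only degenerate ones). This also makes the separate sign and face check you anticipate unnecessary.
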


\begin{proof}
For each product cell $\sigma\times\tau$ in $X\times Y$, midpoint subdivision replaces the corresponding open join cell by two cells of dimension $\dim\sigma+\dim\tau+1$ and inserts one midpoint cell of dimension $\dim\sigma+\dim\tau$. Relative to the unsubdivided structure, these generators form the total tensor product with the interval subdivision pair $E_1$. Since $E_1$ is contractible, so is the tensor product. Its cell polynomial is $(1+t)\Mpol_X\Mpol_Y$. A finite free contractible complex has only unit factors in its Smith decomposition.
\end{proof}

\begin{remark}
The theorem uses the explicit product subdivision of the join cylinder. It is not a claim that the old cells literally form a subcomplex of the new CW structure; the injection is the standard cellular subdivision chain map. The stated relative complex and polynomial overhead refer to this chosen product subdivision.
\end{remark}

\section{The same quantity can encode different spaces}
\label{sec:examples}

\begin{example}[Torus and Klein bottle]
Both $T^2$ and $K$ have a CW structure with one $0$-cell, two $1$-cells, and one $2$-cell, so
\[
 \Mpol_{T^2}=\Mpol_K=(1+t)^2.
\]
Over $\mathbb Q$,
\[
 \Ppol_{T^2}=1+2t+t^2,\qquad \Bpol_{T^2}=0,
\]
whereas the Klein-bottle attaching map has exponent-sum vector $(0,2)$ and
\[
 \Ppol_K=1+t,\qquad \Bpol_K=t,\qquad \Spol_K=[2]t.
\]
Thus
\[
 1+2t+t^2=(1+t)+(1+t)t.
\]
The unlabelled correction records rational cancellation; the label $2$ records $\Z/2$ torsion.
\end{example}

\begin{example}[Dependence on coefficients]
Over $\mathbb F_2$, the coefficient $2$ in the Klein-bottle boundary vanishes, so $\Ppol_{K,\mathbb F_2}=1+2t+t^2$. The integral label retains information lost by a field choice.
\end{example}

\section{Products and bundles}
\label{sec:products}

\begin{proposition}[Tensor-product obstruction]
\label{prop:obstruction}
If
\[
 C(S;\kk)\simeq C(A;\kk)\otimes C(B;\kk)
\]
by chain homotopy, then
\[
 \Ppol_{S,\kk}=\Ppol_{A,\kk}\Ppol_{B,\kk}.
\]
Therefore failure of Poincar\'e-polynomial factorization obstructs a chain-level realization of a proposed morphological product.
\end{proposition}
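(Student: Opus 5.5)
The plan is to show that homology over $\kk$, and hence the Poincaré polynomial, is invariant under chain homotopy equivalence. Then Proposition~\ref{prop:semiring} supplies the multiplicativity for the tensor product. The obstruction statement is just the contrapositive.

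\emph{First step.} Suppose $f:C(S;\kk)\to C(A;\kk)\otimes C(B;\kk)$ is a chain homotopy equivalence with homotopy inverse $g$. Chain-homotopic maps induce the same map on homology. Hence $H(g)H(f)=\id$ and $H(f)H(g)=\id$, so $H_k(f)$ is an isomorphism of $\kk$-vector spaces in every degree. The homology dimensions in each degree therefore agree, and
\[
 \Ppol_{S,\kk}=\Ppol_{C(A;\kk)\otimes C(B;\kk)}.
\]
A shortcut is available via \Cref{thm:fieldsplit}: homotopy equivalent complexes over a field have isomorphic homology summands, since the $E_k$ pieces are contractible. The direct argument above is simpler, though.

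\emph{Second step.} Compute the Poincaré polynomial of the tensor product. The complexes are finite-dimensional over a field, so the Künneth theorem gives
\[
 H_n\bigl(C(A;\kk)\otimes C(B;\kk)\bigr)\cong\bigoplus_{i+j=n}H_i(A;\kk)\otimes_\kk H_j(B;\kk).
\]
There is no Tor term, because $\kk$ is a field. Taking dimensions gives $\sum_{i+j=n}\beta_i(A)\beta_j(B)$, which is exactly the coefficient of $t^n$ in $\Ppol_{A,\kk}\Ppol_{B,\kk}$. This is the same identity already recorded in Proposition~\ref{prop:semiring} as $\Ppol_{S\otimes T}=\Ppol_S\Ppol_T$.

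\emph{Final step.} Combining the two steps yields $\Ppol_{S,\kk}=\Ppol_{A,\kk}\Ppol_{B,\kk}$. Contrapositively, if $\Ppol_{S,\kk}$ does not factor as $\Ppol_{A,\kk}\Ppol_{B,\kk}$, then no chain homotopy equivalence of the stated form exists. In that case the proposed morphological product $S = A\cdot B$ has no chain-level realization over $\kk$.

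There is no real obstacle here. The only point needing care is that the tensor product must be the total complex with the Koszul sign convention, so that it is a genuine chain complex and Künneth applies. This is precisely the convention fixed in Proposition~\ref{prop:semiring}.
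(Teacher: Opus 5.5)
Your proof is correct and is the argument the paper relies on. The paper states this proposition without a separate proof, and it follows from homotopy invariance of homology combined with the field Künneth formula already recorded in Proposition~\ref{prop:semiring}, which is exactly your two-step argument.
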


In particular, the Klein bottle is not a rational chain product of two circles although its cell polynomial is $(1+t)^2$.

\section{Division as an action-certified partial operation}
\label{sec:division}

Sommen's notation uses division for projective spaces, homogeneous spaces, and the reversal of product-like formulas. These uses cannot be modeled by inversion in the cell-count semiring. The missing datum is usually an action.

\begin{definition}[Action-certified division]
Let $G$ be a topological group acting continuously and properly on a space $X$ through an action $\rho:G\curvearrowright X$. Define
\[
  \operatorname{Div}(X;G,\rho):=X/G.
\]
If the action is free, we call this a \emph{principal division certificate}. For a closed subgroup $H\subset G$, homogeneous division
\[
  G{/}H
\]
means the quotient for the specified right action of $H$ on $G$, so the inclusion $H\hookrightarrow G$ is part of the data.
\end{definition}

The notation $X/G$ is therefore shorthand for a triple $(X,G,\rho)$, not a binary algebraic operation on quantities. If the action is not free, the ordinary quotient $X/G$ and the homotopy quotient $EG\times_GX$ are different constructions and should not be identified.

\begin{proposition}[Finite cellular quotients]
\label{prop:finitequotient}
Let a finite group $\Gamma$ act freely and cellularly on a finite CW complex $X$, with cells permuted without inversions. Then $X/\Gamma$ is a finite CW complex and
\[
 C_*(X/\Gamma;\Z)\cong C_*(X;\Z)_\Gamma,
\]
where the right-hand side denotes coinvariants of the cellular chain complex.
\end{proposition}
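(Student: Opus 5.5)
The plan is to prove the two assertions of \Cref{prop:finitequotient} in turn: first the CW structure on $X/\Gamma$, then the identification of cellular chains with coinvariants. Both rest on one observation. Because $\Gamma$ acts freely and permutes cells without inversions, the only element fixing a cell $e$ setwise is the identity, and then it fixes $e$ pointwise. So $\Gamma$ acts freely on the set of $k$-cells, in each dimension $k$.

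For the CW structure, let $\pi:X\to X/\Gamma$ be the quotient map. I would take the images $\pi(e)$ of the open cells as the cells of $X/\Gamma$.
\begin{enumerate}[label=\textup{(\arabic*)}]
 \item Two cells have the same image exactly when they lie in the same $\Gamma$-orbit.
 \item Since $\Gamma$ is finite and acts freely, $\pi$ is a finite covering map. Hence $\pi$ restricts to a homeomorphism from each open cell onto its image, because distinct translates of an open cell are disjoint.
 \item Composing a characteristic map $\Phi_e:D^k\to X$ with $\pi$ gives a characteristic map for $\pi(e)$.
 \item The skeleta of $X$ are $\Gamma$-stable, and $\pi(X^{k})=(X/\Gamma)^{k}$. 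A quotient of a finite complex by a finite group is compact Hausdorff, so finiteness of the new complex is immediate.
\end{enumerate}
This part is standard covering-space bookkeeping.

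For the chain-level statement, I would use the same covering map.
\begin{enumerate}[label=\textup{(\arabic*)}]
 \item Because $\pi$ is cellular, it induces $\pi_\#:C_*(X)\to C_*(X/\Gamma)$. This map is invariant under $\Gamma$, since $\pi\circ g=\pi$.
 \item Therefore $\pi_\#$ factors through the coinvariants $C_*(X)_\Gamma=C_*(X)/\langle gc-c\rangle$.
 \item In each degree, $C_k(X)$ is a permutation module on a free $\Gamma$-set of cells, so $C_k(X)\cong\Z[\Gamma]^{\oplus n_k}$. Its coinvariants are then free abelian with one basis element per orbit of $k$-cells.
 \item Each orbit of $k$-cells corresponds to exactly one $k$-cell of $X/\Gamma$. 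So the factored map sends a basis to a basis, up to a sign coming from the choice of orientations.
 \item To remove the sign, I would orient $\pi(e)$ by pushing forward the orientation of $e$. The absence of inversions makes this well defined.
\end{enumerate}
This gives a degreewise isomorphism $C_*(X)_\Gamma\to C_*(X/\Gamma)$. It commutes with the boundary maps because $\pi_\#$ is a chain map, and coinvariants form a functor on chain complexes.

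The main obstacle is checking that $\pi_\#$ really is a chain map compatible with these orientations. Equivalently, one must verify the incidence formula
\[
 [\pi(e):\pi(f)]=\sum_{g\in\Gamma}[e:gf].
\]
I would prove this with the degree description of cellular incidence numbers. Because $\pi$ is a covering, the attaching sphere of $e$ maps into the $(k-1)$-skeleton of $X/\Gamma$. The degree of the collapse onto $\pi(f)$ then equals the sum of the degrees of the collapses onto the lifts $gf$, since $\pi$ is a local homeomorphism near each lift.

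Alternatively, I would identify $C_k$ with relative homology $H_k(X^k,X^{k-1})$. Then I would invoke the transfer isomorphism $H_k(X^k,X^{k-1})_\Gamma\cong H_k((X/\Gamma)^k,(X/\Gamma)^{k-1})$ for free actions on relative CW pairs. This isomorphism follows from the fact that these relative groups are free $\Z[\Gamma]$-modules on the cells. Naturality of the connecting map then gives compatibility with the boundary operators directly.
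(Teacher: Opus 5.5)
Your proposal is correct and takes essentially the same route as the paper: the quotient cells are the $\Gamma$-orbits of cells, orientations transported along each (free) orbit identify $C_k(X/\Gamma)$ with the coinvariants of the permutation module $C_k(X)$, and the boundary descends because $\pi_\#$ is a $\Gamma$-invariant chain map. You also supply the covering-space verification of the CW structure that the paper leaves implicit; the step you flag as the main obstacle is in fact automatic, since $\pi_\#$ is a chain map for any cellular map and $\pi_\#[e]=[\pi(e)]$ once $\pi\circ\Phi_e$ is taken as the characteristic map of $\pi(e)$.
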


\begin{proof}
The cells of the quotient are the $\Gamma$-orbits of cells of $X$. Choosing one orientation in each orbit identifies the free chain group of the quotient with the coinvariants, and the cellular boundary descends because the action is cellular. This is the free-action specialization of the cellular quotient framework used for finite $G$-CW complexes; compare \cite{Knudson2004}.
\end{proof}

\begin{example}[Projective divisions]
The familiar formulas
\[
 \mathbb{RP}^n=S^n/(\mathbb Z/2),\qquad
 \mathbb{CP}^n=S^{2n+1}/S^1,\qquad
 \mathbb{HP}^n=S^{4n+3}/Sp(1)
\]
are valid divisions because the antipodal and Hopf actions are specified and free. This is the additional datum suppressed when one writes only a numerator and denominator.
\end{example}

\begin{example}[The same numerator and denominator need not determine the quotient]
Different free actions of the same finite cyclic group on an odd sphere produce different lens spaces. Thus a formal expression such as $S^{2m-1}/(\mathbb Z/p)$ does not determine a space until the action is supplied.
\end{example}

\begin{proposition}[When scalar division happens to work]
\label{prop:divisioncriterion}
Let $F\to E\to B$ be a fibration of finite $\kk$-homology type. If the monodromy on $H^*(F;\kk)$ is trivial and the Serre spectral sequence collapses at $E_2$, then
\[
 \Ppol_{E,\kk}(t)=\Ppol_{B,\kk}(t)\Ppol_{F,\kk}(t).
\]
Only under such additional hypotheses can one recover the Poincar\'e polynomial of the base by polynomial division.
\end{proposition}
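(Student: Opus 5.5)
The plan is to read off $E_\infty$ from $E_2$ and then turn the filtration of $H^*(E;\kk)$ into a product of Poincar\'e polynomials. Work in cohomology with field coefficients; over a field the graded dimensions of homology and cohomology agree by universal coefficients, so the conclusion may be stated for $\Ppol$. I assume $B$ is path-connected; otherwise one applies the argument to each component and adds, using \Cref{prop:semiring}.

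First, the monodromy hypothesis says that $\pi_1(B)$ acts trivially on $H^*(F;\kk)$. Hence the local coefficient system $\mathcal H^q(F;\kk)$ is constant, and
\[
 E_2^{p,q}=H^p(B;\mathcal H^q(F;\kk))\cong H^p(B;H^q(F;\kk)).
\]
Since $\kk$ is a field and $H^q(F;\kk)$ is a finite-dimensional $\kk$-vector space, the universal coefficient theorem gives
\[
 H^p(B;H^q(F;\kk))\cong H^p(B;\kk)\otimes_\kk H^q(F;\kk),
\]
with no Ext terms. Consequently $\dim E_2^{p,q}=\beta_p(B)\beta_q(F)$. The finite-type hypothesis guarantees that these dimensions are finite and that only finitely many of them are nonzero, so the total polynomial
\[
 \sum_{p,q}\dim E_2^{p,q}\,t^{p+q}
\]
equals $\Ppol_{B,\kk}(t)\Ppol_{F,\kk}(t)$.

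Next comes collapse and convergence. Collapse at $E_2$ gives $E_\infty=E_2$. The Serre spectral sequence converges to $H^*(E;\kk)$ through a finite filtration of each $H^n(E;\kk)$: it is bounded because the bidegrees are first-quadrant. Its associated graded pieces are $E_\infty^{p,n-p}$. Over a field every filtration splits, so
\[
 \dim H^n(E;\kk)=\sum_{p+q=n}\dim E_\infty^{p,q}.
\]
Summing over $n$ with weights $t^n$ yields the claimed identity.

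The closing sentence, about recovering $\Ppol_B$ by division, then follows formally. Since $\Ppol_{F,\kk}(0)=\beta_0(F)\neq0$ for nonempty $F$, $\Ppol_{F,\kk}$ is a nonzero polynomial, and $\kk$-free polynomial rings over $\Z$ are integral domains. Therefore $\Ppol_{B,\kk}=\Ppol_{E,\kk}/\Ppol_{F,\kk}$ is uniquely determined. The main subtlety is the first step, where one must check that trivial monodromy really means the coefficient system is untwisted; this is where the path-connectedness of $B$ is used. Everything else is standard dimension counting.
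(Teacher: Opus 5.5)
Your argument for the displayed identity is correct and is the paper's proof: trivial monodromy gives $E_2^{p,q}\cong H^p(B;\kk)\otimes H^q(F;\kk)$, and $E_2$-collapse together with the splitting of the finite filtration over a field multiplies the graded dimensions. You spell out the universal-coefficient and convergence steps that the paper compresses into two sentences.

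The one thing to fix is your last paragraph. You show that $\Z[t]$ is a domain, so that under the hypotheses $\Ppol_{B,\kk}$ is the unique quotient $\Ppol_{E,\kk}/\Ppol_{F,\kk}$. That is the converse of what ``only'' asserts. The literal claim is that the product identity forces the hypotheses.

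The paper's proof does not treat that sentence, and afterwards even calls the hypotheses ``sufficient, not necessary''. Your own setup nevertheless proves it for path-connected $B$:
\begin{enumerate}[label=\textup{(\roman*)}]
 \item Suppose the monodromy is nontrivial, and let $q_0$ be the least degree in which $\pi_1(B)$ acts nontrivially on $H^{q_0}(F;\kk)$. All $E_2$-terms of total degree $q_0$ with $p\ge1$ sit in untwisted rows, so they have the product dimension. But $E_2^{0,q_0}=H^{q_0}(F;\kk)^{\pi_1(B)}$ is a proper subspace of $H^{q_0}(F;\kk)$. Hence $\beta_{q_0}(E)$ is strictly smaller than the coefficient of $t^{q_0}$ in $\Ppol_{B,\kk}\Ppol_{F,\kk}$.
 \item Suppose the monodromy is trivial but some $d_r\neq0$. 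The first nonzero differential strictly lowers the dimension in its source total degree, and later pages can only lower it further.
\end{enumerate}
So for path-connected $B$ the product identity holds exactly when both hypotheses hold. You should replace ``follows formally'' by this argument, or say explicitly that you prove only the sufficiency reading of the final sentence.
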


\begin{proof}
Trivial monodromy gives $E_2^{p,q}=H^p(B;\kk)\otimes H^q(F;\kk)$. If the spectral sequence collapses, the total graded dimensions are therefore the product of the graded dimensions of base and fibre.
\end{proof}

The hypotheses are sufficient, not necessary. Their role is to emphasize that a quotient or fibre-bundle certificate is geometric data, while scalar factorization is a secondary consequence that may or may not occur.

\begin{remark}[A quotient for which polynomial division fails]
For the Hopf fibration $S^1\to S^3\to S^2$,
\[
 \Ppol_{S^3}(t)=1+t^3,
 \qquad
 \Ppol_{S^1}(t)=1+t,
\]
and $(1+t^3)/(1+t)$ is not $1+t^2$. The geometric quotient exists, but the Serre differential carries information omitted by scalar division. Accordingly, division is defined here first by quotient data and only, when a collapse theorem permits it, by division of scalar invariants.
\end{remark}

\begin{remark}[Phantom divisions]
This viewpoint gives a precise status to Sommen's ``phantom'' quotients. An expression with no supplied group action, homogeneous inclusion, or other quotient certificate remains a formal morphological expression. It may admit a geometric synthesis, but it is not assigned an ordinary quotient merely because an algebraic denominator is present.
\end{remark}

\section{Mapping tori and the exact monodromy defect}
\label{sec:mapping}

Let $f:X\to X$ be a cellular self-map of a finite CW complex and
\[
 T_f=X\times[0,1]/(x,1)\sim(f(x),0)
\]
its mapping torus.

\begin{definition}[Fixed and moved homology]
For a field $\kk$, define
\begin{align*}
 \nu_j(f;\kk)&=\dim_\kk\ker(1-f_*:H_j(X;\kk)\to H_j(X;\kk)),\\
 \Fpol_{f,\kk}(t)&=\sum_j\nu_j(f;\kk)t^j,\\
 \Dpol_{f,\kk}(t)&=\Ppol_{X,\kk}(t)-\Fpol_{f,\kk}(t)
 =\sum_j\rank(1-f_*|H_j)t^j.
\end{align*}
\end{definition}

\begin{theorem}[Mapping-torus formula]
\label{thm:mapping}
For every field $\kk$,
\[
 \boxed{\Ppol_{T_f,\kk}(t)=(1+t)\Fpol_{f,\kk}(t).}
\]
Equivalently,
\[
 (1+t)\Ppol_{X,\kk}-\Ppol_{T_f,\kk}=(1+t)\Dpol_{f,\kk}.
\]
\end{theorem}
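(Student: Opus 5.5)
The plan is to use the Wang sequence of the mapping torus, obtained from a chain-level short exact sequence, and then count dimensions. The mapping torus $T_f$ has a CW structure with the cells of $X$ (the slice at $0$) together with the cells $\sigma\times(0,1)$ for each cell $\sigma$ of $X$. Cellular approximation is not needed, since $f$ is already cellular. In this structure the cellular chain complex is the mapping cone of $1-f_\#:C_*(X;\kk)\to C_*(X;\kk)$, up to a degree shift and signs. For the cell $\sigma\times(0,1)$, the boundary is $\pm\bigl(\sigma-f_\#\sigma\bigr)$ plus the term $(\partial\sigma)\times(0,1)$. The inclusion of $X$ as a subcomplex then gives a short exact sequence
\[
0\longrightarrow C_*(X;\kk)\longrightarrow C_*(T_f;\kk)\longrightarrow C_{*-1}(X;\kk)\longrightarrow 0,
\]
and its connecting homomorphism is $\pm(1-f_*)$.

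From this we get the long exact (Wang) sequence
\[
\cdots\to H_j(X)\xrightarrow{1-f_*}H_j(X)\to H_j(T_f)\to H_{j-1}(X)\xrightarrow{1-f_*}H_{j-1}(X)\to\cdots
\]
over $\kk$. Exactness splits it into short exact sequences
\[
0\to\operatorname{coker}(1-f_*|H_j)\to H_j(T_f)\to\ker(1-f_*|H_{j-1})\to0.
\]
Since $1-f_*$ is an endomorphism of a finite-dimensional space, $\dim\operatorname{coker}=\dim\ker=\nu_j$. Hence $\dim H_j(T_f;\kk)=\nu_j+\nu_{j-1}$. Summing over $j$ with weight $t^j$ gives $\Ppol_{T_f,\kk}=(1+t)\Fpol_{f,\kk}$. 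Substituting $\Fpol=\Ppol_X-\Dpol$ gives the equivalent form.

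The only step needing care is the identification of the connecting map with $1-f_*$. I would make this precise by writing the cellular boundary of $\sigma\times(0,1)$ via the product-cell orientation convention and checking $\partial^2=0$. The sign is irrelevant anyway, since kernels and ranks of $\pm(1-f_*)$ coincide. If one prefers to avoid the CW bookkeeping, the same short exact sequence can be obtained from the Mayer--Vietoris sequence for $T_f$ as a double mapping cylinder of $\id,f:X\sqcup X\to X$. Here the same reasoning about dimensions of kernels and cokernels of endomorphisms finishes the proof.
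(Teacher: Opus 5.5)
Your proposal is correct and follows essentially the same route as the paper. Both arguments identify the cellular chains of $T_f$ with the algebraic mapping cone of $1-f_\#$, pass to the resulting long exact (Wang) sequence, and split it into short exact sequences $0\to\operatorname{coker}(1-f_*|H_j)\to H_j(T_f)\to\ker(1-f_*|H_{j-1})\to0$; the equality $\dim\ker=\dim\operatorname{coker}$ for an endomorphism then gives $\beta_j(T_f)=\nu_j+\nu_{j-1}$. Your explicit product cells $\sigma\times(0,1)$ and your computation of the connecting map only spell out the cellular bookkeeping that the paper cites as standard.
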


\begin{proof}
Because $f$ is cellular, the standard cellular mapping-torus model has chain complex chain-homotopy equivalent to the algebraic mapping cone
\[
 \operatorname{Cone}\bigl(1-f_\#:C_*^{\mathrm{cell}}(X;\kk)\longrightarrow C_*^{\mathrm{cell}}(X;\kk)\bigr).
\]
Equivalently, the mapping torus is the homotopy coequalizer of $\id_X$ and $f$, and cellular chains give the corresponding cone model. The long exact sequence of a mapping cone \cite{Weibel1994,Hatcher2002} therefore yields short exact sequences of vector spaces
\[
 0\to\operatorname{coker}(1-f_*|H_j(X;\kk))\to H_j(T_f;\kk)
 \to\ker(1-f_*|H_{j-1}(X;\kk))\to0.
\]
For an endomorphism of a finite-dimensional vector space, kernel and cokernel have equal dimension. Hence $\beta_j(T_f)=\nu_j+\nu_{j-1}$. When $f$ is a homeomorphism this exact sequence is the usual Wang sequence for the bundle over $S^1$; the cone proof shows that the displayed formula remains valid for every cellular self-map.
\end{proof}

\begin{corollary}[Exact product criterion over the circle]
\[
 \Ppol_{T_f,\kk}=(1+t)\Ppol_{X,\kk}
\]
if and only if $f_*=\id$ on $H_*(X;\kk)$.
\end{corollary}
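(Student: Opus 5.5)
The corollary follows directly from \Cref{thm:mapping}, which gives $\Ppol_{T_f,\kk}=(1+t)\Fpol_{f,\kk}$. The plan is to reduce the desired identity to $\Fpol_{f,\kk}=\Ppol_{X,\kk}$, and then to read that identity off degree by degree.

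\textbf{Step 1 (cancel $1+t$).} The polynomial ring $\kk[t]$, or already $\Z[t]$, is an integral domain and $1+t\neq0$. Hence $(1+t)\Fpol_{f,\kk}=(1+t)\Ppol_{X,\kk}$ holds if and only if $\Fpol_{f,\kk}=\Ppol_{X,\kk}$. Equivalently, by the second form of \Cref{thm:mapping}, the identity $\Ppol_{T_f,\kk}=(1+t)\Ppol_{X,\kk}$ is the statement $(1+t)\Dpol_{f,\kk}=0$, that is, $\Dpol_{f,\kk}=0$.

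\textbf{Step 2 (read off coefficients).} The condition $\Dpol_{f,\kk}=0$ means $\rank(1-f_*|H_j(X;\kk))=0$ for every $j$. Each $H_j(X;\kk)$ is finite-dimensional because $X$ is a finite CW complex. An endomorphism has rank zero exactly when it is zero, so the condition says $1-f_*=0$ on each $H_j$, i.e. $f_*=\id$ on $H_*(X;\kk)$. Since coefficients of a polynomial determine it, vanishing in each degree is equivalent to vanishing of $\Dpol_{f,\kk}$. This gives both directions at once.

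There is no real obstacle here. The only point needing care is that the comparison is over the same field $\kk$ throughout: the criterion concerns $f_*$ on $H_*(X;\kk)$, not on integral homology. It is also worth noting, rather than needing to prove, that $\Dpol_{f,\kk}$ has nonnegative coefficients. Consequently, in general $\Ppol_{T_f,\kk}\le(1+t)\Ppol_{X,\kk}$ coefficientwise, with equality precisely under the stated condition.
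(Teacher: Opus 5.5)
Your proposal is correct and is essentially the argument the paper intends. The paper gives no separate proof; the corollary is read off from the second form of \Cref{thm:mapping}, since $(1+t)\Dpol_{f,\kk}=0$ holds exactly when every $\rank(1-f_*|H_j(X;\kk))$ vanishes, i.e.\ when $f_*=\id$ on $H_*(X;\kk)$.
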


\begin{example}[Torus and Klein bottle]
For $X=S^1$, identity monodromy gives $T_f=T^2$ and $(1+t)^2$. Reflection acts by $-1$ on $H_1(S^1;\kk)$; if $\operatorname{char}\kk\ne2$, then $\Fpol_f=1$, $\Dpol_f=t$, and $\Ppol_{T_f}=1+t$.
\end{example}

\begin{corollary}[Torus mapping tori]
Let $A\in GL(m,\Z)$ induce $f_A:T^m\to T^m$. Then
\[
 \Ppol_{T_{f_A},\kk}(t)=(1+t)\sum_{j=0}^m
 \dim_\kk\ker(I-\Lambda^jA)\,t^j.
\]
\end{corollary}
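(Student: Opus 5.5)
The plan is to apply \Cref{thm:mapping} with $X=T^m$ and $f=f_A$, so the whole task reduces to computing $\nu_j(f_A;\kk)$. First fix a cellular model. Take $T^m=(S^1)^m$ with the product CW structure. A general $A\in GL(m,\Z)$ is not cellular for this structure. Since \Cref{thm:mapping} is stated for cellular self-maps, I would use cellular approximation: $f_A$ is homotopic to a cellular map $g$. The mapping tori $T_{f_A}$ and $T_g$ are homotopy equivalent because homotopic maps have homotopy-equivalent mapping tori, and $g_*=(f_A)_*$ on homology. The formula of \Cref{thm:mapping}, applied to $g$, then gives $\Ppol_{T_{f_A},\kk}=(1+t)\Fpol_{g,\kk}=(1+t)\Fpol_{f_A,\kk}$.

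Second, identify the induced map on homology. By the K\"unneth theorem (as in \Cref{prop:semiring}, iterated), we have $H_*(T^m;\Z)\cong\Lambda^*\Z^m$ as a ring under the Pontryagin product. Here $H_1(T^m;\Z)=\Z^m$ and $(f_A)_*=A$ on $H_1$, because $f_A$ is the linear map $\R^m/\Z^m\to\R^m/\Z^m$. Since $f_A$ is a group homomorphism, $(f_A)_*$ is a ring map for the Pontryagin product, and $H_*$ is generated by $H_1$. Hence $(f_A)_*=\Lambda^jA$ on $H_j\cong\Lambda^j\Z^m$. Tensoring with $\kk$ (homology is free, so no Tor terms arise) gives $(f_A)_*=\Lambda^jA\otimes\kk$ on $H_j(T^m;\kk)\cong\Lambda^j\kk^m$.

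Third, substitute into the definition: $\nu_j(f_A;\kk)=\dim_\kk\ker(I-\Lambda^jA)$, with the matrix read over $\kk$. Summing over $0\le j\le m$ then yields the displayed formula.

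The main obstacle is justifying $(f_A)_*=\Lambda^jA$ rigorously. I would handle this through the Pontryagin ring structure. An alternative is cohomology with cup products: $H^*(T^m)\cong\Lambda^*(\Z^m)^\vee$, $f_A^*=\Lambda^jA^{T}$ there, and dualizing recovers $\Lambda^jA$ on homology. The remaining check, invariance of the mapping torus under homotopy of $f$, is routine.
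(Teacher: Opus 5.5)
Your proposal is correct and follows the route the paper intends. The paper gives no separate proof; the corollary is a direct application of \Cref{thm:mapping} with $X=T^m$, using $H_j(T^m;\kk)\cong\Lambda^j\kk^m$ and $(f_A)_*=\Lambda^jA$, exactly as you do. Your cellular-approximation step is a worthwhile addition: replacing $f_A$ by a homotopic cellular $g$ with $T_g\simeq T_{f_A}$ and $g_*=(f_A)_*$ supplies the cellularity hypothesis of \Cref{thm:mapping}, which a general $f_A$ does not satisfy for the product structure and which the paper passes over silently.
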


\begin{example}[Inversion]
For $A=-I_m$ and $\operatorname{char}\kk\ne2$, only even exterior powers are fixed, so
\[
 \Ppol_{T_{f_A}}(t)=\frac{1+t}{2}\big((1+t)^m+(1-t)^m\big).
\]
\end{example}

\section{Finite diagrammatic semantics via homotopy colimits}
\label{sec:hocolim}

Ordinary addition and multiplication do not retain the maps in a gluing diagram. We therefore use diagrams of finite chain complexes rather than scalar expressions. We work in the category $\Ch^b_{\mathrm{fr}}(\Z)$ of bounded chain complexes of finitely generated free abelian groups.

\begin{definition}[Diagrammatic script]
A finite diagrammatic script is a functor
\[
 D:\mathcal I\longrightarrow \Ch^b_{\mathrm{fr}}(\Z),
\]
where $\mathcal I$ is a finite direct category (no nonidentity endomorphisms or oriented cycles); in particular, one may take a finite poset. The objects $D(i)$ are script complexes and the arrows are chain maps encoding incidence, projection, attachment, or refinement data.
\end{definition}

\begin{definition}[Bar realization]
The simplicial replacement of $D$ is the simplicial chain complex
\[
 B_q(*,\mathcal I,D)=
 \bigoplus_{i_0\to i_1\to\cdots\to i_q}D(i_0),
\]
with the standard bar faces and degeneracies. Let $N B_\bullet(*,\mathcal I,D)$ be its normalized complex in the simplicial direction and define
\[
 \mathfrak C(D)=\operatorname{Tot}N B_\bullet(*,\mathcal I,D).
\]
We call $\mathfrak C(D)$ the derived script realization of the diagram.
\end{definition}

The same normalized bar totalization will also be used, without the finiteness-of-generators restriction, for diagrams of ordinary chain complexes such as singular chains. In the finite indexing categories considered here the simplicial direction is finite, so this comparison introduces no convergence issue.

Define the raw bar inventory by
\[
 \Mpol_{\mathrm{bar}}(D;t):=\Mpol_{\mathfrak C(D)}(t).
\]
For a finite poset, nondegenerate simplices are strict chains, so
\begin{equation}
\label{eq:barbill}
 \Mpol_{\mathrm{bar}}(D;t)
 =\sum_{q\ge0}t^q
   \sum_{i_0<i_1<\ldots<i_q}\Mpol_{D(i_0)}(t).
\end{equation}
This polynomial depends on the chosen bar model. The chain-homotopy or quasi-isomorphism type of $\mathfrak C(D)$ is the invariant object.

\begin{theorem}[Finite derived-diagram realization]
\label{thm:barrealization}
Let $D:\mathcal I\to\Ch^b_{\mathrm{fr}}(\Z)$ be a finite diagrammatic script.
\begin{enumerate}[label=\textup{(\roman*)}]
 \item The bar complex $\mathfrak C(D)$ models the homotopy colimit of $D$ in chain complexes.
 \item An objectwise quasi-isomorphism of diagrams induces a quasi-isomorphism of their derived script realizations.
 \item Let $X:\mathcal I\to\mathbf{CW}$ be a finite diagram of CW complexes and cellular maps, and let
 \[
   S(i)=C_*^{\mathrm{sing}}(X_i;\Z).
 \]
 Then the normalized bar totalization of the singular-chain diagram has a natural quasi-isomorphism
 \[
   \operatorname{Tot}N B_\bullet(*,\mathcal I,S)
   \simeq C_*^{\mathrm{sing}}\bigl(\operatorname{hocolim}_{\mathcal I}X;\Z\bigr),
 \]
 where the topological homotopy colimit is represented by the Bousfield--Kan simplicial replacement. Consequently, if a finite script diagram $D$ is equipped with a natural transformation
 \[
   \eta:D\Longrightarrow S
 \]
 whose components are quasi-isomorphisms, then
 \[
   \mathfrak C(D)\simeq
   C_*^{\mathrm{sing}}\bigl(\operatorname{hocolim}_{\mathcal I}X;\Z\bigr).
 \]
 In particular, cellular or script chain models may be used whenever such a compatible objectwise comparison is supplied.
\end{enumerate}
Consequently,
\begin{equation}
\label{eq:barmorse}
 \Mpol_{\mathrm{bar}}(D;t)
 =\Ppol_{\mathfrak C(D),\kk}(t)
 +(1+t)\Bpol_{\mathfrak C(D),\kk}(t)
\end{equation}
for every field $\kk$.
\end{theorem}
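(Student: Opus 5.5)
The plan is to treat the three parts as instances of standard homotopy-colimit machinery, and then read off \eqref{eq:barmorse} from \Cref{thm:normal}. First I will check that $\mathfrak C(D)$ is again a bounded complex of finitely generated free abelian groups. For a finite direct category only finitely many nondegenerate strings $i_0\to\cdots\to i_q$ exist, and each such string has $q\le|\mathrm{Ob}\,\mathcal I|-1$. Hence $NB_q$ vanishes for large $q$ and each $NB_q$ is a finite direct sum of the $D(i_0)$. Totalizing therefore gives a bounded finite free complex, and \eqref{eq:barbill} follows by counting summands.

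For (i), I would identify $\mathfrak C(D)$ with the derived tensor product $\Z[N(-\downarrow\mathcal I)]\otimes^{\mathbf L}_{\mathcal I}D$. Concretely, the bar construction $B(*,\mathcal I,D)$ is $B(*,\mathcal I,\mathcal I)\otimes_{\mathcal I}D$. Here $B(*,\mathcal I,\mathcal I)(i)$ is the chain complex of the nerve of the under-category $i\downarrow\mathcal I$, and that nerve is contractible because it has an initial object. This gives a projective resolution of the constant functor, with projectivity in the Reedy or projective model structure on $\mathcal I$-diagrams. So $\mathfrak C(D)$ computes $\operatorname{colim}^{\mathbf L}=\operatorname{hocolim}$, and Dold--Kan identifies normalized totalization with the unnormalized one up to quasi-isomorphism.

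For (ii), I would filter $\mathfrak C(D)$ by simplicial degree $q$. This filtration is finite, and its associated graded pieces are finite direct sums of the $D(i_0)$. An objectwise quasi-isomorphism induces a quasi-isomorphism on each graded piece. Induction on the finite filtration, using the five lemma on the short exact sequences of filtration quotients, then gives the claim. Equivalently, the spectral sequence of the double complex has $E^1$ mapped isomorphically and converges because the filtration is bounded.

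For (iii), I would apply the singular chain functor levelwise to the Bousfield--Kan simplicial space $\coprod_{i_0\to\cdots\to i_q}X_{i_0}$. Singular chains commute with disjoint unions. The Eilenberg--Zilber theorem for bisimplicial sets, in its Dold--Puppe/realization form, then gives
\[
 C_*^{\mathrm{sing}}(|{\rm srep}\,X|)\simeq\operatorname{Tot}N\bigl(C_*^{\mathrm{sing}}({\rm srep}_\bullet X)\bigr)=\operatorname{Tot}NB_\bullet(*,\mathcal I,S).
\]
This is the main obstacle. Naturality must be tracked so that the face maps match the bar faces, and the geometric realization has to be the fat/Reedy-cofibrant one so that it genuinely represents the homotopy colimit. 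I would first try citing the standard comparison for realizations of simplicial spaces (Segal, or Dugger's primer on hocolims). Composing with (ii) applied to $\eta$ then gives the stated consequence $\mathfrak C(D)\simeq C_*^{\mathrm{sing}}(\operatorname{hocolim}X)$.

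Finally, \eqref{eq:barmorse} is \Cref{thm:normal}, in the form valid for bounded finite free complexes, applied to $\mathfrak C(D)$. The first step guarantees that $\mathfrak C(D)$ is such a complex.
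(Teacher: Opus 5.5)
Your proposal is correct and follows the same route as the paper. Part (i) rests on the standard fact that the Bousfield--Kan bar construction computes the homotopy colimit, which you sketch via the free resolution $C_*(N(-\downarrow\mathcal I))$ of the constant functor where the paper simply cites. Part (ii) uses the finite simplicial-degree filtration, part (iii) uses levelwise singular chains plus Eilenberg--Zilber followed by (ii) applied to $\eta$, and \eqref{eq:barmorse} is \Cref{thm:normal} applied to $\mathfrak C(D)$. Your explicit check that $\mathfrak C(D)$ is bounded and finite free, which is needed before invoking \Cref{thm:normal}, is a step the paper leaves implicit.
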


\begin{proof}
The Bousfield--Kan bar construction computes homotopy colimits of chain-complex diagrams; see \cite{Arakawa2026,DuggerHocolim} for this bar model, proving \textup{(i)}. Since the indexing category is finite and the script complexes are bounded, filtering the normalized bar totalization by simplicial degree shows that an objectwise quasi-isomorphism induces a quasi-isomorphism of total complexes, proving \textup{(ii)}. For \textup{(iii)}, apply singular chains degreewise to the topological simplicial replacement. The Eilenberg--Zilber comparison identifies the normalized singular chains of its realization, up to quasi-isomorphism, with the normalized bar totalization of the singular-chain diagram. If a compatible objectwise quasi-isomorphism $\eta:D\Rightarrow S$ is supplied, \textup{(ii)} gives the second comparison. Equation \eqref{eq:barmorse} is \Cref{thm:normal} applied to $\mathfrak C(D)$.
\end{proof}

\begin{remark}[Why compatibility is part of the data]
Part~\textup{(iii)} is stated first for singular chains. Although cellular and singular homology agree objectwise for CW complexes, an arbitrary CW diagram need not carry a preferred strict natural chain map from the chosen cellular complexes to singular chains. A cellular or script model is therefore used only after a compatible objectwise quasi-isomorphism has been specified. In particular, the maps in the diagram are part of the input and cannot be recovered from the scalar inventories.
\end{remark}

\begin{corollary}[Diagrammatic Euler formula]
For a finite poset diagram,
\[
 \chi\bigl(\mathfrak C(D)\bigr)
 =\sum_{q\ge0}(-1)^q
   \sum_{i_0<i_1<\ldots<i_q}\chi\bigl(D(i_0)\bigr).
\]
\end{corollary}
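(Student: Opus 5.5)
The plan is to evaluate the cell-count identity \eqref{eq:barbill} at $t=-1$ and combine it with \Cref{cor:euler} applied to the finite free complex $\mathfrak C(D)$. By \Cref{thm:barrealization}, equation \eqref{eq:barmorse} holds for every field $\kk$. Evaluating at $t=-1$ kills the term $(1+t)\Bpol$, so
\[
 \chi\bigl(\mathfrak C(D)\bigr)=\Ppol_{\mathfrak C(D),\kk}(-1)=\Mpol_{\mathrm{bar}}(D;-1).
\]
Here $\chi$ is read as the alternating sum of ranks, which is independent of $\kk$ by \Cref{cor:euler}.

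Next, I would substitute \eqref{eq:barbill}. For a finite poset, the normalized complex in simplicial degree $q$ is the direct sum of $D(i_0)$ over strict chains $i_0<\cdots<i_q$. In the total complex, a $k$-cell of $D(i_0)$ sitting over a $q$-chain has total degree $k+q$, and this is exactly the factor $t^q\Mpol_{D(i_0)}(t)$ in \eqref{eq:barbill}. Setting $t=-1$ turns $t^q$ into $(-1)^q$ and turns $\Mpol_{D(i_0)}(-1)$ into $\chi(D(i_0))$, using \Cref{cor:euler} for each $D(i_0)$. This gives the stated formula.

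The one point needing care is the justification of \eqref{eq:barbill} itself, namely that the normalized complex in simplicial degree $q$ is freely spanned by the nondegenerate simplices. For a poset, a chain $i_0\le\cdots\le i_q$ is degenerate precisely when it repeats an element. The normalized complex, taken as the quotient by degeneracies, is therefore the sum over strict chains, and it is degreewise finite free because the poset is finite. Beyond this, the argument is bookkeeping of total degrees.
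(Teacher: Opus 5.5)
Your proposal is correct and follows the paper's proof, which simply evaluates \eqref{eq:barbill} and \eqref{eq:barmorse} at $t=-1$. You additionally use \Cref{cor:euler} to read each $\Mpol_{D(i_0)}(-1)$ as $\chi(D(i_0))$, and you justify \eqref{eq:barbill} by noting that the nondegenerate simplices of a poset nerve are exactly the strict chains; the paper leaves that last step implicit.
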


\begin{proof}
Evaluate \eqref{eq:barbill} and \eqref{eq:barmorse} at $t=-1$.
\end{proof}

\begin{proposition}[Reduction of a span]
\label{prop:spanreduction}
For a span of chain maps
\[
 C_*(P)\xrightarrow{p_{-\#}}C_*(B_-),
 \qquad
 C_*(P)\xrightarrow{p_{+\#}}C_*(B_+),
\]
the bar realization admits an integral chain decomposition
\[
 \mathfrak C(D)\cong
 \operatorname{Cone}(p_{-\#},-p_{+\#})
 \oplus\bigl(C_*(P)\otimes E_1\bigr).
\]
Hence the full bar bill exceeds the reduced mapping-cone bill by
\[
 (1+t)\Mpol_P(t).
\]
All of this additional inventory consists of unit-labelled pairs.
\end{proposition}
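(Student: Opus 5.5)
We compute the normalized bar complex of the span poset $\mathcal I=\{-\leftarrow 0\rightarrow +\}$ directly and then split off a contractible summand by an integral change of basis. The strict chains of $\mathcal I$ are the three objects in simplicial degree $0$ and the two arrows $0<-$, $0<+$ in simplicial degree $1$; there are no longer chains. By \eqref{eq:barbill} this gives
\[
 \mathfrak C(D)_n=C_n(B_-)\oplus C_n(P)\oplus C_n(B_+)\oplus C_{n-1}(P)_{(0<-)}\oplus C_{n-1}(P)_{(0<+)} .
\]
Write $a_\pm$ for an element of $C_{n-1}(P)_{(0<\pm)}$. The total differential is
\[
 D(a_\pm)=\pm'\bigl(p_{\pm\#}(a_\pm)-a_\pm|_{C(P)}\bigr)-\partial a_\pm ,
\]
with a fixed sign convention. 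The first step is to write this down explicitly and check $D^2=0$.

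Next we perform a change of basis. In each degree, replace the generator $a_+$ by itself and $a_-$ by $a_-' := a_-+a_+$ or by $a_- - a_+$, choosing the sign so that $D(a_-')$ no longer involves the middle copy $C(P)$. Equivalently, consider the subcomplex $K$ spanned by $\{a_+, D a_+\}$. The map $a_+\mapsto(\text{component of }Da_+\text{ in }C(P))=\pm a_+$ is a unit, so $K\cong \operatorname{Cone}(\id_{C(P)})\cong C_*(P)\otimes E_1$ after the usual relabelling of generators. This $K$ is a contractible subcomplex that is a degreewise direct summand. Killing it identifies the middle copy $C(P)$ with $\mp p_{+\#}(a)$-type terms.

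The cleanest route is Gaussian elimination. Since the block $a_+\to C(P)$ is $\pm\id$, which is integrally invertible, we can apply the standard integral cancellation lemma. That lemma says a complex with an isomorphism block splits as the cancelled pair (a contractible summand) plus a complex whose differential is corrected by the zigzag term $-\,\delta_{21}\delta_{11}^{-1}\delta_{12}$. Carrying out this cancellation, the remaining generators are $C(B_-)\oplus C(B_+)\oplus C_{*-1}(P)_{(0<-)}$. The corrected differential sends $a$ to $p_{-\#}a - p_{+\#}a-\partial a$, up to the sign conventions. This is exactly $\operatorname{Cone}(p_{-\#},-p_{+\#}):C(P)\to C(B_-)\oplus C(B_+)$. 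I expect the main obstacle to be bookkeeping: verifying that the complementary summand can be chosen as a genuine subcomplex, rather than only a quotient, so that we obtain an honest direct sum over $\Z$. The explicit basis change $x\mapsto x\pm a_+$ on the middle copy handles this, and I would check it against the sign rule for $D$.

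Finally, the polynomial statements follow by counting generators. The cancelled summand has cell polynomial $(1+t)\Mpol_P(t)$, one copy of $C_n(P)$ in degree $n$ and one in degree $n+1$. Since it is isomorphic to $C_*(P)\otimes E_1$, which is finite free and contractible, \Cref{thm:smith} shows that its Smith decomposition consists only of $T_k(1)$ blocks. This is the same argument as in part (a) of \Cref{thm:generalrefinement}, and it shows that all the extra inventory carries the label $[1]$.
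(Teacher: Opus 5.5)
Your proposal is correct and is essentially the paper's argument: the paper also writes the five-summand bar complex explicitly and makes a triangular integral change of coordinates ($w=u+v$, $z=v$, $x'=x+p_{-\#}(a)$), whose new basis splits into a contractible summand spanned by the leg copies $(a)_-$ and their boundaries, and a cone summand spanned by $C_*(B_-)\oplus C_*(B_+)$ together with the differences $(a)_+-(a)_-$. This is exactly your $K$ and its complement with the two legs exchanged, and your choice $a_-'=a_--a_+$ already makes the complement $C_*(B_-)\oplus C_*(B_+)\oplus\langle a_-'\rangle$ a genuine subcomplex (its differential avoids the middle copy), so neither the Gaussian-elimination detour nor the basis change on the middle copy is needed.
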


\begin{proof}
Write $A=C(P)$, $X=C(B_-)$, $Y=C(B_+)$, $f=p_{-\#}$, and $g=p_{+\#}$. With one standard totalization convention, an element of the normalized bar complex is written
\[
 (a,x,y,u,v)\in A_n\oplus X_n\oplus Y_n\oplus A_{n-1}\oplus A_{n-1},
\]
and the differential is
\[
 d(a,x,y,u,v)=
 (d_A(a)-u-v,\ d_X(x)+f(u),\ d_Y(y)+g(v),\ -d_A(u),\ -d_A(v)).
\]
Put
\[
 w=u+v,\qquad z=v,\qquad x'=x+f(a).
\]
This is an integral triangular change of basis. In the new coordinates,
\[
 d(a,x',y,w,z)=
 (d_A(a)-w,\ d_X(x')-f(z),\ d_Y(y)+g(z),\ -d_A(w),\ -d_A(z)).
\]
The variables $(a,w)$ form the contractible summand $A\otimes E_1$, while $(x',y,z)$ form the mapping cone of $(-f,g)$, which is isomorphic by a sign change to the cone of $(f,-g)$. The inventory statement follows by counting generators.
\end{proof}

\begin{remark}[Scope of the diagrammatic realization]
The theorem assigns to every finite direct diagram of script complexes a derived realization, well defined up to quasi-isomorphism. It does not make the resulting chain complex a complete invariant of the underlying space; \Cref{sec:limits} records this limitation.
\end{remark}

\section{Double mapping cylinders and cohomogeneity-one bills}
\label{sec:dmc}

Let $p_-:P\to B_-$ and $p_+:P\to B_+$ be cellular maps of finite CW complexes. Their double mapping cylinder is
\[
 \operatorname{DMC}(p_-,p_+)
 =B_-\cup_{p_-}\bigl(P\times[0,1]\bigr)\cup_{p_+}B_+.
\]
The two maps are part of the structure. Replacing them by the three quantities of $B_-$, $P$, and $B_+$ loses the attaching information. By \Cref{prop:spanreduction}, the mapping-cone model below is the reduced bar realization of this span.

\begin{theorem}[Mapping-cone bill]
\label{thm:dmc}
There is a cellular chain model
\[
 C_*\bigl(\operatorname{DMC}(p_-,p_+)\bigr)
 \cong
 \operatorname{Cone}\!\left(
   (p_{-\#},-p_{+\#}):C_*(P)\longrightarrow C_*(B_-)\oplus C_*(B_+)
 \right).
\]
Consequently its raw cell inventory is
\begin{equation}
\label{eq:dmcbill}
 \Mpol_{\mathrm{DMC}}(t)=\Mpol_{B_-}(t)+\Mpol_{B_+}(t)+t\Mpol_P(t),
\end{equation}
but its homology depends on the two chain maps, not only on the three polynomials.
\end{theorem}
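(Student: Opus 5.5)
The plan is to build an explicit CW structure on $\operatorname{DMC}(p_-,p_+)$ and read off its cellular chain complex directly. The cells are of three kinds: the cells of $B_-$, the cells of $B_+$, and, for each open cell $\sigma$ of $P$ of dimension $k$, one open cell $\sigma\times(0,1)$ of dimension $k+1$. The open interval is a single $1$-cell, and its endpoints are absorbed into $B_\pm$ through $p_\pm$. Since $p_\pm$ are cellular, the attaching map of each cylinder cell sends its boundary into the $k$-skeleton. That boundary consists of $\sigma\times\{0\}$, $\sigma\times\{1\}$ and $\partial\sigma\times[0,1]$. So this is a genuine CW structure. In each degree $n$ the chain group is
\[
C_n(B_-)\oplus C_n(B_+)\oplus C_{n-1}(P),
\]
which is exactly the underlying graded group of the cone. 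Counting generators then gives \eqref{eq:dmcbill} immediately; the factor $t$ records the degree shift of the cylinder cells.

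Next I would compute the cellular boundary of a cylinder cell $\sigma\times(0,1)$. The plan is to take the cellular chain complex of $P\times[0,1]$, using the product CW structure and the Künneth/cellular product formula, which is the tensor product with $C_*([0,1])$ as in Proposition~\ref{prop:semiring}. With the product orientation this gives
\[
\partial(\sigma\times I)=\sigma\times\{1\}-\sigma\times\{0\}+(-1)^{k}\,\partial\sigma\times I.
\]
The relative cellular chain complex of the pushout along $P\times\{0,1\}\to B_-\sqcup B_+$ is then obtained by pushing the two end terms forward via the cellular chain maps $p_{\pm\#}$. This uses naturality of cellular chains for cellular maps, together with the identification of the cellular boundary of an adjunction space with the image of the boundary under the characteristic map. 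The result is
\[
\partial(\sigma\times I)=p_{+\#}\sigma-p_{-\#}\sigma\pm\partial\sigma\times I .
\]

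The last step is to match this with the cone differential $d(x,y,u)=(d x+fu,\,dy-gu,\,-du)$, where $f=p_{-\#}$ and $g=p_{+\#}$. I would do this by an integral sign change: replace each cylinder generator by $(-1)^{k+1}\sigma\times I$, possibly composed with a global swap of the roles of the endpoints $0$ and $1$. This identifies the complex with $\operatorname{Cone}(p_{-\#},-p_{+\#})$. Alternatively, one can match it with the reduced summand from Proposition~\ref{prop:spanreduction}, since that proposition already shows the bar realization reduces to this cone.

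The main obstacle is this sign bookkeeping together with the justification that the cellular boundary of the glued cells is the pushforward of the product boundary. I would handle the latter by the standard argument: pass to the characteristic map of $P\times[0,1]\to\operatorname{DMC}$ and use naturality of the cellular boundary for cellular maps of pairs. The final claim, that homology depends on the maps, I would illustrate rather than prove in general. Take $P=S^0$ and $B_\pm=\mathrm{pt}$, which gives $S^1$. Compare it with $P=S^0$, $B_-=S^0$ and $p_-=\id$, which gives a contractible space, or more cleanly with the same polynomials but different maps. For instance, $P=S^0\sqcup S^0$ with the two possible spans gives either a circle plus an arc or two arcs' worth of components, and these have different homology from identical inventories.
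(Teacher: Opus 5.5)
Your plan is the same as the paper's proof. You give $P\times[0,1]$ the product cell structure, note that each $k$-cell $\sigma$ of $P$ contributes one $(k+1)$-cell, read off its boundary (an interior term from $\partial_P$ and end terms pushed forward by $p_{\pm\#}$), and count generators. Your justification through the cellular characteristic map $P\times[0,1]\sqcup B_-\sqcup B_+\to\operatorname{DMC}$ is more detailed than the paper's sketch but is the same argument.

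One concrete correction is needed. Your displayed product boundary $\sigma\times\{1\}-\sigma\times\{0\}+(-1)^k\partial\sigma\times I$ puts the sign on the wrong term. It gives
\[
 \partial^2(\sigma\times I)=(1+(-1)^k)\bigl((\partial\sigma)\times\{1\}-(\partial\sigma)\times\{0\}\bigr),
\]
which is nonzero, for example, for the $2$-cell of $D^2$. No rescaling of generators can repair a map with $\partial^2\neq0$. The correct cross-product formula is
\[
 \partial(\sigma\times I)=\partial\sigma\times I+(-1)^k\bigl(\sigma\times\{1\}-\sigma\times\{0\}\bigr).
\]
With this formula your rescaling $\tilde\sigma=(-1)^{k+1}\sigma\times I$ works without any endpoint swap. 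For $(k-1)$-cells $\tau$ one has $\tau\times I=(-1)^k\tilde\tau$, so $\partial\tilde\sigma=p_{-\#}\sigma-p_{+\#}\sigma-\widetilde{\partial\sigma}$. This is exactly the differential of $\operatorname{Cone}(p_{-\#},-p_{+\#})$ in your convention $d(x,y,u)=(dx+fu,\,dy-gu,\,-du)$.

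There are two smaller points. First, your alternative via \Cref{prop:spanreduction} does not prove the statement. That proposition decomposes the algebraic bar complex of the span and says nothing about the cellular chains of the double mapping cylinder; the paper uses it in the opposite direction, to say the cone is the reduced bar model. Second, your examples for the final clause need tightening. Your first comparison changes $B_-$ from a point to $S^0$, so the inventories differ ($2+2t$ versus $3+2t$), and your second leaves $B_\pm$ unspecified. A clean example is $P=S^0$, $B_-=\mathrm{pt}$, $B_+=S^0$, with either $p_+=\mathrm{id}$ or $p_+$ constant. Both choices have inventory $3+2t$, but the double mapping cylinder is an arc in the first case and $S^1\sqcup\mathrm{pt}$ in the second.
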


\begin{proof}
Give $P\times[0,1]$ the product CW structure. Besides the endpoint cells, each $k$-cell of $P$ contributes one open prism cell of dimension $k+1$. The cellular differential of a prism has the internal term $-\partial_P$ and the two endpoint terms $p_{-\#}$ and $-p_{+\#}$. This is precisely the mapping-cone differential. Counting generators gives \eqref{eq:dmcbill}.
\end{proof}

\begin{corollary}[Cohomogeneity-one semantics]
\label{cor:cohom1}
Let a compact connected Lie group $G$ act with cohomogeneity one on a compact manifold $M$, with orbit interval and group diagram
\[
 H\subset\{K_-,K_+\}\subset G.
\]
Then
\[
 P=G/H,\qquad B_\pm=G/K_\pm,
\]
and $M$ is the double mapping cylinder of the homogeneous projections $p_\pm:P\to B_\pm$. Equivalently,
\[
 M\cong G\times_{K_-}D^{\ell_-+1}\ \cup_{G/H}\ G\times_{K_+}D^{\ell_++1},
 \qquad K_\pm/H\cong S^{\ell_\pm}.
\]
The relevant datum is therefore the diagram $B_-\xleftarrow{p_-}P\xrightarrow{p_+}B_+$, not the scalar sum $B_-+tP+B_+$ alone.
\end{corollary}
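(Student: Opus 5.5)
The plan is to invoke the structure theorem for cohomogeneity-one actions (Mostert's theorem, in the form of the Grove--Ziller group-diagram description) and then translate it into the double-mapping-cylinder language of \Cref{thm:dmc}. First I would use that the orbit space $M/G$ of a cohomogeneity-one action on a compact connected manifold is a compact one-dimensional Hausdorff space. In the case under consideration it is a closed interval $[-1,1]$. Next I would choose a geodesic $c:[-1,1]\to M$ orthogonal to the orbits, defined with respect to a $G$-invariant Riemannian metric. Its isotropy groups are constant, equal to $H$, on the open interval, and equal to $K_\pm$ at the endpoints, with $H\subset K_\pm$. This is how the group diagram arises. The principal orbit is then $G\cdot c(0)\cong G/H$ and the singular orbits are $B_\pm=G\cdot c(\pm1)\cong G/K_\pm$.

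Second, I would apply the slice theorem at the endpoint orbits. A $G$-invariant tubular neighbourhood of $B_\pm$ is $G\times_{K_\pm}D_\pm$, where $D_\pm$ is the normal disc. Since the action of $K_\pm$ on the normal sphere must be transitive (cohomogeneity one near the endpoint) with isotropy $H$, we get $K_\pm/H\cong S^{\ell_\pm}$ and $D_\pm=D^{\ell_\pm+1}$. The complement of the two open tubes is $G$-diffeomorphic to $G/H\times[-\tfrac12,\tfrac12]$, because the open interval of orbits carries a trivial orbit bundle: the section $c$ trivializes it equivariantly. Gluing gives the displayed decomposition $M\cong G\times_{K_-}D^{\ell_-+1}\cup_{G/H}G\times_{K_+}D^{\ell_++1}$.

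Third, I would convert the disc bundles into mapping cylinders. The bundle $G\times_{K_\pm}D^{\ell_\pm+1}\to G/K_\pm$ is the mapping cylinder of its sphere-bundle projection $G\times_{K_\pm}S^{\ell_\pm}=G/H\to G/K_\pm$, and this projection is exactly the homogeneous map $p_\pm:gH\mapsto gK_\pm$. The radial coordinate $r\mapsto$ cone-to-zero-section provides the homeomorphism. Concatenating the two mapping cylinders along their common boundary $G/H$ yields $\operatorname{DMC}(p_-,p_+)$. The final sentence of the statement then follows from \Cref{thm:dmc}: the chain model is the cone of $(p_{-\#},-p_{+\#})$, whose homology depends on the maps and not only on $\Mpol_{B_-}+\Mpol_{B_+}+t\Mpol_P$. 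To apply \Cref{thm:dmc} one needs CW structures in which $p_\pm$ is cellular, obtainable via cellular approximation up to homotopy. This changes nothing homotopically, since double mapping cylinders are homotopy invariant in the maps.

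The main obstacle is the first step, namely ensuring that the orbit space really is an interval with exactly two singular endpoints, and not a circle. This rests on Mostert's classification; I would cite it rather than reprove it, reading ``with orbit interval'' in the statement as the hypothesis that excludes the circle case. The identification of the disc bundle with the mapping cylinder is routine once the slice representation is known to be transitive on the normal sphere.
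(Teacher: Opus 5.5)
Your proposal is correct and takes essentially the same route as the paper. The paper cites the standard Grove--Ziller (Mostert) group-diagram description of $M$ as the union of the normal disk bundles of its two non-principal orbits, and then applies \Cref{thm:dmc}. You additionally spell out the normal-geodesic, slice-representation and mapping-cylinder steps, and the cellular-approximation caveat needed to invoke \Cref{thm:dmc}.
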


\begin{proof}
This is the standard group-diagram description of a compact cohomogeneity-one manifold as the union of the normal disk bundles of its two non-principal orbits; see, for example, \cite{GroveZiller2000}. Applying \Cref{thm:dmc} gives the chain model.
\end{proof}

\begin{remark}[A Thom-adapted inventory]
If the positive normal disk bundle has rank $d_+$ and is oriented over the chosen coefficients, one may instead start with the negative disk bundle, which retracts to $B_-$, and attach the positive disk bundle relative to its sphere boundary. A compatible Thom CW structure has inventory
\[
 \Mpol_{B_-}(t)+t^{d_+}\Mpol_{B_+}(t).
\]
This inventory is usually smaller than \eqref{eq:dmcbill}; the difference consists of adjacent-dimensional pairs. The attaching homomorphism is still essential.
\end{remark}

\section{Joins as a corrected morphological operation}
\label{sec:joins}

Several of Sommen's sphere formulas are naturally expressed in terms of joins. The join is the double mapping cylinder of the projections $X\times Y\to X$ and $X\times Y\to Y$. Hence its orbit-style inventory is $\Mpol_X+\Mpol_Y+t\Mpol_X\Mpol_Y$, while the projection maps encode the endpoint collapses. Reduced chains satisfy, up to the usual cellular identifications,
\[
 \widetilde C_*(X*Y;\kk)\simeq \Sigma\bigl(\widetilde C_*(X;\kk)\otimes\widetilde C_*(Y;\kk)\bigr).
\]

\begin{proposition}[Homological join formula]
\label{prop:joinP}
For connected finite CW complexes over a field,
\[
 \widetilde\Ppol_{X*Y,\kk}(t)
 =t\,\widetilde\Ppol_{X,\kk}(t)\widetilde\Ppol_{Y,\kk}(t),
\]
or equivalently
\[
 \Ppol_{X*Y,\kk}(t)=1+t\bigl(\Ppol_X(t)-1\bigr)\bigl(\Ppol_Y(t)-1\bigr).
\]
\end{proposition}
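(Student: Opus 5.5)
The plan is to derive the formula from the double-mapping-cylinder description of the join, combined with \Cref{thm:dmc} and the Künneth theorem. I will not invoke the reduced-chain suspension identity displayed before the statement as a black box.

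Write $X*Y=\operatorname{DMC}(\mathrm{pr}_X,\mathrm{pr}_Y)$ for the projections $X\times Y\to X$ and $X\times Y\to Y$. By \Cref{thm:dmc}, the cellular chains are the cone of
\[
 (\mathrm{pr}_{X\#},-\mathrm{pr}_{Y\#}):C_*(X\times Y;\kk)\longrightarrow C_*(X;\kk)\oplus C_*(Y;\kk).
\]
Its long exact sequence, equivalently the Mayer--Vietoris sequence of the span, reads
\[
 \cdots\to H_j(X\times Y)\xrightarrow{\phi_j}H_j(X)\oplus H_j(Y)\to H_j(X*Y)\to H_{j-1}(X\times Y)\xrightarrow{\phi_{j-1}}\cdots .
\]
Consequently $\beta_j(X*Y)=\dim\operatorname{coker}\phi_j+\dim\ker\phi_{j-1}$.

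The key step is to compute $\phi$ explicitly using Künneth over $\kk$. Write
\[
 H_*(X\times Y)=\bigoplus_{p,q}H_p(X)\otimes H_q(Y),
\]
and split $H_p(X)=\kk\cdot[\mathrm{pt}]\oplus\widetilde H_p(X)$ in degree $0$, and similarly for $Y$. This is where connectedness enters: $H_0=\kk$. The projection $\mathrm{pr}_{X*}$ sends $a\otimes b$ to $\varepsilon(b)\,a$, where $\varepsilon$ is the augmentation. Hence it is the identity on $H_*(X)\otimes H_0(Y)$ and zero on $H_*(X)\otimes\widetilde H_*(Y)$, and symmetrically for $\mathrm{pr}_{Y*}$. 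It follows that $\phi$ is surjective in every degree except degree $0$. In degree $0$ its image is the antidiagonal line in $\kk\oplus\kk$, so its cokernel there is $\kk$. Its kernel is spanned by the antidiagonal $0$-class together with $\widetilde H_*(X)\otimes\widetilde H_*(Y)$. Checking this kernel computation carefully, including the sign in $(\mathrm{pr}_X,-\mathrm{pr}_Y)$ and the degree-$0$ bookkeeping, is the only real obstacle. I would organize it by writing $H_*(X\times Y)$ as the sum of four blocks: $\kk$, $\widetilde H(X)\otimes\kk$, $\kk\otimes\widetilde H(Y)$, and $\widetilde H(X)\otimes\widetilde H(Y)$. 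On these blocks $\phi$ is, respectively, $1\mapsto(1,-1)$, an isomorphism onto $\widetilde H(X)$, an isomorphism onto $\widetilde H(Y)$, and zero.

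The counts then follow directly:
\[
 \dim\operatorname{coker}\phi_j=\delta_{j0},\qquad
 \dim\ker\phi_{j-1}=\dim\bigl(\widetilde H_*(X)\otimes\widetilde H_*(Y)\bigr)_{j-1}.
\]
In degree $1$ there is no extra contribution, because the degree-$0$ kernel consists only of reduced-tensor terms, and those vanish in degree $0$ by connectedness. Summing over $j$ gives
\[
 \Ppol_{X*Y}=1+t\,\widetilde\Ppol_X\widetilde\Ppol_Y.
\]
Finally, $\widetilde\Ppol=\Ppol-1$ for connected complexes, which gives both displayed forms of the statement. As a cross-check, $X=Y=S^0$ is excluded because those spaces are disconnected, and $S^m*S^n$ gives $1+t^{m+n+1}$, as it should.
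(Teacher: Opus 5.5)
Your proof is correct, but it takes a different route from the paper. The paper's proof is one line. It uses the homotopy equivalence $X*Y\simeq\Sigma(X\wedge Y)$ and the K\"unneth theorem for the smash product, which give $\widetilde H_j(X*Y;\kk)\cong\bigoplus_{p+q=j-1}\widetilde H_p(X;\kk)\otimes\widetilde H_q(Y;\kk)$ directly. You avoid the suspension--smash equivalence and stay inside the paper's own framework:
\begin{itemize}
\item the chain model of \Cref{thm:dmc} for the span $X\leftarrow X\times Y\rightarrow Y$;
\item the long exact sequence of the cone;
\item a blockwise computation of $\phi=(\mathrm{pr}_{X*},-\mathrm{pr}_{Y*})$ via naturality of the cross product.
\end{itemize}
The paper's route is shorter but treats the homotopy equivalence as a black box. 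Yours costs some degree-zero bookkeeping, but it shows concretely how the endpoint projections produce the answer. The cokernel of $\phi$ contributes only the class in degree $0$, and $\ker\phi=\widetilde H_*(X)\otimes\widetilde H_*(Y)$ reappears shifted up by one. This is exactly the paper's point that the attaching maps, not the three inventories, determine the homology.

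One sentence needs correcting: $\ker\phi$ contains no degree-$0$ class. The block $H_0(X)\otimes H_0(Y)=\kk$ maps injectively onto the antidiagonal line, and that line is the image of $\phi_0$ in the target. So $\ker\phi=\widetilde H_*(X)\otimes\widetilde H_*(Y)$ exactly. This is what your block description and final count actually use; the earlier sentence, read literally, would add a spurious generator to $H_1(X*Y)$.

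Also, neither argument really needs connectedness. If you take $\widetilde H_0$ to be the kernel of the augmentation, your four-block decomposition works unchanged for all nonempty $X,Y$. Indeed $S^0*S^0\cong S^1$ satisfies $1+t(2-1)(2-1)=1+t$.
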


\begin{proof}
Use $X*Y\simeq\Sigma(X\wedge Y)$ and the K\"unneth theorem over a field \cite{Hatcher2002}.
\end{proof}

For example, $S^p*S^q\cong S^{p+q+1}$ and the formula gives $1+t^{p+q+1}$ from $1+t^p$ and $1+t^q$. Unlike an ordinary product formula, it incorporates the collapse of one factor at each endpoint of the join parameter.

\section{Selected Borel--Moore models for Sommen's open strata}
\label{sec:borelmoore}

We do not develop a general constructible theory. The noncompact identities considered in this section admit direct interpretations in Borel--Moore homology. For the locally compact Hausdorff, locally contractible spaces used below, let $H_*^{\BM}(X)$ denote homology of locally finite singular chains. With constant coefficients it is computed by the homology of the one-point compactification relative to the added point \cite{BorelMoore1960}. When the groups have finite rank, write
\[
 \Ppol_X^{\BM}(t)=\sum_i \rank H_i^{\BM}(X;\Z)t^i.
\]
Only the examples needed later are considered.

\begin{example}[The line and the relation $R=2R_++1$]
\label{ex:BMline}
Compactify $\R$ to $S^1$ and choose two vertices, $0$ and $\infty$, joined by two oriented edges. Relative to the point at infinity, the cellular complex is
\[
 0\longleftarrow \Z\{0\}
 \xleftarrow{\ (1\ -1)\ }
 \Z\{e_+,e_-\}
 \longleftarrow0.
\]
Its raw inventory is
\[
 1+2t,
\]
which is exactly the stratified bill $1+2R_+$ after assigning one open radial direction degree one. Its Borel--Moore homology is $\Z$ in degree one, so
\[
 1+2t=t+(1+t).
\]
Thus $R=2R_++1$ is not a scalar equality of finite cell counts; it is a refinement identity with one contractible adjacent-dimensional pair.
\end{example}

\begin{proposition}[Punctured Euclidean space]
\label{prop:BMpunctured}
For every $n\ge1$,
\[
 \Ppol_{\R^n\setminus\{0\}}^{\BM}(t)=t^n+t.
\]
This agrees with the polar factorization
\[
 \R^n\setminus\{0\}\cong S^{n-1}\times(0,\infty)
\]
at the Borel--Moore homology level:
\[
 t^n+t=t(1+t^{n-1}).
\]
\end{proposition}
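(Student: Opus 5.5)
The plan is to compute Borel--Moore homology through the one-point compactification, as in the convention fixed at the start of this section. The one-point compactification of $\R^n\setminus\{0\}$ is $S^n$ with two points identified: the origin and the point at infinity. So
\[
 H_i^{\BM}(\R^n\setminus\{0\};\Z)\cong \widetilde H_i\bigl(S^n/\{0\sim\infty\};\Z\bigr).
\]
Collapsing two points of a connected CW complex is homotopy equivalent to wedging on a circle. Hence $S^n/\{0\sim\infty\}\simeq S^n\vee S^1$, whose reduced homology is $\Z$ in degrees $1$ and $n$. This gives $t^n+t$. For $n=1$ the formula should read $2t$; the space is then two open half-lines, each contributing $t$, which agrees with \Cref{ex:BMline}. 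I will remark on this case separately, noting that "$t^n+t$" is to be read with multiplicity when $n=1$.

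To stay within the paper's finite-script language, I will also give an explicit cellular model. Take $S^n$ with two $0$-cells $0,\infty$, a $1$-cell $e$ joining them (a meridian arc), and an $n$-cell. Work relative to $\infty$ after the identification; equivalently, use the cellular chains of $S^n$ relative to the pair of points $\{0,\infty\}$. In the quotient, $e$ becomes a loop, so $\partial e=0$. The $n$-cell's boundary is null as well, by the usual single-cell sphere structure on $S^n$ with the $1$-cell attached on the side. The relative complex therefore has generators in degrees $1$ and $n$ with zero differential. By \Cref{thm:normal} the raw inventory equals the Poincar\'e polynomial, with $\Bpol=0$.

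Next, compare with the polar factorization $\R^n\setminus\{0\}\cong S^{n-1}\times(0,\infty)$. Borel--Moore homology satisfies a K\"unneth formula for products of locally compact spaces. Over $\Z$, both factors here have free homology, so no Tor terms appear. Concretely, the one-point compactification of a product $X\times Y$ is $X^+\wedge Y^+$. So $H^{\BM}_*(S^{n-1}\times(0,\infty))\cong\widetilde H_*(S^{n-1}_+\wedge S^1)$, the reduced homology of the unreduced suspension-type smash $\Sigma(S^{n-1}_+)$. This is the suspension of $S^{n-1}\sqcup\mathrm{pt}$, whose reduced homology is $\widetilde H_{*-1}(S^{n-1}_+)$. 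That yields $t\,\Ppol_{S^{n-1}}(t)=t(1+t^{n-1})$, matching the direct computation. The product statement is also an instance of the Borel--Moore analogue of \Cref{prop:semiring}, using $\Ppol^{\BM}_{(0,\infty)}=t$ from \Cref{ex:BMline}.

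The only delicate step is justifying the identification $(X\times Y)^+\cong X^+\wedge Y^+$. This requires compactness hypotheses, and here $X=S^{n-1}$ is already compact, so $X^+=S^{n-1}\sqcup\{\ast\}$. I will verify it directly: $(S^{n-1}\times(0,\infty))^+$ is $S^{n-1}\times[0,\infty]$ with both ends $S^{n-1}\times\{0\}$ and $S^{n-1}\times\{\infty\}$ collapsed to a single point. That is precisely the direct description above, so the two computations coincide and no general K\"unneth theorem is needed.
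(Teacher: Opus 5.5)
Your argument is correct, but it takes a different route from the paper. The paper appeals to Borel--Moore Poincar\'e duality: $\R^n\setminus\{0\}$ is an oriented $n$-manifold homotopy equivalent to $S^{n-1}$, so $H_i^{\BM}\cong H^{n-i}$ is nonzero only for $i=n$ and $i=1$, with both contributions landing in degree one when $n=1$. It then handles the polar comparison by noting that $(0,\infty)\cong\R$ has its Borel--Moore class in degree one, which implicitly uses a K\"unneth formula.

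You instead work directly from the convention fixed at the start of the section. You identify $(\R^n\setminus\{0\})^+\cong S^n/\{0\sim\infty\}\simeq S^n\vee S^1$, and you supply a finite relative cellular model: cells $e$ and $e^n$, zero differential, $\Bpol=0$, in the same spirit as \Cref{ex:BMline}. This needs no orientability or duality, and the case $n=1$ falls out of the same computation. Your check that $(S^{n-1}\times(0,\infty))^+=S^{n-1}\times[0,\infty]/(S^{n-1}\times\{0,\infty\})$ also shows the polar comparison identifies the same compactified space, not merely two equal polynomials, so no general Borel--Moore K\"unneth theorem is required. The paper's route is shorter and transfers at once to other oriented manifolds with known cohomology.

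Two wording points. First, $(X\times Y)^+\cong X^+\wedge Y^+$ holds for all locally compact Hausdorff $X,Y$ and needs no compactness hypothesis. Second, the phrase ``the usual single-cell sphere structure with the $1$-cell attached on the side'' does not by itself justify $\partial e^n=0$. The clean reason is this: for $n\ge3$ the relative complex has nothing in degree $n-1$. For $n=2$, the absolute $\partial_1$ is injective because $\partial e=\infty-0$, so the cycle $\partial_2e^2$ must vanish.
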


\begin{proof}
The punctured space deformation retracts ordinarily to $S^{n-1}$ and is an oriented $n$-manifold. Borel--Moore Poincar\'e duality identifies $H_i^{\BM}(\R^n\setminus\{0\})$ with $H^{n-i}(\R^n\setminus\{0\})$; hence the only nonzero groups occur in degrees $n$ and $1$ (with the same formula also giving two degree-one generators when $n=1$). The polar product gives the identical polynomial because $(0,\infty)\cong\R$ has Borel--Moore class in degree one.
\end{proof}

\begin{proposition}[Infinite cones and null-cone bills]
\label{prop:BMcone}
Let $L$ be a nonempty finite CW complex and let
\[
 c_\infty L=(L\times[0,\infty))/(L\times\{0\})
\]
be the infinite cone with apex $v$. There is a finite Borel--Moore cellular model with
\[
 D_0=\Z\{v\},\qquad D_{k+1}=C_k(L),
\]
whose differential in degree one is the cellular augmentation $\epsilon:C_0(L)\to\Z$ and in higher degrees is the shifted differential of $L$. Consequently
\[
 H_i(D)\cong \widetilde H_{i-1}(L),\qquad i\ge1,
 \qquad H_0(D)=0.
\]
Equivalently, $(c_\infty L)^+$ is the suspension $\Sigma L$.
\end{proposition}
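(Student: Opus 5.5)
The plan is to compute Borel--Moore homology through the one-point compactification, as fixed at the start of this section: $H_*^{\BM}(c_\infty L)\cong H_*\bigl((c_\infty L)^+,\{\infty\}\bigr)$. So I first prove the last claim, $(c_\infty L)^+\cong\Sigma L$. Reparametrize $[0,\infty)$ by $[0,1)$ via $s\mapsto s/(1+s)$. Then $c_\infty L$ becomes $(L\times[0,1))/(L\times\{0\})$. Since $L$ is compact, the one-point compactification adds exactly the single point $L\times\{1\}$ collapsed. The result is $L\times[0,1]$ with both ends collapsed separately, which is the unreduced suspension $\Sigma L$. The only checks needed are that this map is a continuous bijection from a compact space to a Hausdorff space, and that neighbourhoods of $\infty$ correspond to complements of compact sets. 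Compactness of $L$ makes $\{s\le R\}$ compact, which settles both.

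Next I build the cellular model. Give $\Sigma L$ its standard CW structure: two vertices $v$ (the apex, at $0$) and $\infty$ (at $1$), and one $(k+1)$-cell $\sigma\times(0,1)$ for each $k$-cell $\sigma$ of $L$. This is the double mapping cylinder of $L\to\{v\}$ and $L\to\{\infty\}$. By \Cref{thm:dmc} its cellular chain complex is the cone of $(\epsilon,-\epsilon):C_*(L)\to\Z\{v\}\oplus\Z\{\infty\}$. Passing to the quotient by the subcomplex $\Z\{\infty\}$ gives the relative complex. It has $D_0=\Z\{v\}$ and $D_{k+1}=C_k(L)$, with differential $\pm\partial_L$ in higher degrees and $\epsilon$ (up to sign) from $D_1=C_0(L)$ to $D_0$. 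I will absorb the signs by reorienting cells, or by noting that a sign change does not affect homology. Relative cellular homology computes $H_*((c_\infty L)^+,\infty)$, which gives the finite Borel--Moore model.

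Finally I compute the homology. The complex $D$ is, up to sign, the suspension of the augmented complex $\Z\xleftarrow{\epsilon}C_*(L)$, and the augmented complex computes $\widetilde H_*(L)$. Hence $H_i(D)\cong\widetilde H_{i-1}(L)$ for $i\ge1$. In degree zero, $H_0(D)=\Z/\operatorname{im}\epsilon$. Since $L$ is nonempty it has a $0$-cell, so $\epsilon$ is surjective and $H_0(D)=0$. The same conclusion also follows from the suspension isomorphism $\widetilde H_i(\Sigma L)\cong\widetilde H_{i-1}(L)$ together with $H_*(\Sigma L,\infty)=\widetilde H_*(\Sigma L)$.

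I expect the main obstacle to be justifying that cellular homology of the relative CW pair really computes locally finite singular Borel--Moore homology. I would handle this by citing the convention already adopted in the section, which holds because $c_\infty L$ is locally compact and locally contractible. Everything else is bookkeeping of signs.
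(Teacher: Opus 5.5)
Your proposal is correct and follows the same route as the paper: identify $(c_\infty L)^+$ with $\Sigma L$, take its cellular chains relative to the added vertex, and read off the augmentation in degree one and the shifted boundaries of $L$ above it. You also fill in steps the paper leaves implicit: the compactification check, the differential obtained from the double-mapping-cylinder model of \Cref{thm:dmc} with signs absorbed by reorientation, and $H_i(D)\cong\widetilde H_{i-1}(L)$ and $H_0(D)=0$, which follow from the shifted augmented complex and the surjectivity of $\epsilon$.
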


\begin{proof}
The one-point compactification of the infinite cone adds the second suspension vertex, giving $\Sigma L$. Taking the cellular complex relative to that vertex leaves the apex and one shifted copy of every cell of $L$. The boundary of a shifted $0$-cell hits the apex through the augmentation, while all higher boundaries are shifted boundaries from $L$.
\end{proof}

\begin{example}[Sommen's complex quadratic null cone in real coordinates]
For $n\ge2$, consider Sommen's complex quadratic null cone
\[
 NC_{n-1}=\left\{z=(z_1,\ldots,z_n)\in\mathbb C^n:\sum_{j=1}^n z_j^2=0\right\}.
\]
Writing $z=x+iy$ with $x,y\in\R^n$, the defining equation is equivalent to
\[
 |x|=|y|,\qquad \langle x,y\rangle=0.
\]
For $z\neq0$, radial normalization therefore leaves an orthonormal two-frame, so the nonzero link is $V_{n,2}(\R)$ and topologically
\[
 NC_{n-1}\cong c_\infty V_{n,2}(\R).
\]
For any chosen finite CW structure on $V_{n,2}(\R)$, the raw conical inventory has the form
\[
 1+t\Mpol_{V_{n,2}},
\]
mirroring Sommen's $1+V_{n,2}(\R)R_+$ \cite{Sommen2016}. The additional datum is the augmentation from the radial degree-one cells to the apex. If this map is omitted, the cone attachment is replaced by an ordinary sum. Thus the raw inventory agrees with the conical cell model only when the attaching map is retained. This example should be distinguished from Sommen's later real signature-$(p,q)$ null cones, whose normalized links are products of spheres rather than $V_{n,2}(\R)$.
\end{example}

\begin{remark}
These examples extend the construction to conical and finitely compactifiable strata. They do not provide a closed algebra of arbitrary complements and constructible sets; functorial subtraction and general stratified pushforwards would require a systematic Borel--Moore or Grothendieck framework.
\end{remark}

\section{The bivector sphere as a join}
\label{sec:bivector}

Let $V$ be an oriented Euclidean four-space. The Hodge star gives an orthogonal decomposition
\[
 \Lambda^2V=\Lambda^2_+V\oplus\Lambda^2_-V,
\]
where each summand has dimension three \cite{DonaldsonKronheimer1990}. Write $S_\pm=S(\Lambda^2_\pm V)\cong S^2$.

\begin{theorem}[Hodge--join model of the unit bivector sphere]
\label{thm:bivjoin}
The map
\[
 \Phi:S_+*S_-\longrightarrow S(\Lambda^2V),\qquad
 [u,v,\theta]\longmapsto \cos\theta\,u+\sin\theta\,v,
 \quad 0\le\theta\le\frac\pi2,
\]
is an $SO(V)$-equivariant homeomorphism. Hence
\[
 S(\Lambda^2\R^4)\cong S^2*S^2\cong S^5.
\]
Under this homeomorphism:
\begin{enumerate}[label=\textup{(\roman*)}]
 \item nonzero rank-two (simple) bivectors form the midpoint
 \[
   \theta=\frac\pi4,\qquad S_+\times S_-\cong S^2\times S^2;
 \]
 \item bivectors with two equal positive singular values form the two join ends
 \[
   \theta=0\quad\text{or}\quad\theta=\frac\pi2,
   \qquad S_+\sqcup S_-\cong S^2\sqcup S^2;
 \]
 \item bivectors with two distinct positive singular values form the two open chambers
 \[
  S^2\times S^2\times\left(0,\frac\pi4\right)
  \quad\sqcup\quad
  S^2\times S^2\times\left(\frac\pi4,\frac\pi2\right).
 \]
\end{enumerate}
\end{theorem}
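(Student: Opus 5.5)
The plan has two parts. First I show that $\Phi$ is a well-defined equivariant homeomorphism, using the orthogonality of the Hodge splitting. Then I identify the rank and singular-value strata with the join coordinate $\theta$.

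\textbf{Well-definedness, bijectivity, homeomorphism.} Since $\Lambda^2_+V\perp\Lambda^2_-V$ and $|u|=|v|=1$, we have $|\cos\theta\,u+\sin\theta\,v|^2=\cos^2\theta+\sin^2\theta=1$, so $\Phi$ lands in the unit sphere. It respects the join relations: at $\theta=0$ the value is $u$, independent of $v$, and at $\theta=\pi/2$ it is $v$, independent of $u$.

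For the inverse, write an arbitrary unit bivector uniquely as $\omega=\omega_++\omega_-$ with $|\omega_+|^2+|\omega_-|^2=1$. Put $\theta=\arctan(|\omega_-|/|\omega_+|)\in[0,\pi/2]$, $u=\omega_+/|\omega_+|$ and $v=\omega_-/|\omega_-|$. Each of $u$ and $v$ is forgotten exactly when its norm vanishes, which is precisely where the join identifies points. Hence $\Phi$ is a continuous bijection from a compact space to a Hausdorff space, and therefore a homeomorphism.

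Equivariance follows because $SO(V)$ commutes with the Hodge star, so it preserves both summands and acts isometrically on each. The identification $S^2*S^2\cong S^5$ then follows from $S^p*S^q\cong S^{p+q+1}$.

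\textbf{Strata.} I will use the normal form of a skew form in dimension four. In a suitable oriented orthonormal basis, $\omega=a\,e_{12}+b\,e_{34}$ with $a\ge|b|\ge0$; the singular values are $a$ and $|b|$, each repeated twice, and $a^2+b^2=1$. Because $*e_{12}=e_{34}$, we get
\[
 \omega_\pm=\tfrac{a\pm b}{2}(e_{12}\pm e_{34}),\qquad |\omega_\pm|=\tfrac{|a\pm b|}{\sqrt2}.
\]
By equivariance, and because every bivector is conjugate under $SO(V)$ to such a normal form, the stratum conditions can be checked on this form.

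For (i): $\omega$ has rank two iff $b=0$, iff $|\omega_+|=|\omega_-|$, iff $\theta=\pi/4$. The standard criterion $\omega\wedge\omega=0$, equivalently $|\omega_+|^2-|\omega_-|^2=0$, gives the same condition without choosing a basis.

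For (ii): the two singular values are equal iff $b=\pm a$, iff $\omega_-=0$ or $\omega_+=0$, iff $\theta=0$ or $\theta=\pi/2$.

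For (iii): the remaining values of $\theta$ give two distinct nonzero singular values, and the sign of $b$ separates the two chambers. On the open interior of the join, $\Phi$ restricts to the product $S_+\times S_-\times(0,\pi/2)$, which splits at $\pi/4$ into the two displayed pieces.

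\textbf{Main obstacle.} The step needing the most care is the identification of the singular-value strata with values of $\theta$. This rests on the basis-dependent computation of $\omega_\pm$ together with the orientation convention for $*$. The sign of $b$ must be tracked correctly, since it determines which endpoint is self-dual. I will handle this by the invariant identities $|\omega_+|^2-|\omega_-|^2=\langle\omega,*\omega\rangle$ and $|\omega|^2=1$, which express $\cos2\theta$ as $2ab$ for the normal form and involve no choice of basis.
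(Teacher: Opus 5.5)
Your proposal is correct and follows essentially the same route as the paper. The paper likewise uses the polar-coordinate homeomorphism from the join, $SO(V)$-invariance of the Hodge splitting, the criterion $\omega\wedge\omega=0\iff|\omega_+|=|\omega_-|$ for the rank-two midpoint, and the normal form $r_1e_{12}+\varepsilon r_2e_{34}$ with $|\omega_\pm|$ computed as you do; the only difference is that you write out the explicit inverse and the compact-to-Hausdorff argument, which the paper simply cites as standard.
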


\begin{proof}
The first statement is the standard polar-coordinate homeomorphism from the join of the unit spheres of two orthogonal vector spaces to the unit sphere of their direct sum. It is equivariant because the Hodge decomposition is $SO(V)$-invariant.

Write a unit bivector as $b=x+y$ with $x\in\Lambda^2_+$ and $y\in\Lambda^2_-$. Then
\[
 b\wedge b=(|x|^2-|y|^2)\vol.
\]
In four dimensions a nonzero bivector is simple exactly when $b\wedge b=0$, hence when $|x|=|y|$, which is $\theta=\pi/4$.

For the singular-value description with the orientation of $V$ fixed, choose an oriented orthonormal basis so that
\[
 b=r_1e_{12}+\varepsilon r_2e_{34},\qquad
 r_1\ge r_2\ge0,\qquad \varepsilon\in\{+1,-1\}.
\]
The singular values are $r_1,r_2$, while the sign $\varepsilon$ records the sign of the Pfaffian. With normalized self-dual and anti-self-dual bases, for $\varepsilon=+1$ one has
\[
 |x|=\frac{r_1+r_2}{\sqrt2},\qquad
 |y|=\frac{r_1-r_2}{\sqrt2},
\]
and for $\varepsilon=-1$ the two norms are interchanged. Thus $r_2=0$ exactly when $|x|=|y|$; $r_1=r_2>0$ exactly when one of $x,y$ vanishes; all remaining nonzero points have both components nonzero and unequal norms.
\end{proof}

\begin{remark}[Orbit spaces]
The midpoint $S^2\times S^2$ is the oriented Grassmannian of $2$-planes in $\R^4$. The two ends are the two components of the space of orthogonal complex structures. The theorem also records the orbit-type closure relations: the generic product fibres collapse one $S^2$ factor at each join end.
\end{remark}

\section{Resolution of Sommen's bivector bill}
\label{sec:resolution}

Sommen partitions $\Lambda^2\R^4\setminus\{0\}$ into three cases: distinct positive singular values, rank two, and equal positive singular values. After restricting to unit norm, \Cref{thm:bivjoin} identifies these cases with the two open chambers, the midpoint, and the two ends of $S^2*S^2$. Hence the partition agrees with the orbit-type decomposition after unit normalization.

Let $M_2(t)=\Mpol_{S^2}(t)$ for any chosen finite cell structure on $S^2$. Since the join is the double mapping cylinder of the two projections $S^2\times S^2\to S^2$, \Cref{thm:dmc} gives the coarse orbit inventory
\[
  \Mpol_{\mathrm{coarse}}(t)=tM_2(t)^2+2M_2(t).
\]
The projection maps, not merely the displayed polynomial, make this a chain model of $S^5$. For the minimal sphere structure $M_2=1+t^2$,
\[
 \Mpol_{\mathrm{coarse}}
 =1+t^5+(1+t)(1+2t^2),
\]
so already the unsplit orbit inventory contains one $E_1$ and two $E_3$ pairs.

After the rank-two midpoint is declared a separate stratum, the same space has inventory
\[
  \Mpol_{\mathrm{split}}(t)=2tM_2(t)^2+M_2(t)^2+2M_2(t).
\]
Thus
\[
  \Mpol_{\mathrm{split}}-\Mpol_{\mathrm{coarse}}
  =(1+t)M_2(t)^2.
\]
The new summand is the additional cell inventory introduced by separating Sommen's rank-two case from the two generic chambers; the two join ends are the equal-singular-value cases.

\begin{theorem}[The excess is join-subdivision overhead]
\label{thm:bill}
Let $J=S^2*S^2$ carry the unsplit join cell structure, let $J'$ be obtained by inserting the rank-two locus at $\theta=\pi/4$, and let $s:C_*(J)\to C_*(J')$ be the subdivision map. Then
\[
 C_*(J')/sC_*(J)\simeq C_*(S^2\times S^2)\otimes E_1,
\]
so
\[
 \Mpol_{J'}(t)-\Mpol_J(t)
 =(1+t)\Mpol_{S^2}(t)^2.
\]
The relative complex is contractible, all of its Smith labels are $[1]$, and
\[
 \Ppol_{J'}(t)=\Ppol_J(t)=1+t^5.
\]
\end{theorem}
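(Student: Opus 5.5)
The plan is to obtain this as the special case $X=Y=S^2$ of \Cref{thm:joinref}, using \Cref{thm:bivjoin} to identify the strata. First I would record that, by \Cref{thm:bivjoin}, the homeomorphism $\Phi$ identifies the rank-two locus with the level $\theta=\pi/4$ of the join parameter. Rescaling $[0,\pi/2]$ to $[0,1]$ sends this level to the midpoint $1/2$. Hence ``inserting the rank-two locus at $\theta=\pi/4$'' is exactly the midpoint subdivision of \Cref{thm:joinref}: each open join cell $\sigma*\tau$ is cut into two open cells, and the new cell $\sigma\times\tau\times\{1/2\}$ is added. The endpoint cells are the equal-singular-value orbits $S_\pm$ and are left unchanged. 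So $J'$ is $(S^2*S^2)'$ in the notation of that theorem, with $S^2$ carrying whichever finite cell structure defines $M_2$.

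Next I would invoke \Cref{thm:joinref}, which gives that $s$ is injective and that $C_*(J')/sC_*(J)\cong C_*(S^2\times S^2)\otimes E_1$. This is an isomorphism, which is stronger than the $\simeq$ asserted. The polynomial identity then follows because $\Mpol_{S^2\times S^2}=M_2^2$ by \Cref{prop:semiring}, and this agrees with the split-minus-coarse computation preceding the statement. Contractibility and the claim that all Smith labels are $[1]$ follow as in \Cref{thm:joinref}. Since $E_1$ is contractible, so is its tensor product with a free complex. By \Cref{thm:smith}, or by part (a) of \Cref{thm:generalrefinement}, a finite free contractible complex decomposes into $T_k(1)$ blocks only.

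For the Poincaré polynomials, I would use the short exact sequence $0\to C_*(J)\to C_*(J')\to Q\to0$. Here $Q$ is degreewise free, being a tensor product of free complexes, so the sequence is degreewise split. Because $Q$ is acyclic, the long exact sequence shows that $s$ is a quasi-isomorphism over $\Z$, and hence over every field, so $\Ppol_{J'}=\Ppol_J$. Finally, $\Ppol_J=1+t^5$ follows from \Cref{prop:joinP} with $\Ppol_{S^2}=1+t^2$, which gives $1+t\cdot t^2\cdot t^2$. Alternatively it follows from $J\cong S^5$ in \Cref{thm:bivjoin}.

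The main obstacle is the first step: justifying that the geometric operation of declaring the rank-two locus a stratum really produces the product-join subdivision to which \Cref{thm:joinref} applies. In particular, the cells of $J'$ must be products of cells of $S^2\times S^2$ with the two half-intervals and the midpoint. I would handle this by transporting the product CW structure through $\Phi$, which is legitimate because $\Phi$ is a homeomorphism respecting the join coordinate $\theta$. The choice of sign and orientation conventions for the subdivision chain map $s$ is then inherited verbatim from \Cref{thm:joinref}.
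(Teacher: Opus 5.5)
Your proposal is correct and follows the paper's proof: you identify the rank-two level $\theta=\pi/4$ with the midpoint of the join parameter, apply \Cref{thm:joinref} with $X=Y=S^2$, and read off $\Ppol_J=1+t^5$ from \Cref{prop:joinP}. Your long-exact-sequence argument that $s$ is a quasi-isomorphism, which gives $\Ppol_{J'}=\Ppol_J$, spells out a step the paper leaves implicit.
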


\begin{proof}
The rank-two locus is the midpoint subdivision of the join coordinate. Apply \Cref{thm:joinref} with $X=Y=S^2$. The Poincar\'e polynomial follows from \Cref{prop:joinP}.
\end{proof}

\begin{remark}[Compact normalization and the radial factor]
\label{rem:radial}
The theorem is a statement about the compact unit sphere and finite ordinary cellular chains. Sommen's original calculation concerns $\Lambda^2\R^4\setminus\{0\}\cong(0,\infty)\times S^5$ and treats the open radial factor $R_+$ as an independent morphological quantity. An open cell by itself is not a finite CW complex whose ordinary cellular inventory is the monomial $t$. Thus the radial homeomorphism transports the orbit partition, but it does not justify multiplying the finite ordinary-chain polynomial by $t$. A literal realization of Sommen's noncompact bills requires locally finite or Borel--Moore chains, compactly supported invariants, or a constructible Grothendieck theory. The result above identifies the angular source of the discrepancy and does not claim a realization of every subtraction or division used in the original calculus.
\end{remark}

\begin{corollary}[Diagnosis of the inconsistency]
The mismatch in Sommen's $n=4$ calculation is not caused by an incorrect singular-value partition, a missing orbit, or varying stabilizer on a stated stratum. It comes from two related suppressions:
\begin{enumerate}[label=\textup{(\roman*)}]
 \item the calculation compares a midpoint-subdivided inventory of the join cylinder with an unsubdivided quantity for the ambient sphere;
 \item ordinary multiplication records $S^2\times S^2\times(0,1)$ but not the collapse of one sphere factor at each join endpoint.
\end{enumerate}
The apparent excess is precisely a multiple of $1+t$, hence it vanishes under homology and Euler evaluation but remains visible at the raw cell-count level.
\end{corollary}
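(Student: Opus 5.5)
The plan is to assemble the corollary from results already proved: \Cref{thm:bivjoin} handles the partition and stabilizer claims, \Cref{thm:bill} the excess, and \Cref{thm:dmc} the endpoint collapse.

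First I would dispose of the negative claims: the partition is correct, no orbit is missing, and stabilizers do not vary along a stratum. By \Cref{thm:bivjoin}, unit normalization turns Sommen's three cases into the chambers, the midpoint $\theta=\pi/4$, and the two ends of $S_+*S_-$. These pieces are pairwise disjoint and their union is $S(\Lambda^2\R^4)$, so no orbit is missing. The map $\Phi$ is $SO(V)$-equivariant, and $SO(4)$ acts on $S_+\times S_-$ through the transitive action of $SO(3)\times SO(3)$. Hence each stratum is a single orbit type: the ends are homogeneous $S^2$'s, and each slice $\theta=\text{const}$ is an orbit $S^2\times S^2$. In particular the stabilizer is constant along each stated stratum. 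I would add a brief remark that the radial factor is a trivial product $(0,\infty)\times S^5$ (\Cref{rem:radial}), so it introduces no orbit-type change.

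Next comes the positive diagnosis. For (i), I would compute $\Mpol_{\mathrm{split}}$ and $\Mpol_{\mathrm{coarse}}$ as in the preceding discussion. \Cref{thm:bill} identifies their difference $(1+t)\Mpol_{S^2}^2$ with the contractible relative complex $C_*(S^2\times S^2)\otimes E_1$ of midpoint subdivision. So the comparison in Sommen's calculation sets a subdivided inventory against the unsubdivided ambient quantity $1+t^5$ (or any unsplit structure). For (ii), I would invoke \Cref{thm:dmc} and the join description in \Cref{sec:joins}. The chamber term $tM_2^2$ becomes a chain model of $S^5$ only through the attaching maps $(p_{-\#},-p_{+\#})$, that is, the collapse of one factor at each end. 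Scalar multiplication records only the cell count of $S^2\times S^2\times(0,1)$ and discards these maps. I would illustrate this with the example $\Mpol_{\mathrm{coarse}}=1+t^5+(1+t)(1+2t^2)$: cancellation of the $(1+t)$-pairs requires the boundary maps.

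Finally, I would treat the last sentence. The excess equals $(1+t)M_2^2$, which vanishes at $t=-1$, so \Cref{cor:euler} applies. Its homology is unchanged because $\Ppol_{J'}=\Ppol_J=1+t^5$. The raw counts differ by the nonzero polynomial $(1+t)M_2(t)^2$.

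The main obstacle is the step ``not caused by varying stabilizer''. This requires an honest check that $SO(4)$ acts transitively on each $\theta$-slice. That check depends on the double cover $SU(2)\times SU(2)\to SO(4)$ and on $\Lambda^2_\pm$ being its two $SO(3)$ factors. I would cite this standard fact (\cite{DonaldsonKronheimer1990}) rather than prove it. The rest is bookkeeping from \Cref{thm:bill,thm:dmc}.
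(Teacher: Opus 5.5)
Your proposal is correct and takes essentially the paper's route; the paper gives no separate proof and justifies the corollary by the preceding discussion: \Cref{thm:bivjoin} for the partition, the double-mapping-cylinder inventory $\Mpol_{\mathrm{coarse}}=tM_2^2+2M_2$ for the suppressed endpoint collapse, and \Cref{thm:bill} for the $(1+t)M_2(t)^2$ subdivision excess. Your transitivity check via $SO(4)\to SO(\Lambda^2_+)\times SO(\Lambda^2_-)$ is more careful than the paper, and it also shows the midpoint slice has the same stabilizer $T^2$ as the chambers, which is why isolating it is pure subdivision; note, however, that measured against $1+t^5$ the total excess combines (i) and (ii) as $(1+t)\bigl(M_2^2+1+2t^2\bigr)$ for $M_2=1+t^2$, not $(1+t)M_2^2$ alone, though it is still a multiple of $1+t$.
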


\begin{remark}[Comparison with Sommen's calculation]
The Hodge decomposition and the homeomorphism $S^2*S^2\cong S^5$ are standard. Here they identify Sommen's three strata with a single join refinement, whose relative complex is the unit-labelled complex of \Cref{sec:refinement}. This accounts for the angular part of the discrepancy in \cite{Sommen2016}. The noncompact radial factor remains outside the finite ordinary-chain model; see \Cref{rem:radial}.
\end{remark}

\section{The five-dimensional bivector sphere}
\label{sec:bivector5}

In dimension five the Hodge decomposition used above is no longer available, and the orbit geometry is different. Identify $\Lambda^2\R^5$ with $\mathfrak{so}(5)$ using the Euclidean metric. The $SO(5)$ action is the adjoint action.

\begin{theorem}[Cohomogeneity-one orbit diagram]
\label{thm:so5diagram}
Every unit bivector is $SO(5)$-conjugate to
\[
 b_\theta=\cos\theta\,e_{12}+\sin\theta\,e_{34},
 \qquad 0\le\theta\le\frac\pi4.
\]
The orbit space of $S(\Lambda^2\R^5)=S^9$ is this interval. Its group diagram is
\[
 T^2\subset
 \left\{SO(2)\times SO(3),\ U(2)\right\}
 \subset SO(5).
\]
Thus the principal and singular orbits are
\begin{align*}
 P&=SO(5)/T^2,\\
 B_-&=SO(5)/(SO(2)\times SO(3))\cong\widetilde G_2(\R^5),\\
 B_+&=SO(5)/U(2)\cong\mathbb{CP}^3.
\end{align*}
Here $\widetilde G_2(\R^5)$ denotes the oriented Grassmannian of $2$-planes in $\R^5$.
Moreover $K_\pm/T^2\cong S^2$, so both singular orbits have codimension three and
\[
 S^9\cong SO(5)\times_{SO(2)\times SO(3)}D^3
 \ \cup_{SO(5)/T^2}\
 SO(5)\times_{U(2)}D^3.
\]
\end{theorem}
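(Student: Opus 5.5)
The plan is to use the identification \(\Lambda^2\R^5\cong\mathfrak{so}(5)\) and reduce everything to the adjoint action restricted to a maximal torus. Let \(\mathfrak t=\operatorname{span}\{e_{12},e_{34}\}\) be the Lie algebra of the maximal torus \(T^2=SO(2)\times SO(2)\times\{1\}\). Every element of \(\mathfrak{so}(5)\) is conjugate into \(\mathfrak t\). Concretely, a skew matrix has an orthonormal normal form \(r_1e_{12}+r_2e_{34}\), and the residual \(1\)-dimensional kernel is absorbed by the fifth basis vector.

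The Weyl group of \(B_2\) acts on \(\mathfrak t\) by signed permutations of \((r_1,r_2)\). The key point is that, in odd dimension, the sign of \(r_2\) can be flipped inside \(SO(5)\). Take \(\operatorname{diag}(1,1,1,-1,-1)\): it has determinant \(+1\) and sends \(e_{34}\) to \(-e_{34}\). Consequently every unit bivector is conjugate to \(b_\theta\) with \(r_1\ge r_2\ge0\), that is, \(0\le\theta\le\pi/4\).

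Distinct \(\theta\) give distinct orbits, because the singular values \(\{\cos\theta,\sin\theta\}\) are conjugation invariants. Hence the orbit space of the unit sphere \(S^9\) is the interval \([0,\pi/4]\), and the action has cohomogeneity one.

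Next I would compute the stabilizers directly as centralizers, \(\operatorname{Stab}(b)=\{g: gbg^{-1}=b\}\), viewing \(b\) as a skew endomorphism of \(\R^5\).

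For \(0<\theta<\pi/4\), \(b_\theta\) has three eigenspaces: the planes \(\langle e_1,e_2\rangle\) and \(\langle e_3,e_4\rangle\), on which it acts by rotations of distinct nonzero speeds, and the kernel line \(\langle e_5\rangle\). A commuting orthogonal \(g\) preserves each eigenspace and commutes with the complex structure on each plane. So \(g\in SO(2)\times SO(2)\times\{\pm1\}\) up to determinant, and since the two blocks already have determinant \(1\), the determinant condition forces \(+1\) on \(e_5\). This gives \(H=T^2\), and it is connected.

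At \(\theta=0\), \(b=e_{12}\) has kernel \(\langle e_3,e_4,e_5\rangle\). The centralizer is \(SO(2)\times O(3)\cap SO(5)\), which I need to check carefully. A reflection on the kernel would require determinant \(-1\) on the plane \(\langle e_1,e_2\rangle\), but that block must commute with \(J\) and therefore lies in \(SO(2)\). So the centralizer is \(SO(2)\times SO(3)\), and \(SO(5)/(SO(2)\times SO(3))\) is the oriented Grassmannian \(\widetilde G_2(\R^5)\), with the map sending \(b\) to its oriented support plane.

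At \(\theta=\pi/4\), \(b\) is \(1/\sqrt2\) times a complex structure \(J\) on \(\R^4=\langle e_1,\dots,e_4\rangle\), with kernel \(\langle e_5\rangle\). The centralizer is \(U(2)\times\{1\}\), again by the determinant argument, since \(U(2)\subset SO(4)\). The quotient \(SO(5)/U(2)\) is the space of orthogonal complex structures on \(4\)-planes in \(\R^5\) compatible with the orientation. I would identify it with \(SO(6)/U(3)\cong\mathbb{CP}^3\) via the standard isomorphism \(SO(5)/U(2)\cong SO(6)/U(3)\), since \(SO(5)\) acts transitively on \(SO(6)/U(3)\) with stabilizer \(U(2)\). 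Alternatively one can use \(Sp(2)/(U(1)\times Sp(1))=\mathbb{CP}^3\) through \(Spin(5)=Sp(2)\).

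Then \(K_-/T^2=SO(3)/SO(2)\cong S^2\) and \(K_+/T^2=U(2)/T^2\cong S^2\), so both slices are \(3\)-discs. As a dimension check, \(\dim P=10-2=8\), and \(\dim B_\pm=10-4=6\), giving codimension \(3\) in the \(9\)-dimensional orbit \(\dim P+1=9\).

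The displayed disc-bundle decomposition of \(S^9\) then follows from \Cref{cor:cohom1}, since \(S^9\) is a compact manifold with a cohomogeneity-one action of a compact connected group. The isotropy representation of \(K_\pm\) on the normal slice is transitive on the unit sphere \(K_\pm/H\), so the slice theorem gives the tubes \(SO(5)\times_{K_\pm}D^3\).

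The main obstacle is the \(\mathbb{CP}^3\) identification together with the careful determinant bookkeeping in the centralizers. For the identification, I would present the \(SO(6)/U(3)\) route explicitly: extend a complex structure on a \(4\)-plane \(W\subset\R^5\) to \(\R^6=\R^5\oplus\R\) by rotating \(W^\perp\oplus\R\), check that this is an \(SO(5)\)-equivariant bijection onto one component of the orthogonal complex structures on \(\R^6\), and then cite \(SO(6)/U(3)\cong SU(4)/S(U(1)\times U(3))=\mathbb{CP}^3\).
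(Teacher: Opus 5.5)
Your proposal is correct and follows the paper's route: the orthogonal normal form of a skew $5\times5$ matrix, reduction by the $B_2$ Weyl group to the chamber $0\le\theta\le\pi/4$, centralizer computations at interior points and at the two endpoints, and then \Cref{cor:cohom1} for the disk-bundle decomposition. You also fill in details the paper leaves implicit or calls standard, and these are sound: the sign-flipping element $\operatorname{diag}(1,1,1,-1,-1)$, the determinant bookkeeping that rules out extra components of the stabilizers, injectivity of $\theta\mapsto$ orbit via singular values, and the identification $SO(5)/U(2)\cong SO(6)/U(3)\cong\mathbb{CP}^3$; the one slip is wording, since in your codimension count the $9$-dimensional space is the sphere $S^9$, not an orbit.
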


\begin{proof}
The orthogonal normal form of a real skew-symmetric $5\times5$ matrix consists of two $2\times2$ rotation blocks and one zero block. Unit norm and the type-$B_2$ Weyl group reduce the parameters to the stated chamber. In its interior the stabilizer is the maximal torus $T^2$. At $\theta=0$ the three-dimensional kernel can be rotated, giving $SO(2)\times SO(3)$. At $\theta=\pi/4$ the two rotation blocks have equal speed and the stabilizer on their four-plane enlarges to $U(2)$. The homogeneous identifications and the isoparametric interpretation are standard; see \cite{QianTangYan2023}. Since both quotient spheres $K_\pm/T^2$ are $S^2$, \Cref{cor:cohom1} gives the disk-bundle decomposition.
\end{proof}

\begin{figure}[t]
\centering
\begin{tikzpicture}[x=1cm,y=1cm,font=\small]
  \node[font=\bfseries] at (5,3.55) {$n=4$: the rank-two locus is a regular midpoint};
  \draw[thick] (0,2.55)--(10,2.55);
  \fill (0,2.55) circle (1.8pt);
  \fill (5,2.55) circle (2.3pt);
  \fill (10,2.55) circle (1.8pt);
  \node[below=5pt,align=center] at (0,2.55) {equal singular\\values $S^2$};
  \node[below=5pt,align=center] at (5,2.55) {rank two\\$S^2\times S^2$};
  \node[below=5pt,align=center] at (10,2.55) {equal singular\\values $S^2$};
  \node[above=4pt] at (2.5,2.55) {generic};
  \node[above=4pt] at (7.5,2.55) {generic};

  \node[font=\bfseries] at (5,1.15) {$n=5$: both endpoint orbits are singular};
  \draw[thick] (0,0.15)--(10,0.15);
  \fill (0,0.15) circle (2.3pt);
  \fill (10,0.15) circle (2.3pt);
  \node[below=5pt,align=center] at (0,0.15) {rank two\\$\widetilde G_2(\R^5)$};
  \node[below=5pt,align=center] at (10,0.15) {equal singular\\values $\mathbb{CP}^3$};
  \node[above=4pt,align=center] at (5,0.15) {principal orbit $SO(5)/T^2$};
\end{tikzpicture}
\caption{The orbit-interval distinction behind the two bivector bills. In dimension four, the rank-two locus is a regular midpoint of the join parameter. In dimension five, the rank-two and equal-singular-value loci are the two singular endpoints.}
\label{fig:orbitintervals}
\end{figure}

The two endpoint strata are Sommen's rank-two and equal-singular-value cases. Unlike the $n=4$ rank-two locus, they are singular orbits of the cohomogeneity-one action rather than strata introduced by subdividing the orbit interval.

\begin{theorem}[Orbit-stratum bill for standard Bruhat models]
\label{thm:n5bill}
Choose the standard Bruhat CW structures for the full and partial flag manifolds in \Cref{thm:so5diagram}, and replace the homogeneous projections by cellular approximations. Put
\[
 Q(t)=1+t^2+t^4+t^6,
 \qquad
 F(t)=1+2t^2+2t^4+2t^6+t^8.
\]
Then
\[
 \Mpol_{B_-}=\Mpol_{B_+}=Q,
 \qquad
 \Mpol_P=F.
\]
The cellular approximations determine a double mapping cylinder homotopy equivalent to $S^9$, and its inventory is
\begin{align}
 \Mpol_{\mathrm{orb}}(t)
 &=2Q(t)+tF(t)\label{eq:n5orb}\\
 &=1+t^9+(1+t)(1+2t^2+2t^4+2t^6).\label{eq:n5normal}
\end{align}
The resulting mapping-cone complex has integral elementary decomposition
\[
 C_{\mathrm{orb}}
 \cong
 \Z[0]\oplus\Z[9]
 \oplus E_1
 \oplus E_3^{\oplus2}
 \oplus E_5^{\oplus2}
 \oplus E_7^{\oplus2}.
\]
In particular, for these Bruhat models all seven excess pairs are unit-labelled and the homological quantity is $1+t^9$.
\end{theorem}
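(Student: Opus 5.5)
The plan has four steps: compute the Bruhat cell counts, assemble the inventory with \Cref{thm:dmc}, identify the homotopy type with \Cref{cor:cohom1}, and read off the integral elementary decomposition from \Cref{thm:smith} and \Cref{thm:normal}.

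\emph{Cell counts.} The Bruhat cells of $G/K$ are indexed by minimal coset representatives $W/W_K$, with cell dimension equal to Bruhat length. Here $W=W(B_2)$ has order $8$ and Poincar\'e polynomial $(1+t^2)(1+t^2+t^4+t^6)$ in real dimension, since the lengths are doubled. This product equals $F$, which gives $\Mpol_P=F$. For $K_\pm$ the Weyl group $W_{K_\pm}$ is generated by the short reflection for $SO(2)\times SO(3)$ and the long reflection for $U(2)$. In either case
\[
 W/W_{K_\pm}
\]
has four elements, of lengths $0,1,2,3$, so $\Mpol_{B_\pm}=Q$. As a consistency check, $\widetilde G_2(\R^5)$ is a complex quadric $Q_3$ and $\mathbb{CP}^3$ has the same even cell counts. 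All cells are even-dimensional, so all differentials in $C_*(P)$ and $C_*(B_\pm)$ vanish.

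\emph{Inventory and homotopy type.} Cellular approximation replaces $p_\pm$ by cellular maps homotopic to them. A double mapping cylinder depends only on the homotopy classes of its maps, so \Cref{cor:cohom1} together with \Cref{thm:so5diagram} shows that the resulting complex is homotopy equivalent to $S^9$. \Cref{thm:dmc} then gives \eqref{eq:n5orb}, and \eqref{eq:n5normal} is a direct polynomial expansion: subtracting $1+t^9$ from $2Q+tF$ leaves a polynomial divisible by $1+t$.

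\emph{Integral decomposition.} The cone complex $C_{\mathrm{orb}}$ has
\[
 C_{2j}=C_{2j}(B_-)\oplus C_{2j}(B_+)\cong\Z^2 \quad (0\le j\le3),
 \qquad
 C_{2j+1}=C_{2j}(P).
\]
Because the internal differentials of all three complexes vanish, the only nonzero cone differentials are the maps
\[
 (p_{-\#},-p_{+\#}):C_{2j}(P)\to C_{2j}(B_-)\oplus C_{2j}(B_+),
\]
which send odd degrees to even degrees, together with $C_9=C_8(P)\to C_8=0$.

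Integrally, $H_*(C_{\mathrm{orb}})\cong H_*(S^9)$, which is torsion-free and vanishes in degrees $1$ through $8$. Rank counting then fixes each map. In degree $0$ the map $\Z\to\Z^2$ has rank $1$. In degrees $2j$ with $1\le j\le3$ the map $\Z^2\to\Z^2$ must be surjective, since $H_{2j}=0$, and injective, since $H_{2j+1}=0$. The top cell $C_9\cong\Z$ is a cycle and gives $\Z[9]$.

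Now \Cref{thm:smith} applies with all $s_j=0$ because there is no torsion. The ranks are $r_1=1$ and $r_3=r_5=r_7=2$, which yields exactly $E_1\oplus E_3^{\oplus2}\oplus E_5^{\oplus2}\oplus E_7^{\oplus2}$. Integral surjectivity, or equivalently the vanishing of torsion, is what makes every label $[1]$.

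\emph{Main obstacle.} The delicate point is the homotopy-type claim for the \emph{cellularly approximated} span. It needs the homotopy invariance of double mapping cylinders, plus the observation that the integral homology computation is then forced and need not be done cell by cell. Explicit matrices for $p_{\pm\#}$ are never required, since the rank and surjectivity conclusions follow from $H_*(S^9;\Z)$ alone.
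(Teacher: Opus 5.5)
Your proposal is correct and follows the paper's proof essentially step by step. You count Bruhat cells via Weyl cosets, use cellular approximation together with homotopy invariance of the double mapping cylinder, apply \Cref{thm:dmc} for the inventory, and feed the torsion-free homology of $S^9$ into \Cref{thm:smith}. Your explicit observation that all internal differentials vanish, because every Bruhat cell is even-dimensional, so that each $d_{2j+1}$ is forced to be an integral isomorphism (and $d_1$ injective with primitive image), just spells out the step the paper summarizes as ``forces.''
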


\begin{proof}
The partial flag manifolds $\mathbb{CP}^3$ and $\widetilde G_2(\R^5)$ have Bruhat decompositions with one cell in each of the real dimensions $0,2,4,6$. The full flag manifold $SO(5)/T^2$ has cells indexed by the Weyl group of type $B_2$; its length distribution is $1,2,2,2,1$, giving $F(t)$. By the cellular approximation theorem, the two homogeneous projections may be replaced by cellular maps homotopic to them. Homotopic attaching maps give homotopy-equivalent double mapping cylinders. Therefore \Cref{thm:dmc} gives \eqref{eq:n5orb}, and direct expansion gives \eqref{eq:n5normal}.

The resulting mapping cone is the cellular chain complex of a finite CW complex homotopy equivalent to $S^9$. Hence its integral homology is free of rank one in degrees $0$ and $9$ and zero otherwise. Applying \Cref{thm:smith} to the cell ranks in \eqref{eq:n5normal} forces one unit pair in degrees $(1,0)$, two in $(3,2)$, two in $(5,4)$, and two in $(7,6)$.
\end{proof}

\begin{corollary}[A smaller Thom bill]
A Thom-adapted disk-bundle CW structure has inventory
\[
 \Mpol_{\mathrm{Thom}}(t)
 =Q(t)+t^3Q(t)
 =1+t^9+(1+t)(t^2+t^4+t^6),
\]
and chain decomposition
\[
 C_{\mathrm{Thom}}
 \cong \Z[0]\oplus\Z[9]\oplus E_3\oplus E_5\oplus E_7.
\]
Hence two cellular models of the same cohomogeneity-one geometry may differ by additional unit-labelled pairs.
\end{corollary}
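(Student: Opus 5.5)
The corollary concerns the Thom-adapted cell structure from the remark after \Cref{cor:cohom1}, applied to the decomposition of \Cref{thm:so5diagram}. The plan is to build that structure explicitly, read off its inventory, and then use the known homology of $S^9$ together with \Cref{thm:smith} to obtain the chain decomposition. This mirrors the proof of \Cref{thm:n5bill}.

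\emph{Construction.} By \Cref{thm:so5diagram},
\[
 S^9\cong D_-\cup_{SO(5)/T^2}D_+,\qquad
 D_-=SO(5)\times_{SO(2)\times SO(3)}D^3,\qquad
 D_+=SO(5)\times_{U(2)}D^3 .
\]
\begin{enumerate}[label=\textup{(\alph*)}]
 \item The negative disk bundle $D_-$ deformation retracts onto $B_-\cong\widetilde G_2(\R^5)$. Up to homotopy we replace it by its Bruhat structure, which has one cell in each dimension $0,2,4,6$ and so contributes $Q(t)$.
 \item Next we attach $D_+$ relative to its sphere bundle $\partial D_+\cong SO(5)/T^2$. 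The rank-three bundle $D_+\to\mathbb{CP}^3$ is orientable over $\Z$, because $\mathbb{CP}^3$ is simply connected. So the pair $(D_+,\partial D_+)$ is the Thom space of $\mathbb{CP}^3$.
 \item Over each Bruhat cell of $\mathbb{CP}^3$ the bundle is trivial, since cells are contractible. Taking the product of each cell with the open $3$-disk therefore gives a relative CW structure with one cell in each dimension $3,5,7,9$. This contributes $t^3Q(t)$.
\end{enumerate}
The attaching maps go into the already-built part, made cellular by cellular approximation. The inventory is therefore $Q+t^3Q=1+t^2+t^3+t^4+t^5+t^6+t^7+t^9$. Direct expansion shows this equals $1+t^9+(1+t)(t^2+t^4+t^6)$.

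\emph{Chain decomposition.} The resulting finite CW complex is homotopy equivalent to $S^9$. Its integral homology is therefore $\Z$ in degrees $0$ and $9$ and zero otherwise, with no torsion. Applying \Cref{thm:smith} with all $s_j=0$, the cell ranks force the following.
\begin{itemize}
 \item The cells in degrees $0$ and $9$ survive as $\Z[0]\oplus\Z[9]$.
 \item The single cells in degrees $2$ and $3$ must cancel, since $H_2=H_3=0$. So $\partial_3$ has rank one and gives one pair $T_3(d)$. Torsion-freeness of $H_2$ forces $d=1$, i.e.\ the pair is $E_3$.
 \item The same argument in degrees $(4,5)$ and $(6,7)$ gives $E_5$ and $E_7$.
\end{itemize}

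\emph{Comparison and main obstacle.} Comparing with \Cref{thm:n5bill}, the Bruhat orbit model carries seven unit pairs and the Thom model only three. This proves the concluding statement that two cellular models of the same geometry can differ by unit-labelled pairs.

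The main obstacle is justifying the Thom CW structure, i.e.\ that the relative disk bundle really admits cells of the form (Bruhat cell)$\times$(open $3$-cell) with cellular attaching maps. The plan is to pull back the trivializations over closed Bruhat cells via their characteristic maps and invoke the cellular approximation theorem for the attaching maps. Orientability of the bundle needs $H^1(\mathbb{CP}^3;\Z/2)=0$. Once the complex exists, the algebra is forced.
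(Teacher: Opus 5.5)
Your proposal is correct and follows the paper's own argument: retract one rank-three disk bundle onto its singular orbit, attach the other relative to the common sphere bundle so that the Thom cells are the Bruhat cells of $\mathbb{CP}^3$ shifted up by three, and then let the homology of $S^9$ together with \Cref{thm:smith} force the unit pairs $E_3,E_5,E_7$. You supply more detail than the paper, namely trivializing the bundle over the characteristic maps of the Bruhat cells and using cellular approximation for the attaching maps. Orientability is not actually needed for the cell count; it would only matter for a Thom-isomorphism computation, which you never use.
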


\begin{proof}
Start with one rank-three disk bundle, which retracts to its singular orbit, and attach the other relative to the common sphere bundle. The Thom cells are shifted upward by three. The displayed normal form follows from \Cref{thm:smith} and the homology of $S^9$.
\end{proof}

\begin{corollary}[Diagnosis of Sommen's $n=5$ bill]
After unit-sphere normalization, Sommen's three geometric cases are the orbit types of the $SO(5)$ action, but the scalar sum of their product quantities does not determine the global decomposition. The additional datum is the span
\[
 \widetilde G_2(\R^5)
 \xleftarrow{\ p_-\ }
 SO(5)/T^2
 \xrightarrow{\ p_+\ }
 \mathbb{CP}^3.
\]
The mapping-cone differential contains the two endpoint projections. Suppressing them turns a double mapping cylinder into an ordinary sum and product and produces the apparent angular excess. As in \Cref{rem:radial}, this finite statement does not by itself realize Sommen's open radial factor.
\end{corollary}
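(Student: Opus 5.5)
The statement to be proved is the last corollary: the diagnosis of Sommen's $n=5$ bill. It has three parts. The first is that after unit normalization Sommen's three cases are the $SO(5)$ orbit types. The second is that the scalar sum of product quantities does not determine the global decomposition. The third is that the missing datum is the span of homogeneous projections, whose suppression produces the angular excess. I would read all three off \Cref{thm:so5diagram}, \Cref{thm:dmc} and \Cref{thm:n5bill}.

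First I identify the strata. A unit bivector $b\in\Lambda^2\R^5$ has singular values $r_1\ge r_2\ge0$ with $r_1^2+r_2^2=1$. By \Cref{thm:so5diagram} it is conjugate to $b_\theta$ with $0\le\theta\le\pi/4$. Rank two means $r_2=0$, i.e. $\theta=0$, which is the orbit $\widetilde G_2(\R^5)$. Equal singular values means $\theta=\pi/4$, which is the orbit $\mathbb{CP}^3$. Distinct positive singular values means the open interval, whose orbits are all $SO(5)/T^2$. So Sommen's partition is exactly the orbit-type partition, and the generic stratum is $SO(5)/T^2\times(0,\pi/4)$.

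Next I show that the inventory does not determine the space. \Cref{cor:cohom1} and \Cref{thm:so5diagram} give $S^9$ as the double mapping cylinder of $p_\pm$. \Cref{thm:dmc} models its chains by $\operatorname{Cone}(p_{-\#},-p_{+\#})$, whose differential contains the two endpoint projections explicitly. The cell inventory \eqref{eq:n5orb}, namely $2Q+tF$, is independent of these maps. To see that the maps matter, I replace $p_\pm$ by constant maps to the base points, keeping the same Bruhat cells. The inventory is still $2Q+tF$. But the result is the wedge $B_-\vee B_+\vee\Sigma P$ (up to homotopy), whose Poincar\'e polynomial is $2Q+tF-1-t$, up to the basepoint adjustment. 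This is not $1+t^9$. So the three scalar quantities cannot distinguish the correct decomposition, and the attaching data, i.e. the span, is what is needed.

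Finally I identify the excess. By \eqref{eq:n5normal}, the naive sum exceeds $\Ppol_{S^9}=1+t^9$ by $(1+t)(1+2t^2+2t^4+2t^6)$. With the actual projections this excess is the seven unit pairs of \Cref{thm:n5bill}. It is nonzero only because the rank of the cone differential is nonzero, and the cone differential consists of the endpoint terms $p_{\pm\#}$ together with the internal differentials, which vanish for Bruhat cells. So dropping the projections sets $\partial=0$, and then $\Mpol$ and $\Ppol$ coincide for the sum. The appearance of an excess is exactly the loss of the projection terms. The remark about the radial factor follows verbatim from \Cref{rem:radial}.

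The main obstacle is making "suppressing the maps" precise, since a scalar sum is not itself a chain complex. I would formalize it as the comparison between the cone with zero attaching maps and the true cone. The point to check is the claim that the Bruhat internal differentials vanish. Over $\Z$ this fails for $\widetilde G_2(\R^5)$, whose cellular homology has $2$-torsion. So I would argue rationally, or simply via the rank of the cone differential, rather than rely on that claim.
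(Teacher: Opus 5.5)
Your argument is correct and takes essentially the paper's route. The paper gives no separate proof; the corollary is read off from \Cref{thm:so5diagram}, \Cref{cor:cohom1}, \Cref{thm:dmc} and \Cref{thm:n5bill} exactly as you do. Your constant-map double mapping cylinder (same inventory $2Q+tF$, homology $2Q+tF-(1+t)\neq 1+t^9$) is a useful explicit witness for the non-determination, which the paper states only in the general form of \Cref{thm:dmc}.

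One correction: your closing worry is unfounded and contradicts your own earlier (correct) claim that the Bruhat internal differentials vanish. The space $\widetilde G_2(\R^5)=SO(5)/(SO(2)\times SO(3))$ is the smooth complex quadric threefold in $\mathbb{CP}^4$. Its Bruhat cells lie only in dimensions $0,2,4,6$, so every internal differential of $B_\pm$ and $P$ vanishes over $\Z$ for parity reasons, and $H_*(\widetilde G_2(\R^5);\Z)$ is free. The $2$-torsion you have in mind belongs to the unoriented Grassmannian $G_2(\R^5)$, so no rational fallback is needed.
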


\begin{remark}[Why the two dimensions differ]
For $n=4$, the rank-two locus lies in the interior of the orbit interval and its separation creates a subdivision pair. For $n=5$, the rank-two locus is a singular endpoint. The correction is therefore not a midpoint subdivision but the mapping-cone differential of the cohomogeneity-one diagram. This distinction is invisible in the original scalar notation.
\end{remark}

\section{A partial six-dimensional bivector calculation}
\label{sec:bivector6}

The next case is already higher rank but can still be described without a full general theory. Identify $\Lambda^2\R^6$ with $\mathfrak{so}(6)$. The adjoint representation is polar, with a Cartan subalgebra as section and the Weyl group of type $D_3\cong A_3$ acting on it \cite{DadokKac1985,HsiangPalaisTerng1988}.

\begin{proposition}[The spherical $D_3$ chamber]
\label{prop:so6chamber}
Every unit bivector is $SO(6)$-conjugate to
\[
 b_{\lambda}=\lambda_1e_{12}+\lambda_2e_{34}+\lambda_3e_{56},
\]
with
\[
 \lambda_1\ge\lambda_2\ge |\lambda_3|,
 \qquad
 \lambda_1^2+\lambda_2^2+\lambda_3^2=1.
\]
The orbit space of $S(\Lambda^2\R^6)=S^{14}$ is therefore a spherical triangle $\Delta$. The isotropy groups by open face are:
\begin{center}
\begin{tabular}{lll}
\toprule
face & stabilizer & orbit \\
\midrule
interior & $T^3$ & $SO(6)/T^3$ \\
any open edge & $U(2)\times SO(2)$ & $SO(6)/(U(2)\times SO(2))$ \\
$\lambda_1=\lambda_2=\lambda_3$ & $U(3)$ & $SO(6)/U(3)\cong\mathbb{CP}^3$ \\
$\lambda_1=\lambda_2=-\lambda_3$ & $U(3)$ & $SO(6)/U(3)\cong\mathbb{CP}^3$ \\
$\lambda_2=\lambda_3=0$ & $SO(2)\times SO(4)$ & $\widetilde G_2(\R^6)$ \\
\bottomrule
\end{tabular}
\end{center}
The two $U(3)$ vertices are distinct $SO(6)$-orbits, distinguished by the sign of the Pfaffian.
\end{proposition}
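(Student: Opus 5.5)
I will follow the same pattern as the proof of \Cref{thm:so5diagram}: orthogonal normal form, then a Weyl-group reduction, then stabilizers read off face by face.

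\textbf{Normal form and chamber.} A real skew-symmetric $6\times6$ matrix is orthogonally conjugate to a block sum of three $2\times2$ rotation blocks with parameters $\lambda_1,\lambda_2,\lambda_3$. A priori this conjugation is by $O(6)$, and I will correct to $SO(6)$ by a reflection inside one block. That reflection changes the sign of one $\lambda_i$. The Weyl group of $D_3$ acts on the Cartan section $\{\lambda_1e_{12}+\lambda_2e_{34}+\lambda_3e_{56}\}$ by permutations together with an \emph{even} number of sign changes. Its fundamental domain is therefore $\lambda_1\ge\lambda_2\ge|\lambda_3|$. By polar-representation theory \cite{DadokKac1985}, each orbit meets the section in exactly one Weyl orbit, so the orbit space of the unit sphere is the intersection of this closed chamber with $S^2$, a spherical triangle $\Delta$.

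Its three vertices are the extreme rays of the cone $\lambda_1\ge\lambda_2\ge|\lambda_3|$:
\[
(1,1,1)/\sqrt3,\qquad (1,1,-1)/\sqrt3,\qquad (1,0,0).
\]
Its edges lie on the walls $\lambda_1=\lambda_2$, $\lambda_2=\lambda_3$ and $\lambda_2=-\lambda_3$.

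\textbf{Stabilizers.} The stabilizer of $b_\lambda$ is the centralizer in $SO(6)$ of the corresponding element $X\in\mathfrak{so}(6)$. Since $X^2$ is a symmetric matrix, its eigenspaces for the eigenvalues $-\lambda_i^2$ are preserved, and on each eigenspace $X$ acts as a complex structure or as zero. The centralizer is then computed on each eigenspace.
\begin{enumerate}[label=\textup{(\roman*)}]
 \item In the interior of $\Delta$ the $|\lambda_i|$ are distinct and nonzero, so the centralizer is $SO(2)^3=T^3$.
 \item On an open edge exactly two of the $|\lambda_i|$ coincide and are nonzero, and no $\lambda_i$ vanishes, so the centralizer is $U(2)\times SO(2)$. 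I need to check that $\lambda_3\neq0$ on the open edges: $\lambda_3=0$ forces $\lambda_2=0$ on the two lower walls, so it occurs only at the vertex $(1,0,0)$.
 \item At a vertex with all $|\lambda_i|$ equal, $X$ is a multiple of a single orthogonal complex structure $J$ on $\R^6$. Its centralizer in $SO(6)$ is $U(3)$, and $SO(6)/U(3)\cong\mathbb{CP}^3$ via the standard isomorphism $SO(6)/U(3)\cong SU(4)/U(3)$.
 \item At $(1,0,0)$ the stabilizer is $SO(2)\times SO(4)$, with orbit $\widetilde G_2(\R^6)$.
\end{enumerate}

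\textbf{Distinguishing the two $U(3)$ vertices.} The Pfaffian is $SO(6)$-invariant and equals $\lambda_1\lambda_2\lambda_3$ in the normal form. It therefore takes opposite values at the two $U(3)$ vertices, so these vertices are distinct $SO(6)$-orbits (they are exchanged only by $O(6)\setminus SO(6)$).

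\textbf{Main obstacle.} The delicate step is justifying that the $D_3$ chamber is exactly $\lambda_1\ge\lambda_2\ge|\lambda_3|$. This requires checking that odd sign changes are \emph{not} realized by $SO(6)$ on the section, which is precisely the Pfaffian argument above. A secondary point is checking that the edge stabilizers are connected, as the centralizer computation indicates.
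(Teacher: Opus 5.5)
Your route is the paper's (orthogonal normal form, $D_3$ Weyl reduction, centralizers face by face). Your Pfaffian argument for the two $U(3)$ vertices fills in a point the paper only asserts. There is, however, a concrete error in steps (i) and (ii).

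The roots of $D_3$ are $\pm e_i\pm e_j$ with $i\neq j$, so the hyperplane $\lambda_3=0$ is \emph{not} a wall of the chamber. It cuts through the open triangle along the arc $\lambda_1>\lambda_2>0=\lambda_3$ (for example $(2,1,0)/\sqrt5$). It also meets the open edge $\lambda_1=\lambda_2$ at $(1,1,0)/\sqrt2$. This arc, joining the vertex $(1,0,0)$ to the midpoint of the top edge, is exactly the zero set of the Pfaffian in $\Delta$.

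Your check examined only the walls $\lambda_2=\pm\lambda_3$ and overlooked the wall $\lambda_1=\lambda_2$. Hence both ``the $|\lambda_i|$ are nonzero in the interior'' and ``no $\lambda_i$ vanishes on an open edge'' are false. At these points $X$ has a two-dimensional kernel $\operatorname{span}(e_5,e_6)$, a case your eigenspace computation does not treat.

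The stated stabilizers survive, but you need one more step. On the kernel, the $O(6)$-centralizer contains all of $O(2)$. The factors on the nonzero eigenspaces ($SO(2)\times SO(2)$, respectively $U(2)$) have determinant one. Intersecting with $SO(6)$ therefore leaves only $SO(2)$ on the kernel, so the stabilizers are still $T^3$ and $U(2)\times SO(2)$.

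Alternatively, characterize the open chamber as the set of regular elements (no root vanishes), as the paper does. Then use the fact that an adjoint stabilizer in a compact connected group is the centralizer of a torus, hence connected. That also settles your ``secondary point'' about connectedness of the edge stabilizers.
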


\begin{proof}
The orthogonal normal form of a real skew-symmetric $6\times6$ matrix has three $2\times2$ rotation blocks. The type-$D_3$ Weyl group permutes the three block parameters and changes an even number of their signs, giving the displayed chamber. For a regular point, only independent rotations in the three coordinate $2$-planes commute with $b_\lambda$, hence the stabilizer is $T^3$. Equality of two adjacent chamber coordinates combines the corresponding two complex lines and enlarges the centralizer to $U(2)\times SO(2)$. At the two equal-magnitude vertices the centralizer is $U(3)$; at the rank-two vertex the kernel is four-dimensional, giving $SO(2)\times SO(4)$. The quotient $SO(6)/U(3)$ is $\mathbb{CP}^3$, while $SO(6)/(SO(2)\times SO(4))$ is the oriented Grassmannian of $2$-planes.
\end{proof}

The chamber determines a finite face-poset diagram: whenever a face $F'$ lies in the boundary of $F$, one has an inclusion $H_F\subset H_{F'}$ and a homogeneous projection
\[
 SO(6)/H_F\longrightarrow SO(6)/H_{F'}.
\]
This is the two-dimensional analogue of the span used in the $n=5$ double mapping cylinder. The standard polar reconstruction writes the sphere as $(SO(6)\times\Delta)/\!\sim$, where points over a face are identified by its isotropy subgroup. Turning this quotient into a compatible cellular bar model is the remaining attachment problem.

\begin{proposition}[Bruhat face inventory]
\label{prop:n6inventory}
Put $q=t^2$. The standard Bruhat polynomials of the orbit types in \Cref{prop:so6chamber} are
\begin{align*}
 F(t)&=(1+q)(1+q+q^2)(1+q+q^2+q^3), && SO(6)/T^3,\\
 E(t)&=(1+q+q^2)(1+q+q^2+q^3), && SO(6)/(U(2)\times SO(2)),\\
 Q(t)&=1+q+q^2+q^3, && SO(6)/U(3),\\
 R(t)&=1+q+2q^2+q^3+q^4, && \widetilde G_2(\R^6).
\end{align*}
If one records the open two-dimensional chamber interior, its three open edges, and its three vertices formally, the resulting face inventory is
\[
 \mathcal I_6(t)=t^2F(t)+3tE(t)+2Q(t)+R(t).
\]
It satisfies the exact algebraic identity
\[
 \mathcal I_6(t)=1+t^{14}+(1+t)\mathcal B_6(t),
\]
where
\[
 \mathcal B_6(t)=
 2+t+3t^2+3t^3+4t^4+5t^5+3t^6+6t^7+t^8+5t^9+3t^{11}.
\]
In particular, this six-dimensional higher-rank bill passes the necessary nonnegativity test imposed by \Cref{thm:normal}.
\end{proposition}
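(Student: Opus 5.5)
The plan is to treat the statement as two independent parts: a topological computation of the four Bruhat polynomials, followed by a purely algebraic verification of the displayed identity. No attachment data are needed, since the proposition only concerns the formal face inventory.

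\textbf{Bruhat polynomials.} Each orbit in \Cref{prop:so6chamber} is a generalized flag manifold $G/L$, where $G=SO(6)$ and $L$ is the centralizer of a torus, with Weyl group $W_L\subset W=W(D_3)$. Its standard Bruhat decomposition has one cell for each minimal coset representative in $W/W_L$, and that cell has real dimension $2\ell(w)$. Hence its cell polynomial is
\[
\sum_{w\in W^L}q^{\ell(w)}=\frac{\sum_{w\in W}q^{\ell(w)}}{\sum_{w\in W_L}q^{\ell(w)}},
\qquad q=t^2 .
\]
For $W(D_3)\cong S_4$ the degrees are $2,3,4$, so the length generating function is $(1+q)(1+q+q^2)(1+q+q^2+q^3)$; this gives $F$. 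The factors for the subgroups are then:
\begin{itemize}
\item $W_L=S_2$ for $U(2)\times SO(2)$, with length function $1+q$; dividing gives $E$.
\item $W_L=S_3$ for $U(3)$, with length function $(1+q)(1+q+q^2)$; dividing gives $Q$, consistent with $\mathbb{CP}^3$.
\item $W_L=W(D_2)$ for $SO(2)\times SO(4)$, with length function $(1+q)^2$; dividing gives $(1+q+q^2)(1+q+q^2+q^3)/(1+q)=1+q+2q^2+q^3+q^4=R$.
\end{itemize}
As a cross-check, $R$ is the Poincaré polynomial of the complex quadric $Q^4\cong\widetilde G_2(\R^6)$. Because all cells are even-dimensional, \Cref{thm:normal} also confirms that each of these cell polynomials equals the corresponding Poincaré polynomial.

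\textbf{The inventory and the identity.} Following the face counts of $\Delta$, the interior, each open edge and each vertex are weighted by $t^2$, $t$ and $1$ respectively, times their orbit polynomials. The three vertex contributions are the two $U(3)$ vertices and one Grassmannian vertex, which gives $\mathcal I_6=t^2F+3tE+2Q+R$.

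To prove the identity, I first check divisibility of $\mathcal I_6-1-t^{14}$ by $1+t$ through evaluation at $t=-1$, as in \Cref{cor:euler}. Since $q=1$ there, $F(-1)=24$, $E(-1)=12$, $Q(-1)=4$ and $R(-1)=6$. Therefore
\[
\mathcal I_6(-1)=24-36+8+6=2=(1+t^{14})|_{t=-1},
\]
so the difference is divisible by $1+t$. Next I expand $\mathcal I_6$ coefficientwise in degrees $0$ through $14$. The even coefficients come from $2Q+R$ plus $F$ shifted by $2$, and the odd coefficients come from $3E$ shifted by $1$. Synthetic division of $\mathcal I_6-1-t^{14}$ by $1+t$ then determines the coefficients of the quotient recursively by $b_k=a_k-b_{k-1}$, and I compare these with $\mathcal B_6$. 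Nonnegativity is then read off directly from the listed coefficients.

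The only real obstacle is bookkeeping, namely expanding $F$, which has total $24$ and degree $12$ in $t$, together with the shifted sum, without error. As an internal check I will use that the leading term must cancel against $t^{14}$ and that the constant term satisfies $b_0=a_0-1=2$. Conceptually, the one point that needs care is the convention for the Bruhat cell dimensions $2\ell(w)$ for minimal coset representatives, which I take as standard.
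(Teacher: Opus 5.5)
Your proposal is correct and matches the paper's proof: the four Bruhat polynomials are the $D_3\cong A_3$ length polynomial divided by the length polynomials of the parabolic subgroups $A_1$, $A_2$ and $A_1\times A_1$, and the identity is a direct expansion. Carrying out your synthetic division on $\mathcal I_6-1-t^{14}=2+3t+4t^2+6t^3+7t^4+9t^5+8t^6+9t^7+7t^8+6t^9+5t^{10}+3t^{11}+3t^{12}$ returns exactly the coefficients of $\mathcal B_6$ with zero remainder; your Euler-evaluation check and the quadric cross-check for $R$ are harmless extras that the paper does not write out.
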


\begin{proof}
The full flag polynomial is the Weyl length polynomial of type $D_3\cong A_3$, whose exponents are $1,2,3$. Passing to an edge quotient divides by the length polynomial $1+q$ of the corresponding $A_1$ Weyl subgroup. At a $U(3)$ vertex the Weyl subgroup is $A_2$, giving $Q$; at the rank-two vertex the Weyl subgroup is $A_1\times A_1$, giving $R$. The displayed identity is direct polynomial expansion.
\end{proof}

\begin{remark}[What is and is not proved in dimension six]
The chamber, stabilizers, homogeneous projections, and Bruhat polynomials are determined. The coefficients of the correction polynomial are not identified with boundary ranks, because this would require a single cellular model in which all face projections are compatible and whose bar realization is verified against $S^{14}$. The $n=6$ calculation therefore provides a finite test case for a general Weyl-chamber construction, while leaving the chain-level compatibility problem open.
\end{remark}

\section{Relation with canonical and formal styles}
\label{sec:styles}

At the canonical commutative layer, the dictionary is:
\begin{center}
\begin{tabular}{>{\raggedright\arraybackslash}p{0.31\textwidth} >{\raggedright\arraybackslash}p{0.59\textwidth}}
\toprule
Morphological notation & Script/topological interpretation \\
\midrule
$a_0R^n+\cdots+a_n$ & cell-count polynomial $\Mpol_S(t)$ \\
addition & direct sum or disjoint union \\
multiplication as product & tensor product of reduced chain complexes \\
$R^k\sim2R^k+R^{k-1}$ & unit elementary pair $E_k=T_k(1)$ \\
integer-labelled pair & $T_k(d)$; $d>1$ creates torsion \\
substitution $R=-1$ & quotient by elementary pairs, giving Euler characteristic \\
``same quantity'' & same graded cell ranks, possibly different differentials \\
bundle over $S^1$ & mapping torus, corrected by $1-f_*$ on homology \\
$X/G$ & action-certified quotient $(X,G,\rho)$; not scalar inversion \\
open radial cell $R_+$ & Borel--Moore/relative degree-one generator in the selected noncompact models \\
sphere addition formula & join, not ordinary Cartesian multiplication \\
cohomogeneity-one partition & double mapping cylinder / mapping cone of the endpoint projections \\
\bottomrule
\end{tabular}
\end{center}

The formal style retains order and parentheses and cannot be recovered from a commutative polynomial. It would require a free non-symmetric monoidal or operadic syntax followed by realization into scripts.

\section{Scope, limitations, and further directions}
\label{sec:limits}

The preceding constructions address four parts of Sommen's language: finite cell inventories, finite attachment diagrams, selected noncompact conical models, and action-certified quotients. None of these makes the reduced script complex a complete invariant of a space. For example,
\[
 C_*(\mathbb{CP}^2;\mathbb Z)\cong C_*(S^2\vee S^4;\mathbb Z)
 \cong \mathbb Z[0]\oplus\mathbb Z[2]\oplus\mathbb Z[4]
\]
with zero differential, whereas the spaces are not homotopy equivalent: the square of the degree-two cohomology class is nonzero for $\mathbb{CP}^2$ and zero for the wedge. This example shows that cell counts and differentials do not retain nonlinear, homotopy-coherent multiplicative structure.

\subsection*{Further extensions}

\paragraph{Refinement.}
\Cref{thm:generalrefinement} gives a general theorem at the chain level: a quasi-isomorphic refinement has a unit-only mapping-cone defect, and a degreewise split injective one has a unit-only relative defect. What remains open is a geometric criterion, expressed in the language of tight scripts and geomaps, guaranteeing that a given refinement is a quasi-isomorphism and is degreewise primitive (equivalently split in the present finite free setting). The published notion of refinement gives injectivity but does not by itself settle this question \cite{CerejeirasKahlerLegatiuk2025}.

\paragraph{Higher bivectors.}
For $n=6$, \Cref{sec:bivector6} determines the spherical-triangle chamber, its face stabilizers, and a Bruhat face inventory with a nonnegative $(1+t)$ correction. Compatibility of the face projections at chain level remains unresolved. A general $n\ge6$ theory should construct a cellular or script-level homotopy-colimit model for the face poset of the type-$B$ or type-$D$ spherical chamber.

\paragraph{Noncompact strata.}
\Cref{sec:borelmoore} gives Borel--Moore models for the line, punctured Euclidean space, and Sommen's conical null-cone bill. A general theory would require a category of locally compact stratified scripts with proper maps, excision for closed/open decompositions, products, and a controlled realization of complements. Such a theory is not developed here.

\subsection*{Open problem: homotopy-coherent structure}

A further limitation concerns classification. Ordinary chain complexes do not retain cup products, and ordinary cohomology rings do not retain higher cochain operations. This suggests considering an $E_\infty$ enhancement of script cochains. Mandell proved that finite-type nilpotent spaces are weakly equivalent if and only if their integral singular cochains are quasi-isomorphic as $E_\infty$ algebras \cite{Mandell2006}. In the present setting, this theorem gives a comparison target for any proposed homotopy-complete enhancement of script cochains.

\begin{quote}
\textbf{Open problem.} Construct, for an appropriate category of geometric scripts, a functorial $E_\infty$ cochain object $\mathcal E(S)$ such that:
\begin{enumerate}[label=\textup{(\roman*)}]
 \item its underlying cochain complex is quasi-isomorphic to $\operatorname{Hom}(C(S),\mathbb Z)$;
 \item refinement geomaps that preserve realization induce $E_\infty$ quasi-isomorphisms;
 \item the construction is compatible, up to coherent equivalence, with the finite homotopy-colimit diagrams used in this paper; and
 \item for realizable finite-type nilpotent scripts, $\mathcal E(S)$ agrees with singular $E_\infty$ cochains of the realization.
\end{enumerate}
Determine the weakest geometric hypotheses under which such an enhancement exists and whether it is conservative on homotopy types.
\end{quote}

The problem points to coherent diagonals and higher attaching operations as data not represented by the present chain complexes. We leave their construction to future work.

\section{Conclusion}

Finite morphological quantities can be interpreted as cell-count polynomials of script complexes. Their excess over homology is a sum of adjacent-dimensional pairs; over \(\mathbb Z\), Smith labels distinguish removable unit pairs from torsion-bearing attachments. Cartesian products, bundles, and gluings must be separated: tensor products model the first, monodromy corrects the second, and finite homotopy-colimit diagrams retain the attachment maps in the third.

The four- and five-dimensional bivector calculations exhibit two distinct sources of apparent overcounting. For \(\Lambda^2\mathbb R^4\), the rank-two locus is a regular midpoint of the join \(S^2*S^2\), and isolating it creates subdivision overhead. For \(\Lambda^2\mathbb R^5\), the rank-two locus is a singular endpoint of a cohomogeneity-one action; the global model used here is the double mapping cylinder joining \(\widetilde G_2(\mathbb R^5)\), \(SO(5)/T^2\), and \(\mathbb{CP}^3\). The six-dimensional chamber calculation gives a finite higher-rank test case, and the Borel--Moore examples recover Sommen's selected open radial and conical terms after support conditions are specified. Division is defined when an action or homogeneous-quotient certificate is supplied. The remaining limitation is the absence of homotopy-coherent multiplicative structure, formulated above as an open problem.

\paragraph{Funding.}
This work was co-funded by the Czech Science Foundation (GA\v{C}R), Grant No.~25-16847S, and by the University of Ostrava, Grant No.~SGS05/P\v{R}F/2026.

\end{document}